\newcommand{\essinf}{\mathop{\mathrm{essinf}}}
\numberwithin{equation}{section}   
\title{\bf Stochastic Recursive Optimal Control Problem with Mixed Delay under Viscosity Solution's Framework
  \thanks{This work is financially supported by the National Key R\&D Program of China (2018YFB1305400), and the National Natural Science Foundations of China (11971266, 11571205, 11831010).}}
\author{\normalsize Weijun Meng\thanks{\it School of Mathematics, Shandong University, Jinan 250100, P.R. China, E-mail: 201611337@mail.sdu.edu.cn}, Jingtao Shi\thanks{\it Corresponding author, School of Mathematics, Shandong University, Jinan 250100, P.R. China, E-mail: shijingtao@sdu.edu.cn}}
\newtheorem{mypro}{Proposition}[section]
\newtheorem{mythm}{Theorem}[section]
\newtheorem{mydef}{Definition}[section]
\newtheorem{mylem}{Lemma}[section]
\newtheorem{Corollary}{Corollary}[section]
\begin{document}

\maketitle \noindent{\bf Abstract:}\quad This paper is concerned with the stochastic recursive optimal control problem with mixed delay. The connection between Pontryagin's maximum principle and Bellman's dynamic programming principle is discussed. Without containing any derivatives of the value function, relations among the adjoint processes and the value function are investigated by employing the notions of super- and sub-jets introduced in defining the viscosity solutions. Stochastic verification theorem is also given to
verify whether a given admissible control is really optimal.

\vspace{2mm}

\noindent{\bf Keywords:}\quad Stochastic recursive optimal control, mixed delay, maximum principle, dynamic programming principle, viscosity solution

\vspace{2mm}

\noindent{\bf Mathematics Subject Classification:}\quad 93E20, 60H10, 34K50, 91G80

\section{Introduction}

The stochastic optimal control problem has been an important research topic recently. Many papers have been published in the past few years, for example \cite{Kus72,Bis78,Ben81,Peng90,Zhou91,Peng93,Xu95,Wu98,YZ99,SW06,Yong10,Wu13,Hu17,HJX18}. However, in the above literatures, controlled dynamic systems only depend on the current state. In fact, the development of many dynamic systems not only depends on the state of the current time, but also depends on their previous history. Generally, stochastic functional differential equations (SFDEs) are used to describe these systems. The detailed study about SFDEs can be referred to Mohammed \cite{Moh84,Moh98}. A class of special SFDEs, called stochastic delayed differential equations (SDDEs), are usually studied recently. Due to its wide applications in engineering, life science and finance (see \cite{OS2000,GM06,AHMP07,KSW07,Fed11}, etc.), the SDDE has become a hot issue in modern research.

As the classic stochastic optimal control problems, the dynamic programming principle can be used to study the stochastic control problems with delay. However, because the system with delay is essentially defined in infinite dimensional space, this kind of research method will be very difficult to implement. Generally, people use three methods to apply the dynamic programming principle to the delayed systems. One approach is to simplify the primitive infinite dimensional system to a finite dimensional one. For example, Larssen \cite{Lar02}, Larssen and Risebro \cite{LR03}) studied the optimal control problem of SFDEs with bounded memory, and proved that the dynamic programming principle is still valid under this framework. They also proved that the value function is a smooth solution to the HJB equation. Another approach is to convert the equation with delay into that without delay. Bauer and Rieder \cite{BR05} studied a control system for SDDE in which the average of current and past values in a sense affects the development of the state. Through a special transformation, the infinite dimensional problem with delay can be transformed into a finite dimensional control problem without delay, and then the value function of the original problem is obtained by solving the simplified problem. The third method is to introduce the infinite generators. Chen and Wu \cite{CW12} studied a class of recursive optimal control problems for delayed systems characterized by SFDEs, and proved that the value function still satisfies the dynamic programming principle. Furthermore, by introducing the weak infinitesimal generators, the joint quadratic variation is used to obtain an infinite dimensional HJB equation, and it is further proved that the value function is the viscosity solution to this infinite dimensional partial differential equation.

Another way to solve the stochastic optimal control problems with delay is the maximum principle. A pioneer work by Hu and peng \cite{HP96}, obtained the maximum principle of a functional type stochastic system by introducing adjoint equations under the condition that the control domain is convex.  At present, the research on this topic can be divided into two directions. One direction involves three coupled adjoint equations, usually two backward stochastic differential equations (BSDEs) and a backward ordinary differential equation (ODE). For example, \O ksendal and Sulem \cite{OS2000} studied a class of optimal control problems in which the wealth equation is a stochastic differential equation with mixed delay (SMDDE). In their model, the current value, the value at one time in the past, and the average of the past value in some sense all affect the increase in wealth at current time. Under this framework, they obtained the maximum principle and applied the results to the relevant problems in finance. In the above paper, one of the main assumptions is that the third adjoint equation exists with zero solution to the backward ODE. In fact, this assumption basically reduces the infinite dimension control problem to the finite dimension problem. Therefore, the optimality conditions obtained in this direction may only be maintained when the delayed system is essentially finite dimensional. In the second direction, the adjoint equation is given by the anticipated backward stochastic differential equation (ABSDE), whose general theory is established by Peng and Yang \cite{PY09}. For example, Chen and Wu \cite{CW10} considered a delayed control system, which considered that the development of the current state only depends on a limited number of points in the past, but both the state variable and the control variable contain delay terms, and the control and its delay terms can enter the diffusion term. Under the assumption that the control domain is a convex set, they obtained the maximum principle for the first time by using ABSDE. Subsequently, Chen and Wu \cite{CW09} studied a stochastic recursive optimal control problem with time delay, the maximum principle is obtained under the condition that the control domain is non-convex and the diffusion term contains no control variable.

As two important tools to study stochastic optimal control problems with delay, there should exist some internal relations between the dynamic programming principle and maximum principle. However, so far, the relevant references are scarce. To the best of our knowledge, Shi \cite{Shi11} first investigated the relationship between maximum principle and dynamic programming principle for one kind of stochastic control systems with mixed delay, under the assumption that the value function is smooth. Shi et al. \cite{SXZ15} studied the connection between adjoint variables and the value function for stochastic recursive optimal control problem with mixed delay, but they still supposed that the value function is smooth. For recent literatures about the relationship between maximum principle and dynamic programming principle for stochastic control problems, refer to \cite{Zhou90,Zhou91,YZ99,Shi10,SY13,NSW16,NSW17,HJX18-2,Shi18,MS18,Shi19} and the references therein.

In this paper, we fill in the gap of the results in \cite{SXZ15}, under the viscosity solution framework, without the illusive assumption that the value function is smooth. For detailed introduction to viscosity solutions to partial differential equations, refer to Crandall et al. \cite{CIL92}, Yong and Zhou \cite{YZ99}. For the relationship between maximum principle and dynamic programming principle with in the framework of viscosity solutions, refer to \cite{Zhou90,Zhou91,YZ99,NSW17,HJX18-2,MS18,Shi19}. The contribution and innovation of this paper can be summarized as follows. For the convenience of presentation, first we display a prior estimate of the solution to the SMDDE. We also give a comparison theorem of SMDDE. Next, the connection between the adjoint variables and the value function for stochastic recursive optimal control problem with mixed delay under the viscosity solution framework, was given, which is consistent with the early results in Theorem 3.2 of \cite{SXZ15} when the value function is smooth. Finally the verification theorem is obtained which can help to verify if an admissible control is the optimal control.

The rest of this paper is organized as follows. In Section 2, some preliminary results about SMDDEs and ABSDEs are introduced. Next in Section 3, the problem is formulated and the sufficient condition of the maximum principle and the HJB equation are displayed. Then the main theorem is given in Section 4. The connection obtained in Section 4 can help us to seek the optimal control, thus in Section 5 the verification theorem is proved to verify if an admissible control is really optimal. Finally we give some useful concluding remarks in Section 6.

\section{Preliminary Results}

In this section, we give some preliminary results about SMDDEs and BSDEs.

Let $T>0$ be fixed, suppose $(\Omega,\mathcal{F},\mathbb{P})$ is a complete probability space. For $s\in[0,T)$, we define the filtration $\{\mathcal{F}_t^s\}_{t\geq s}=\sigma\{W(r)-W(s);s \leq r\leq t\}$ where $\{W(t)\}_{t\geq 0}$ is a $d$-dimensional Brownian motion, $\mathbb{E}[\cdot]$ denotes the expectation under the probability measure $\mathbb{P}$, and $\mathbb{E}^{\mathcal{F}_t^s}[\cdot]\equiv\mathbb{E}[\cdot|\mathcal{F}_t^s]$ denotes the conditional expectation. If $s=0$, we write $\mathcal{F}_t^s=\mathcal{F}_t$.

We first introduce the following spaces which will be used later. For $s\in[0,T)$ and an integer $p$, we define
\begin{eqnarray*}\begin{aligned}
                            C([s,T];\mathbf{R}^n)&:=\Big\{\mathbf{R}^n\mbox{-valued continuous funciton }\phi(t);\ \sup\limits_{s\leq t\leq T}|\phi(t)|<\infty\Big\},\\
L^p(\Omega,\mathcal{F}_T,\mathbb{P};\mathbf{R}^n)&:=\Big\{\mathbf{R}^n\mbox{-valued }\mathcal{F}_T\mbox{-measurable random variable }\xi;\ \mathbb{E}[|\xi|^p]<\infty\Big\},\\
          L^{2,p}_\mathcal{F}([s,T];\mathbf{R}^n)&:=\Big\{\mathbf{R}^n\mbox{-valued }\mathcal{F}_t^s\mbox{-adapted process }\phi(t);\
                                                  \mathbb{E}^{\mathcal{F}_t^s}\Big[\Big(\int_s^T|\phi(t)|^2dt\Big)^{\frac{p}{2}}\Big]<\infty\Big\},\\
              S^p_\mathcal{F}([s,T];\mathbf{R}^n)&:=\Big\{\mathbf{R}^n\mbox{-valued }\mathcal{F}_t^s\mbox{-adapted process }\phi(t);\ \mathbb{E}^{\mathcal{F}_t^s}\Big[\sup\limits_{s\leq t\leq T}|\phi(t)|^p\Big]<\infty\Big\}.
\end{aligned}\end{eqnarray*}

Let $\delta>0$ be fixed and $\lambda\in\mathbf{R}$ be a constant. For any $(s,\varphi)\in[0,T)\times C([-\delta,0];\mathbf{R}^n)$, we consider the following SMDDE:
\begin{eqnarray}\label{eq1}\left\{\begin{aligned}
 dX(t)&=b(t,X(t),X_1(t),X_2(t))dt+\sigma(t,X(t),X_1(t),X_2(t))dW(t),\ t\in[s,T],\\
  X(t)&=\varphi(t-s),\ t\in[s-\delta,s],
\end{aligned}\right.\end{eqnarray}
where
\begin{equation}\label{eq2}
  X_1(t):=\int_{-\delta}^0 e^{\lambda\tau}X(t+\tau)d\tau,\ X_2(t):=X(t-\delta),
\end{equation}
and $b:[s,T]\times\textbf{R}^n\times\textbf{R}^n\times\textbf{R}^n\rightarrow\textbf{R}^n$, $\sigma:[s,T]\times\textbf{R}^n\times\textbf{R}^n\times\textbf{R}^n\rightarrow\textbf{R}^{n\times d}$.

We impose the following assumptions on the coefficients of (\ref{eq1}).

\textbf{(A1)}
 (i) The functions $b,\sigma$ are globally Lipschitz with respect to $(x,x_1,x_2)$.

 (ii) There exists a constant $C$ such that for $\phi=b,\sigma$, the following holds:
\begin{equation*}
       |\phi(t,x,x_1,x_2)|\leq C(1+|x|+|x_1|+|x_2|),\ \forall x,x_1,x_2\in\mathbf{R}^n\times\mathbf{R}^n\times\mathbf{R}^n,\ t\geq 0.
\end{equation*}

 (iii) The functions $b,\sigma$ are measurable and $\varphi\in C([-\delta,0];\textbf{R}^n)$.

The following classical result can be found in Mohammed \cite{Moh98}.
\begin{mylem}\label{lem2.1}
  Under \textbf{(A1)}, the SMDDE (\ref{eq1}) admits a unique solution $X(\cdot)\in\mathcal{S}_{\mathcal{F}}^2([s,T];\mathbf{R}^n)$.
\end{mylem}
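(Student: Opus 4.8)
The plan is to prove existence and uniqueness by a Picard iteration combined with a Gronwall argument, treating the distributed-delay term $X_1$ and the pointwise-delay term $X_2$ as lower-order perturbations controlled by the running supremum of the state. First I would set up the iteration: put $X^0(t):=\varphi(0)$ for $t\in[s,T]$ and $X^0(t):=\varphi(t-s)$ for $t\in[s-\delta,s]$, and define recursively, for $k\ge 0$,
\[
X^{k+1}(t)=\varphi(0)+\int_s^t b\big(r,X^k(r),X_1^k(r),X_2^k(r)\big)\,dr+\int_s^t \sigma\big(r,X^k(r),X_1^k(r),X_2^k(r)\big)\,dW(r),\quad t\in[s,T],
\]
with $X^{k+1}(t):=\varphi(t-s)$ on $[s-\delta,s]$, where $X_1^k,X_2^k$ are built from $X^k$ as in (\ref{eq2}). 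Each $X^{k+1}$ is $\mathcal{F}_t^s$-adapted with continuous paths, and an induction based on the linear growth bound \textbf{(A1)}(ii), the Burkholder--Davis--Gundy (BDG) inequality for the stochastic integral, and Gronwall's lemma shows that $\sup_k\mathbb{E}\big[\sup_{s-\delta\le t\le T}|X^k(t)|^2\big]<\infty$ (the datum $\varphi$ being deterministic and continuous, hence bounded); in particular every $X^k\in\mathcal{S}_{\mathcal{F}}^2([s,T];\mathbf{R}^n)$.

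The key step is the contraction-type estimate. Since $X^{k+1}$ and $X^k$ both coincide with $\varphi$ on $[s-\delta,s]$, for $t\in[s,T]$ one has
\[
|X_1^{k+1}(t)-X_1^k(t)|^2\le C\int_{-\delta}^0\big|X^{k+1}(t+\tau)-X^k(t+\tau)\big|^2\,d\tau\le C\int_s^t\sup_{s\le u\le r}\big|X^{k+1}(u)-X^k(u)\big|^2\,dr,
\]
and similarly $|X_2^{k+1}(t)-X_2^k(t)|^2\le\sup_{s\le u\le t}|X^{k+1}(u)-X^k(u)|^2$, both right-hand sides vanishing on the part of the delay window lying in $[s-\delta,s]$. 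Plugging these into the difference of the iterates and using the Lipschitz assumption \textbf{(A1)}(i), the BDG inequality for the martingale part and Cauchy--Schwarz for the drift part, I expect, with $\psi_k(t):=\mathbb{E}\big[\sup_{s\le u\le t}|X^{k+1}(u)-X^k(u)|^2\big]$, an inequality $\psi_k(t)\le C\int_s^t\psi_{k-1}(r)\,dr$ on $[s,T]$. Iterating yields $\psi_k(T)\le\frac{\big(C(T-s)\big)^k}{k!}\psi_0(T)$, so $\sum_k\psi_k(T)^{1/2}<\infty$ and $\{X^k\}$ is Cauchy in the complete space $\mathcal{S}_{\mathcal{F}}^2([s,T];\mathbf{R}^n)$; its limit $X$ is adapted with continuous paths. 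Passing to the limit in the recursion (using continuity/Lipschitz continuity of $b,\sigma$ together with BDG to pass the stochastic integral to the limit, and the delay estimates above to handle $X_1^k,X_2^k$) shows that $X$ solves (\ref{eq1}), and by construction $X\in\mathcal{S}_{\mathcal{F}}^2([s,T];\mathbf{R}^n)$.

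Uniqueness follows from the same chain of estimates: if $X,\widetilde X\in\mathcal{S}_{\mathcal{F}}^2([s,T];\mathbf{R}^n)$ both solve (\ref{eq1}), then Lipschitz continuity, the bounds on the delay terms, BDG and Cauchy--Schwarz give $\mathbb{E}\big[\sup_{s\le u\le t}|X(u)-\widetilde X(u)|^2\big]\le C\int_s^t\mathbb{E}\big[\sup_{s\le u\le r}|X(u)-\widetilde X(u)|^2\big]\,dr$ on $[s,T]$, whence $X=\widetilde X$ by Gronwall. The point that needs the most care is the bookkeeping for the two delay terms — showing the distributed delay $X_1$ is dominated by a \emph{time integral} (not merely the supremum) of the increment so that Gronwall closes with a clean constant, and checking that the contributions from $[s-\delta,s]$, where all candidate processes equal the fixed datum $\varphi$, drop out; the remaining manipulations are routine. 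Alternatively, the statement is a special case of the general existence and uniqueness theory for stochastic functional differential equations in Mohammed \cite{Moh98}.
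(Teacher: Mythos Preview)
Your proposal is correct. The paper itself does not give a proof of this lemma at all: it simply states that the result ``can be found in Mohammed \cite{Moh98}'' and moves on. What you have written is therefore not an alternative to the paper's argument but a self-contained proof of the cited fact, carried out by the standard Picard iteration/Gronwall scheme that underlies the general SFDE theory in \cite{Moh98}. The only points worth polishing are notational: in the contraction step you compare $X^{k+1}-X^k$ in the delay bounds but then use these to control $\psi_k$ via $\psi_{k-1}$, so the indices in the displayed inequalities for $X_1$ and $X_2$ should read $X^k-X^{k-1}$; and for the $X_2$ term it is cleaner to integrate first (change variables $u=r-\delta$) rather than bound pointwise by the running supremum, though either route closes the Gronwall loop. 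Your closing remark that the statement is a special case of \cite{Moh98} is exactly how the paper handles it.
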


In the following an estimate of the solution to the SMDDE is given.
\begin{mylem}\label{lem2.2}
Suppose (\textbf{A1}) hold. Then for $p\geq 2$, the solution to SMDDE (\ref{eq1}) satisfies the following estimate:
\begin{equation}\begin{aligned}\label{eq3}
  \mathbb{E}^{\mathcal{F}_s^s}\bigg[\sup\limits_{s\leq t \leq T}|X(t)|^p\bigg]&\leq C(T,D,p,\delta,\lambda)\bigg[\sup\limits_{-\delta\leq r \leq 0}|\varphi(r)|^p+\bigg(\int_s^T|b(r,0,0,0)|dr\bigg)^p\\
                                                                              &\qquad+\bigg(\int_s^T|\sigma(r,0,0,0)|^2dr\bigg)^{\frac{p}{2}}\bigg],\ \mathbb{P}\mbox{-}a.s.,
\end{aligned}\end{equation}
where $D$ is the Lipschitz constant, and $C(T,D,p,\delta,\lambda)$ is a constant related to $T,D,p,\delta,\lambda$.
\end{mylem}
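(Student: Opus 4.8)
The plan is to derive the $L^p$ estimate \eqref{eq3} by the standard combination of Burkholder--Davis--Gundy (BDG) inequality, the linear growth condition \textbf{(A1)}(ii), and a Gronwall-type argument adapted to the memory structure. First I would fix $p\geq 2$ and $t\in[s,T]$, apply the integral form of \eqref{eq1}, and estimate $\sup_{s\leq r\leq t}|X(r)|^p$ by splitting into the initial-segment contribution, the drift term, and the stochastic integral term. For the drift I would use H\"older's inequality in time; for the diffusion I would invoke the conditional BDG inequality to bound $\mathbb{E}^{\mathcal{F}_s^s}[\sup_{s\leq r\leq t}|\int_s^r\sigma\,dW|^p]$ by $C_p\,\mathbb{E}^{\mathcal{F}_s^s}[(\int_s^t|\sigma|^2\,du)^{p/2}]$. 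Writing $\phi(u,X(u),X_1(u),X_2(u)) = [\phi(u,X(u),X_1(u),X_2(u))-\phi(u,0,0,0)] + \phi(u,0,0,0)$ for $\phi=b,\sigma$ and using the global Lipschitz property turns these into terms controlled by $\int_s^t(|X(u)|^p + |X_1(u)|^p + |X_2(u)|^p)\,du$ plus the explicit source terms $(\int_s^T|b(r,0,0,0)|\,dr)^p$ and $(\int_s^T|\sigma(r,0,0,0)|^2\,dr)^{p/2}$ that appear on the right-hand side of \eqref{eq3}.

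The next step is to absorb the two delay terms $X_1$ and $X_2$ back into $\sup_{s-\delta\leq r\leq u}|X(r)|^p$. For $X_2(u)=X(u-\delta)$ this is immediate: $|X_2(u)|^p\leq \sup_{s-\delta\leq r\leq u}|X(r)|^p$, and the portion of the supremum lying in $[s-\delta,s]$ is exactly $\sup_{-\delta\leq r\leq 0}|\varphi(r)|^p$. For $X_1(u)=\int_{-\delta}^0 e^{\lambda\tau}X(u+\tau)\,d\tau$, I would use H\"older (or Jensen) on the finite measure $e^{\lambda\tau}\,d\tau$ on $[-\delta,0]$ to get $|X_1(u)|^p\leq C(\delta,\lambda)\int_{-\delta}^0 e^{\lambda\tau}|X(u+\tau)|^p\,d\tau \leq C(\delta,\lambda)\,\sup_{s-\delta\leq r\leq u}|X(r)|^p$, where $C(\delta,\lambda)$ depends only on $\delta$ and $\lambda$ through $\int_{-\delta}^0 e^{\lambda\tau}\,d\tau$ and its $(p/\!\!\;(p-1))$-analogue. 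Setting $g(t):=\mathbb{E}^{\mathcal{F}_s^s}[\sup_{s-\delta\leq r\leq t}|X(r)|^p]$, the accumulated estimate takes the form $g(t)\leq A + B\int_s^t g(u)\,du$ for $t\in[s,T]$, where $A$ collects $\sup_{-\delta\leq r\leq 0}|\varphi(r)|^p$ together with the two explicit integral source terms (times constants depending on $T,D,p,\delta,\lambda$ and the BDG constant), and $B=B(T,D,p,\delta,\lambda)$.

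Finally I would apply Gronwall's lemma to $g$ on $[s,T]$ to obtain $g(T)\leq A\,e^{B(T-s)}\leq A\,e^{BT}$, which is precisely the claimed bound \eqref{eq3} with $C(T,D,p,\delta,\lambda):=(\text{const})\cdot e^{BT}$; since $\sup_{s\leq t\leq T}|X(t)|^p\leq g(T)$, the estimate for the supremum over $[s,T]$ follows a fortiori. One technical point worth care is that all expectations are conditional on $\mathcal{F}_s^s$, so I would either note that $\mathcal{F}_s^s$ is trivial (indeed $\{\mathcal{F}_t^s\}$ is generated by increments of $W$ after time $s$, so $\mathcal{F}_s^s$ contains only null sets and their complements) and work with ordinary expectations, or verify that the BDG inequality and Gronwall's lemma go through verbatim in the conditional setting; the conclusion then holds $\mathbb{P}$-a.s. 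The main obstacle is purely bookkeeping: ensuring that every constant that is generated --- the BDG constant $C_p$, the H\"older constant for the $e^{\lambda\tau}$ weight, and the Gronwall exponential --- is tracked so that the final constant genuinely depends only on $T,D,p,\delta,\lambda$ and on no other data of the problem, and making sure the finiteness $g(t)<\infty$ needed to run Gronwall is guaranteed a priori by Lemma \ref{lem2.1} (for $p=2$) together with a standard localization/truncation argument for general $p\geq 2$.
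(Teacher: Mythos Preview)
Your proposal is correct and follows essentially the same approach as the paper: BDG for the stochastic integral, the Lipschitz bound to separate out the source terms $\big(\int_s^T|b(r,0,0,0)|\,dr\big)^p$ and $\big(\int_s^T|\sigma(r,0,0,0)|^2\,dr\big)^{p/2}$, reduction of the delay terms to quantities controlled by the initial segment and by $X$ itself, and a Gronwall argument. The only cosmetic difference is that the paper handles $\int_s^T|X_1(r)|^p\,dr$ and $\int_s^T|X_2(r)|^p\,dr$ by an explicit change of variables/Fubini computation (splitting off the $[s-\delta,s]$ piece), whereas you bound $|X_i(u)|^p$ pointwise by $\sup_{s-\delta\leq r\leq u}|X(r)|^p$ and run Gronwall on $g(t)=\mathbb{E}^{\mathcal{F}_s^s}\big[\sup_{s-\delta\leq r\leq t}|X(r)|^p\big]$; both routes lead to the same inequality.
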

\begin{proof}
By \textbf{(A1)} and the Burkholder-Davis-Gundy inequality, we have
\begin{equation*}\begin{aligned}
       &\mathbb{E}^{\mathcal{F}_s^s}\bigg[\sup\limits_{s\leq t \leq T}|X(t)|^p\bigg]\\
  \leq &\ C(p)|X(s)|^p+C(p)\mathbb{E}^{\mathcal{F}_s^s}\bigg(\int_s^T|b(r,X(r),X_1(r),X_2(r))|dr\bigg)^p\\
       &+C(p)\mathbb{E}^{\mathcal{F}_s^s}\bigg(\int_s^T|\sigma(r,X(r),X_1(r),X_2(r))|^2dr\bigg)^{\frac{p}{2}}\\
  \leq &\ C(p)|\varphi(0)|^p+C(p,D)\mathbb{E}^{\mathcal{F}_s^s}\bigg[\bigg(\int_s^T\Big(|b(r,0,0,0)|+|X(r)|+|X_1(r)|+|X_2(r)|\Big)dr\bigg)^p\bigg]\\
       &+C(p,D)\mathbb{E}^{\mathcal{F}_s^s}\bigg[\bigg(\int_s^T\Big(|\sigma(r,0,0,0)|+|X(r)|+|X_1(r)|+|X_2(r)|\Big)^2dr\bigg)^{\frac{p}{2}}\bigg]\\
  \leq &\ C(p)|\varphi(0)|^p+C(p,D,T)\mathbb{E}^{\mathcal{F}_s^s}\bigg[\bigg(\int_s^T|b(r,0,0,0)|dr\bigg)^p+\bigg(\int_s^T|\sigma(r,0,0,0)|^2dr\bigg)^{\frac{p}{2}}\\
       &\quad+\int_s^T|X(r)|^pdr+\int_s^T|X_1(r)|^pdr+\int_s^T|X_2(r)|^pdr\bigg].
\end{aligned}\end{equation*}
Noting
\begin{equation*}\begin{aligned}
  &\mathbb{E}^{\mathcal{F}_s^s}\bigg[\int_s^T|X_2(r)|^pdr\bigg]=\mathbb{E}^{\mathcal{F}_s^s}\bigg[\int_s^T|X(r-\delta)|^pdr\bigg]=\mathbb{E}^{\mathcal{F}_s^s}\bigg[\int_{s-\delta}^{T-\delta}|X(r)|^pdr\bigg]\\
  &\leq \delta \bigg[\sup\limits_{-\delta\leq r \leq 0}|\varphi(r)|^p\bigg]+\mathbb{E}^{\mathcal{F}_s^s}\bigg[\int_s^T|X(r)|^pdr\bigg],
\end{aligned}\end{equation*}
and
\begin{equation*}\begin{aligned}
  &\mathbb{E}^{\mathcal{F}_s^s}\bigg[\int_s^T|X_1(r)|^pdr\bigg]=\mathbb{E}^{\mathcal{F}_s^s}\bigg[\int_s^T\bigg|\int_{-\delta}^0e^{\lambda\tau}X(r+\tau)d\tau\bigg|^pdr\bigg]\\
  &\leq \mathbb{E}^{\mathcal{F}_s^s}\bigg[\int_s^T\bigg(\int_{-\delta}^0e^{q\lambda\tau}d\tau\bigg)^{p-1}\int_{-\delta}^0|X(r+\tau)|^pd\tau dr\bigg]\\
  &\leq C(\delta,\lambda,p)\mathbb{E}^{\mathcal{F}_s^s}\bigg[\int_s^T\int_{-\delta}^0|X(r+\tau)|^pd\tau dr\bigg]\\
  &=C(\delta,\lambda,p)\mathbb{E}^{\mathcal{F}_s^s}\bigg[\int_{-\delta}^0\int_{s+\tau}^{T+\tau}|X(u)|^pdud\tau\bigg]\\
\end{aligned}\end{equation*}
\begin{equation*}\begin{aligned}  
  &\leq C(\delta,\lambda,p)\mathbb{E}^{\mathcal{F}_s^s}\bigg[\int_{-\delta}^0\int_{s+\tau}^{s}|X(u)|^pdud\tau \bigg]+C(\delta,\lambda,p)\mathbb{E}^{\mathcal{F}_s^s}\bigg[\int_{-\delta}^0\int_{s}^{T}|X(u)|^pdud\tau \bigg]\\
  &\leq C(\delta,\lambda,p)\int_{s-\delta}^s(u-s+\delta)|\varphi(u-s)|^pdu+C(\delta,\lambda,p)\mathbb{E}^{\mathcal{F}_s^s}\bigg[\int_s^T|X(r)|^pdr\bigg]\\
  &\leq C(\delta,\lambda,p)\Big[\sup\limits_{-\delta\leq r \leq 0}|\varphi(r)|^p\Big]+C(\delta,\lambda,p)\mathbb{E}^{\mathcal{F}_s^s}\bigg[\int_s^T|X(r)|^pdr\bigg],
\end{aligned}\end{equation*}
applying the Gronwall inequality, then we complete the proof of (\ref{eq3}).
\end{proof}

Next we give the comparison theorem of SMDDE.
\begin{mylem} \label{lem2.3}
Suppose $n=1$. For $(s,\varphi)\in[0,T)\times C([-\delta,0];\mathbf{R}^n)$, consider the following two SMDDEs, for $i=1,2$,
\begin{equation*}\left\{\begin{aligned}
  dX^i(t)&=b^i(t,X^i(t),X_2^i(t))dt+\sigma^i(t,X^i(t))dW(t),\ t\in[s,T],\\
   X^i(t)&=\varphi^i(t-s),\ t\in[s-\delta,s],
\end{aligned}\right.\end{equation*}
where $X_2^i(t):=X^i(t-\delta)$. Suppose $b^i,\sigma^i$ satisfy the assumption \textbf{(A1)}, and
\begin{equation*}\begin{aligned}
  &\varphi^1(t-s)\geq\varphi^2(t-s),\ \forall t\in[s-\delta,s],\\
  &b^1(t,X^2(t),X^2_2(t))\geq b^2(t,X^2(t),X^2_2(t)),\ a.e.\ t\in[s,T],\ a.s.\\
  &\sigma^1(t,X^2(t))=\sigma^2(t,X^2(t)),\ a.e.\ t\in[s,T],\ a.s.
\end{aligned}\end{equation*}
In addition, we assume $b^1(t,x,x_2)$ is increasing with respect to $x_2$, then $X^1(t)\geq X^2(t),\ a.e.\ t\in[s,T]$,\ a.s. Furthermore,
\begin{equation*}\begin{aligned}
  X^1(t)=X^2(t)\Longleftrightarrow&\varphi^1(t-s)=\varphi^2(t-s),\ t\in[s-\delta,s],\\ &b^1(t,X^2(t),X^2_2(t))=b^2(t,X^2(t),X^2_2(t)),\ a.e.\ t\in[s,T],\ a.s.
\end{aligned}\end{equation*}
\end{mylem}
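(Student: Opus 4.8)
The plan is to study the difference $\hat{X}(t):=X^1(t)-X^2(t)$, which on $[s-\delta,s]$ equals $\varphi^1(t-s)-\varphi^2(t-s)\ge0$, and to show it stays nonnegative on $[s,T]$ by linearizing its dynamics and running the argument on successive intervals of length $\delta$ (the method of steps). On $[s,T]$,
\begin{equation*}
d\hat{X}(t)=\big[b^1(t,X^1(t),X_2^1(t))-b^2(t,X^2(t),X_2^2(t))\big]dt+\big[\sigma^1(t,X^1(t))-\sigma^2(t,X^2(t))\big]dW(t).
\end{equation*}
First I would insert and subtract $b^1(t,X^2(t),X_2^1(t))$ and $b^1(t,X^2(t),X_2^2(t))$ in the drift and $\sigma^1(t,X^2(t))$ in the diffusion. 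Since $n=1$, the global Lipschitz property in \textbf{(A1)} lets me write $b^1(t,X^1,X_2^1)-b^1(t,X^2,X_2^1)=\alpha(t)\hat{X}(t)$ and $\sigma^1(t,X^1)-\sigma^1(t,X^2)=\beta(t)\hat{X}(t)$ with $|\alpha|,|\beta|\le D$; the term $\sigma^1(t,X^2)-\sigma^2(t,X^2)$ vanishes by hypothesis; the term $\rho(t):=b^1(t,X^2,X_2^2)-b^2(t,X^2,X_2^2)\ge0$ by hypothesis; and the remaining term is $b^1(t,X^2,X_2^1)-b^1(t,X^2,X_2^2)=\gamma(t)\hat{X}(t-\delta)$ with $\gamma$ measurable, $|\gamma|\le D$, and $\gamma(t)\ge0$ whenever $\hat{X}(t-\delta)\ge0$, thanks to the monotonicity of $b^1$ in $x_2$. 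Hence
\begin{equation*}
d\hat{X}(t)=\big[\alpha(t)\hat{X}(t)+\gamma(t)\hat{X}(t-\delta)+\rho(t)\big]dt+\beta(t)\hat{X}(t)\,dW(t).
\end{equation*}

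On $I_0:=[s,s+\delta]$ the delayed argument lies in $[s-\delta,s]$, so $\hat{X}(t-\delta)=\varphi^1(t-\delta-s)-\varphi^2(t-\delta-s)\ge0$; thus $g(t):=\gamma(t)\hat{X}(t-\delta)+\rho(t)$ is a nonnegative adapted process on $I_0$, and $\hat{X}(s)=\varphi^1(0)-\varphi^2(0)\ge0$. I would then solve this linear SDE by variation of constants — equivalently, multiply by the strictly positive integrating factor $\Gamma(t)^{-1}$, where $d\Gamma(t)=\Gamma(t)(\alpha(t)dt+\beta(t)dW(t))$, $\Gamma(s)=1$ — to get
\begin{equation*}
\hat{X}(t)=\Gamma(t)\Big[\hat{X}(s)+\int_s^t\Gamma(r)^{-1}g(r)\,dr\Big]\ge0,\quad t\in I_0
\end{equation*}
(alternatively, apply It\^{o}--Tanaka to $(\hat{X})^-$ and Gronwall). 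Knowing $\hat{X}\ge0$ on $I_0$, on $I_1:=[s+\delta,s+2\delta]$ the delayed argument lies in $I_0$, so again $\hat{X}(t-\delta)\ge0$ and $\hat{X}(s+\delta)\ge0$, and the same representation yields $\hat{X}\ge0$ on $I_1$. Iterating over the finitely many blocks $I_k=[s+k\delta,s+(k+1)\delta]\cap[s,T]$ gives $X^1(t)\ge X^2(t)$ for all $t\in[s,T]$, a.s., path continuity upgrading ``a.e.'' to ``all $t$''.

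For the equality assertion, the direction ``$\Leftarrow$'' I would get from uniqueness: if $\varphi^1=\varphi^2$ on $[s-\delta,s]$ and $b^1(\cdot,X^2,X_2^2)=b^2(\cdot,X^2,X_2^2)$ a.e., then together with $\sigma^1(\cdot,X^2)=\sigma^2(\cdot,X^2)$ the process $X^2$ solves the SMDDE with coefficients $(b^1,\sigma^1)$ and datum $\varphi^1$, so $X^1=X^2$ on $[s,T]$ by Lemma \ref{lem2.1}. For ``$\Rightarrow$'', if $\hat{X}\equiv0$ on $[s,T]$ then in the linearized equation the diffusion part vanishes and $\int_s^t g(r)\,dr\equiv0$, so $g\equiv0$ a.e.; as $\rho\ge0$ and $\gamma(\cdot)\hat{X}(\cdot-\delta)\ge0$, each vanishes a.e. The vanishing of $\rho$ is precisely $b^1(\cdot,X^2,X_2^2)=b^2(\cdot,X^2,X_2^2)$ a.e.; and the vanishing of $\gamma(t)\hat{X}(t-\delta)$, read block by block and combined with the monotonicity of $b^1$ in $x_2$ and path continuity, forces $\hat{X}\equiv0$ on $[s-\delta,s]$, i.e. $\varphi^1=\varphi^2$.

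The main obstacle is the sign of the delay contribution $b^1(t,X^2,X_2^1)-b^1(t,X^2,X_2^2)$: Lipschitz continuity alone controls only its modulus, and it is exactly the monotonicity of $b^1$ in $x_2$, bootstrapped along the method of steps so that on each block $\hat{X}(t-\delta)$ has already been shown nonnegative, that pins its sign down. The secondary delicate point is the ``$\Rightarrow$'' half of the equality statement, namely propagating the identity of the solutions back onto the initial segment $[s-\delta,s]$.
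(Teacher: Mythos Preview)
Your proposal is correct and follows essentially the same route as the paper: linearize the difference $\hat{X}=X^1-X^2$ into the form $d\hat{X}=[\alpha\hat{X}+\gamma\hat{X}(\cdot-\delta)+\hat{b}]dt+\beta\hat{X}\,dW$, then run the method of steps on the blocks $[s+k\delta,s+(k+1)\delta]$, where on each block the delayed term is already known to be nonnegative so the equation reduces to a linear SDE with nonnegative forcing. The paper simply invokes the SDE comparison theorem on each block; you instead write out the variation-of-constants representation with the exponential martingale $\Gamma$, which is an equivalent and slightly more explicit way to see the same nonnegativity. Your treatment of the ``Furthermore'' equality assertion is in fact more detailed than the paper's, which states the equivalence but does not argue it in the proof.
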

\begin{proof}
Denote $\hat{X}:=X^1-X^2$, $\hat{X}_2:=X_2^1-X_2^2$, $\hat{\varphi}:=\varphi^1-\varphi^2$.
Then we have
\begin{equation*}\left\{\begin{aligned}
  d\hat{X}(t)&=[\alpha(t)\hat{X}(t)+\gamma(t)\hat{X}_2(t)+\hat{b}(t)]dt+\alpha'(t)\hat{X}(t)dW(t),\ t\in[s,T],\\
   \hat{X}(t)&=\hat{\varphi}(t-s),\ t\in[s-\delta,s],
\end{aligned}\right.\end{equation*}
where
\begin{equation*}\begin{aligned}
  \alpha(t):&=\frac{b^1(t,X^1(t),X^1_2(t))-b^1(t,X^2(t),X^1_2(t))}{\hat{X}(t)},\\
  \alpha'(t):&=\frac{\sigma^1(t,X^1(t))-\sigma^1(t,X^2(t))}{\hat{X}(t)},\\
  \gamma(t):&=\frac{b^1(t,X^2(t),X^1_2(t))-b^1(t,X^2(t),X^2_2(t))}{\hat{X}_2(t)},\\
  \hat{b}(t):&=b^1(t,X^2(t),X^2_2(t))-b^2(t,X^2(t),X^2_2(t)).
\end{aligned}\end{equation*}
In general the solution to SMDDE can be derived step by step. Let $t\in[s,s+\delta]$, then the above SMDDE becomes
\begin{equation*}\left\{\begin{aligned}
  d\hat{X}(t)&=[\alpha(t)\hat{X}(t)+\gamma(t)\hat{\varphi}(t-\delta-s)+\hat{b}(t)]dt+\alpha'(t)\hat{X}(t)dW(t),\\
   \hat{X}(s)&=\hat{\varphi}(0).
\end{aligned}\right.\end{equation*}
Apparently, it becomes a stochastic differential equation (SDE). Furthermore, by the comparison theorem of SDE, Since $\gamma(t)\geq 0$, $\hat{b}(t)\geq 0$, $\hat{\varphi}(t)\geq 0$, we deduce that $\hat{X}(t)\geq 0.$ Next we consider the case $t\in[s+\delta,s+2\delta]$, we can still get that $\hat{X}(t)\geq 0.$ Repeat the same steps on $[s+2\delta,s+3\delta]$, $[s+3\delta,s+4\delta],\cdots$ Since $T$ is finite, finally we can obtain $\hat{X}(t)\geq 0$ for $t\in[s,T]$.
\end{proof}

Now we consider the following backward stochastic differential equation (BSDE):
\begin{eqnarray}\label{eq4}\left\{\begin{aligned}
  -dY(t)&=f(t,Y(t),Z(t))dt-Z(t)dW(t),\ t\in[0,T],\\
    Y(T)&=\xi,
\end{aligned}\right.\end{eqnarray}
where $f:[0,T]\times\Omega\times\mathbf{R}^n\times\mathbf{R}^{n\times d}\rightarrow\mathbf{R}^n$ satisfies the following assumption.

\textbf{(A2)}: (i) For any $(y,z)\in\mathbf{R}^n\times\mathbf{R}^{n\times d}$, $f(\cdot,y,z)$ is $\mathcal{F}_t$-adapted and $\int_0^T|f(s,0,0)|ds\in L^2(\Omega,\mathcal{F}_T,\mathbb{P};\mathbf{R}^n)$.

(ii) There exists a constant $C>0$ such that
\begin{equation*}
  |f(t,y,z)-f(t,y',z')|\leq C(|y-y'|+|z-z'|),\ \forall y,y'\in\textbf{R}^n,\ z,z'\in\textbf{R}^{n\times d}.
\end{equation*}
The following result is classical, by Pardoux and Peng \cite{PP90}.
\begin{mylem}\label{lem2.4}
Under \textbf{(A2)}, for given $\xi\in L^2(\Omega,\mathcal{F}_T,\mathbb{P};\mathbf{R}^n)$, the BSDE (\ref{eq4}) admits a unique adapted solution pair $(Y(\cdot),Z(\cdot))\in \mathcal{S}_{\mathcal{F}}^2([0,T];\mathbf{R}^n)\times L_{\mathcal{F}}^{2}([0,T];\mathbf{R}^{n\times d})$.
\end{mylem}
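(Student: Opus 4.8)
The plan is to prove Lemma \ref{lem2.4} by a contraction mapping (Picard iteration) argument, which is the classical route of Pardoux and Peng \cite{PP90}. The three ingredients are an a priori estimate, uniqueness via It\^o's formula, and existence via the martingale representation theorem combined with a fixed point in a weighted norm. First I would record the a priori estimate: if $(Y,Z)$ is any solution to \eqref{eq4}, then applying It\^o's formula to $|Y(t)|^2$, using the Lipschitz assumption \textbf{(A2)}(ii) together with Young's inequality to absorb the term $|Z(t)|^2$ coming from the quadratic variation, and then the Burkholder--Davis--Gundy inequality, yields
\[
\mathbb{E}\Big[\sup_{0\le t\le T}|Y(t)|^2\Big]+\mathbb{E}\Big[\int_0^T|Z(t)|^2dt\Big]\le C\,\mathbb{E}\Big[|\xi|^2+\Big(\int_0^T|f(t,0,0)|dt\Big)^2\Big],
\]
which both guarantees $(Y,Z)\in\mathcal{S}_{\mathcal{F}}^2([0,T];\mathbf{R}^n)\times L_{\mathcal{F}}^2([0,T];\mathbf{R}^{n\times d})$ and, applied to a difference of two solutions, gives uniqueness once Gronwall's inequality is invoked.

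For existence, I would set $(Y^0,Z^0)=(0,0)$ and, given $(Y^m,Z^m)$ in the product space above, note that by \textbf{(A2)}(i) and the Lipschitz bound the random variable $\xi+\int_0^Tf(s,Y^m(s),Z^m(s))ds$ lies in $L^2(\Omega,\mathcal{F}_T,\mathbb{P};\mathbf{R}^n)$, so the martingale $t\mapsto\mathbb{E}^{\mathcal{F}_t}[\xi+\int_0^Tf(s,Y^m(s),Z^m(s))ds]$ admits, by the martingale representation theorem, a representation with integrand $Z^{m+1}\in L_{\mathcal{F}}^2$; defining $Y^{m+1}(t):=\mathbb{E}^{\mathcal{F}_t}[\xi+\int_t^Tf(s,Y^m(s),Z^m(s))ds]$ produces a pair $(Y^{m+1},Z^{m+1})$ solving the BSDE whose driver $f(s,Y^m(s),Z^m(s))$ no longer involves the new unknowns.

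The main obstacle, and the crux of the argument, is to show that the map $(Y^m,Z^m)\mapsto(Y^{m+1},Z^{m+1})$ is a contraction. This is where one works not with the natural norm but with the equivalent weighted norm $\|(Y,Z)\|_\beta^2:=\mathbb{E}\big[\int_0^Te^{\beta t}(|Y(t)|^2+|Z(t)|^2)dt\big]$ for a parameter $\beta>0$ to be chosen large. Applying It\^o's formula to $e^{\beta t}|Y^{m+1}(t)-Y^{m}(t)|^2$, the negative term $-\beta\int e^{\beta t}|Y^{m+1}-Y^{m}|^2dt$ produced by differentiating $e^{\beta t}$ is used to dominate the cross terms arising from the Lipschitz estimate on $f$ in the variables $(y,z)$; choosing $\beta$ sufficiently large renders the resulting constant strictly less than $1$ (say $\tfrac12$), so that
\[
\|(Y^{m+1}-Y^{m},Z^{m+1}-Z^{m})\|_\beta^2\le\tfrac12\|(Y^{m}-Y^{m-1},Z^{m}-Z^{m-1})\|_\beta^2.
\]
Consequently $(Y^m,Z^m)$ is a Cauchy sequence in the complete space $(L_{\mathcal{F}}^2([0,T];\mathbf{R}^n)\times L_{\mathcal{F}}^2([0,T];\mathbf{R}^{n\times d}),\|\cdot\|_\beta)$; passing to the limit in the defining identity and invoking the a priori estimate shows the limit $(Y,Z)$ is the desired solution in $\mathcal{S}_{\mathcal{F}}^2([0,T];\mathbf{R}^n)\times L_{\mathcal{F}}^2([0,T];\mathbf{R}^{n\times d})$, while uniqueness has already been obtained above. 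I expect the bookkeeping in the contraction estimate — in particular isolating exactly how large $\beta$ must be taken so that both the $Y$-contribution and the $Z$-contribution are controlled — to be the only delicate point; everything else is a routine application of It\^o's formula, BDG and Gronwall.
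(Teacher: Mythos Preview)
Your proposal is correct and follows precisely the classical Pardoux--Peng contraction argument that the paper is invoking: the paper does not give its own proof of Lemma~\ref{lem2.4} but simply cites \cite{PP90}, so your outline (a priori estimate via It\^o and BDG, martingale representation for the Picard iterates, contraction in the $\|\cdot\|_\beta$ weighted norm) is exactly the intended route.
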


\section{Problem Statement and the Viscosity Solution}

In this section, we state the problem and give some preliminary results.

Let $T>0$ be finite and $U\subset\mathbf{R}^k$ be a nonempty convex set. Given $s\in[0,T)$, we denote $\mathcal{U}^\omega[s,T]$ the set of all 5 tuples $(\Omega,\mathcal{F},\mathbb{P},W(\cdot);u(\cdot))$ satisfying the following conditions:\\
(i)\ $(\Omega,\mathcal{F},\mathbb{P})$ is a complete probability space;\\
(ii)\ ${\{W(t)\}}_{t\geq s}$ is a $d$-dimensional standard Brownian motion defined on $(\Omega,\mathcal{F},\mathbb{P})$ over $[s,T]$ (with $W(s)=0, a.s.$), and $\mathcal{F}^s_t=\sigma{\{W(r);s\leq r\leq t\}}$ augmented by all the $\mathbb{P}$-null sets in $\mathcal{F}$;\\
(iii)\ $u:[s,T]\times\Omega\rightarrow U$ is an $\{\mathcal{F}^s_t\}_{t\geq s}$-adapted process on $(\Omega,\mathcal{F},\mathbb{P})$.

We write $(\Omega,\mathcal{F},\mathbb{P},W(\cdot);u(\cdot))\in\mathcal{U}^\omega[s,T]$, but if there is no confusion, occasionally we will simply write $u(\cdot)\in \mathcal{U}^\omega[s,T]$.

For simplicity of representation, in the following of this paper we only consider the one dimension case. Let $\delta>0$ be fixed, for any $(s,\varphi)\in[0,T)\times C([-\delta,0];\mathbf{R})$, we consider the triple $(X^{s,\varphi,u}(\cdot),Y^{s,\varphi,u}(\cdot),Z^{s,\varphi,u}(\cdot))\in\mathbf{R}\times\mathbf{R}\times\mathbf{R}$ given by the following controlled coupled forward-backward stochastic differential equation with mixed delay (FBSMDDE):
\begin{eqnarray}\label{eq2.1}\left\{\begin{aligned}
 dX^{s,\varphi;u}(t)&=b(t,X^{s,\varphi;u}(t),X_1^{s,\varphi;u}(t),X_2^{s,\varphi;u}(t),u(t))dt\\
                    &\quad+\sigma(t,X^{s,\varphi;u}(t),X_1^{s,\varphi;u}(t),X_2^{s,\varphi;u}(t),u(t))dW(t),\ t\in[s,T],\\
-dY^{s,\varphi;u}(t)&=f(t,X^{s,\varphi;u}(t),X_1^{s,\varphi;u}(t),X_2^{s,\varphi;u}(t),Y^{s,\varphi;u}(t),Z^{s,\varphi;u}(t),u(t))dt\\
                    &\quad-Z^{s,\varphi;u}(t)dW(t),\ t\in[s,T],\\
  X^{s,\varphi;u}(t)&=\varphi(t-s),\ t\in[s-\delta,s],\\
  Y^{s,\varphi;u}(T)&=\phi(X^{s,\varphi;u}(T),X_1^{s,\varphi;u}(T)),
\end{aligned}\right.\end{eqnarray}
here
\begin{equation}\label{eq2.2}
  X_1^{s,\varphi;u}(t):=\int_{-\delta}^0 e^{\lambda\tau}X^{s,\varphi;u}(t+\tau)d\tau,\ X_2^{s,\varphi;u}(t):=X^{s,\varphi;u}(t-\delta),
\end{equation}
and $b:[s,T]\times\mathbf{R}\times\mathbf{R}\times\mathbf{R}\times\textbf{U}\rightarrow\mathbf{R},\ \sigma:[s,T]\times\mathbf{R}\times\mathbf{R}\times\mathbf{R}\times\textbf{U}\rightarrow\mathbf{R},\ f:[s,T]\times\mathbf{R}\times\mathbf{R}\times\mathbf{R}\times\mathbf{R}\times\mathbf{R}\times\textbf{U}\rightarrow\mathbf{R}$ and $\phi:\mathbf{R}\times\mathbf{R}\rightarrow\mathbf{R}$.

The cost functional is introduced as follows:
\begin{equation}\label{eq2.3}
  J(s,\varphi;u(\cdot)):=-Y^{s,\varphi;u}(s),\quad(s,\varphi)\in[0,T]\times C([-\delta,0];\mathbf{R}),
\end{equation}
and our stochastic recursive optimal control problem is the following.

\textbf{Problem\ (P).} For given $(s,\varphi)\in[0,T]\times C([-\delta,0];\mathbf{R})$, our object is to find $u^*(\cdot)\in\mathcal{U}^\omega[s,T]$, such that (\ref{eq2.1}) admits a unique solution and
\begin{equation}\label{eq2.4}
  J(s,\varphi;u^*(\cdot))=\underset{u(\cdot)\in\mathcal{U}^\omega[s,T]}{\essinf}J(s,\varphi;u(\cdot)).
\end{equation}

We define the value function
\begin{eqnarray}\label{eq2.5}\left\{\begin{aligned}
  V(s,\varphi)&=\underset{u(\cdot)\in\mathcal{U}^\omega[s,T]}{\essinf}J(s,\varphi;u(\cdot)),\quad(s,\varphi)\in[0,T]\times C([-\delta,0];\mathbf{R}), \\
  V(T,\varphi)&=-\phi(\varphi),\quad\varphi\in C([-\delta,0];\mathbf{R}).
\end{aligned}\right.\end{eqnarray}

Any $u^*(\cdot)\in \mathcal{U}^\omega[s,T]$ that achieves the above infimum is called an optimal control, and the corresponding solution triple $(X^*(\cdot),Y^*(\cdot),Z^*(\cdot))$ is called an optimal trajectory. We refer to $(X^*(\cdot),Y^*(\cdot),Z^*(\cdot),u^*(\cdot))$ as an optimal quadruple.

We impose the following assumptions on the coefficent of (\ref{eq2.1}):

\textbf{(H1)} (i) The functions $b=b(t,x,x_1,x_2,u), \sigma=\sigma(t,x,x_1,x_2,u)$ are continuously differentiable with respect to $(x,x_1,x_2)$ and their derivatives are bounded and continuous.

(ii) There exists a constant $C$ such that for $\phi=b,\sigma$,
\begin{equation*}
 |\phi(t,x,x_1,x_2,u)|\leq C(1+|x|+|x_1|+|x_2|),\ \forall x,x_1,x_2\in\mathbf{R},\ u\in \mathbf{R},\ t\geq 0.
\end{equation*}

(iii) The functions $b,\sigma$ are measurable, $\varphi:\Omega\rightarrow C([-\delta,0];\mathbf{R})$ is $\mathcal{F}_s^s$-measurable and \\$\mathbb{E}^{\mathcal{F}^s_s}\Big[\sup\limits_{-\delta\leq t\leq 0}|\varphi(t)|^4\Big]<\infty$.

\textbf{(H2)} (i) $f$ is measurable.

(ii) The functions $f=f(t,x,x_1,x_2,y,z,u), \phi=\phi(x,x_1)$ are twice continuously differentiable with respect to $(x,x_1,x_2,y,z)$ and their derivatives are bounded and continuous.

(iii) There exists a constant $C$ such that
\begin{equation*}
 |f(t,x,x_1,x_2,0,0,u)|+|\phi(x,x_1)|\leq C(1+|x|+|x_1|+|x_2|),\  \forall x,x_1,x_2\in \mathbf{R}^3,\ u\in \mathbf{R},\ t\geq 0.
\end{equation*}

By Lemma \ref{lem2.1} and Lemma \ref{lem2.4}, under \textbf{(H1)} and \textbf{(H2)}, for any admissible control $u(\cdot)$, FBSMDDE (\ref{eq2.1}) admits a unique adapted solution $(X^{s,\varphi;u}(\cdot),Y^{s,\varphi;u}(\cdot),Z^{s,\varphi;u}(\cdot))$ $\in\mathcal{S}_\mathcal{F}^4([s,T];\mathbf{R})\times\mathcal{S}_\mathcal{F}^4([s,T];\mathbf{R})\times L_\mathcal{F}^{2,2}([s,T];\mathbf{R})$.

The following lemma plays a crucial role in this paper, which belongs to Pardoux and Rascanu \cite{PR14}.
\begin{mylem}\label{lem3.1}
Let \textbf{(H1)} and \textbf{(H2)} hold. For given $(s,\varphi)\in[0,T)\times C([-\delta,0];\mathbf{R})$, $u(\cdot)\in\mathcal{U}^\omega[s,T]$, and $(X^{s,\varphi;u}(\cdot), Y^{s,\varphi;u}(\cdot),Z^{s,\varphi;u}(\cdot))\in\mathcal{S}^2_\mathcal{F}([s,T],\mathbf{R})\times \mathcal{S}^2_\mathcal{F}([s,T],\mathbf{R})\times L^2_\mathcal{F}([s,T];\mathbf{R})$ is the unique solution to (\ref{eq2.1}). Then the following estimate holds:
\begin{equation}\begin{aligned}\label{eq2.6}
  &\mathbb{E}^{\mathcal{F}^s_s}\Big[\underset{s\leq r\leq T}{\sup}|Y^{s,\varphi;u}(r)|^2\Big]
   \leq C\mathbb{E}^{\mathcal{F}^s_s}\Big[|\phi(X^{s,\varphi;u}(T),X_1^{s,\varphi;u}(T))|^2\\
  &\quad+\Big(\int_{s}^{T}f(r,X^{s,\varphi;u}(r),X_1^{s,\varphi;u}(r),X_2^{s,\varphi;u}(r),0,0,u(r))dr\Big)^2\Big].
\end{aligned}\end{equation}
\end{mylem}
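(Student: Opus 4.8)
The plan is to view the second equation of (\ref{eq2.1}) as a backward stochastic differential equation with Lipschitz generator — Lipschitz in $(y,z)$ because the corresponding derivatives of $f$ are bounded by \textbf{(H2)}(ii) — and carry out the classical $\mathcal{S}^2$-a priori estimate of Pardoux--R\u{a}scanu, but with the conditional expectation $\mathbb{E}^{\mathcal{F}^s_s}[\cdot]$ in place of $\mathbb{E}[\cdot]$ throughout. Write $\Theta(r):=\big(X^{s,\varphi;u}(r),X_1^{s,\varphi;u}(r),X_2^{s,\varphi;u}(r)\big)$, and abbreviate $Y:=Y^{s,\varphi;u}$, $Z:=Z^{s,\varphi;u}$, $f_0(r):=f(r,\Theta(r),0,0,u(r))$ and $\xi:=\phi(X^{s,\varphi;u}(T),X_1^{s,\varphi;u}(T))$. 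By the fundamental theorem of calculus applied in the $(y,z)$ variables, there are progressively measurable $a(\cdot),\beta(\cdot)$ with $|a(r)|\vee|\beta(r)|\le C$ (the Lipschitz constant) such that $f(r,\Theta(r),Y(r),Z(r),u(r))-f_0(r)=a(r)Y(r)+\beta(r)Z(r)$. Set $\mu:=2C+2C^2$.

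The first step is to apply It\^o's formula to $e^{\mu r}|Y(r)|^2$ on an interval $[t,T]$, giving
\[
e^{\mu t}|Y(t)|^2+\int_t^T e^{\mu r}|Z(r)|^2dr
= e^{\mu T}|\xi|^2+\int_t^T e^{\mu r}\big(2Y(f_0+aY+\beta Z)-\mu|Y|^2\big)dr-\int_t^T 2e^{\mu r}Y(r)Z(r)\,dW(r).
\]
Using $2|a|\,|Y|^2\le 2C|Y|^2$, $2|\beta|\,|Y|\,|Z|\le\tfrac12|Z|^2+2C^2|Y|^2$, the choice of $\mu$ makes the first-order-in-$Y$ part of the drift cancel into the left side, so the remaining drift is controlled by $2e^{\mu T}\big(\sup_{s\le r\le T}|Y(r)|\big)|f_0(r)|$. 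Since $1\le e^{\mu t}\le e^{\mu T}$ on $[0,T]$, I then take the supremum over $t\in[s,T]$, apply $\mathbb{E}^{\mathcal{F}^s_s}[\cdot]$ — the stochastic integral has zero $\mathcal{F}^s_s$-conditional expectation by the tower property, because $s\le t$ — bound $\sup_t\big|\int_t^T(\cdots)dW\big|\le2\sup_t\big|\int_s^t(\cdots)dW\big|$ and apply the Burkholder--Davis--Gundy inequality, and use Young's inequality twice (on $\big(\sup|Y|\big)\int_s^T|f_0|dr$ and on the BDG term $\big(\sup|Y|\big)\big(\int_s^T|Z|^2dr\big)^{1/2}$). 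This yields a schematic inequality
\[
\mathbb{E}^{\mathcal{F}^s_s}\!\Big[\sup_{s\le t\le T}|Y(t)|^2\Big]\le\tfrac12\,\mathbb{E}^{\mathcal{F}^s_s}\!\Big[\sup_{s\le t\le T}|Y(t)|^2\Big]+C_1 R+C_2\,\mathbb{E}^{\mathcal{F}^s_s}\!\int_s^T|Z(r)|^2dr,
\qquad R:=\mathbb{E}^{\mathcal{F}^s_s}\!\Big[|\xi|^2+\Big(\int_s^T|f_0(r)|dr\Big)^2\Big].
\]

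It then remains to close the estimate by controlling $\mathbb{E}^{\mathcal{F}^s_s}\int_s^T|Z|^2dr$. Evaluating the same It\^o identity at $t=s$, discarding $e^{\mu s}|Y(s)|^2\ge0$ and using $e^{\mu r}\ge1$, then taking $\mathbb{E}^{\mathcal{F}^s_s}[\cdot]$ and Young's inequality gives $\mathbb{E}^{\mathcal{F}^s_s}\int_s^T|Z|^2dr\le C_3 R+\varepsilon\,\mathbb{E}^{\mathcal{F}^s_s}[\sup_{s\le t\le T}|Y(t)|^2]$ for arbitrary $\varepsilon>0$. Substituting this back, choosing $\varepsilon$ small, and absorbing the $\sup|Y|^2$ terms on the right — legitimate precisely because $Y\in\mathcal{S}^2_\mathcal{F}([s,T];\mathbf{R})$ makes that quantity finite — produces (\ref{eq2.6}) with a constant $C$ depending only on $T$ and the Lipschitz constant of $f$.

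The step I expect to be the genuine obstacle is not the algebra above but the integrability needed to legitimize it: with only $Y\in\mathcal{S}^2_\mathcal{F}$ and $Z\in L^2_\mathcal{F}$, the integrand $e^{\mu r}Y(r)Z(r)$ of the stochastic integral need not be square-integrable on $[s,T]$, so one cannot immediately assert that its $\mathcal{F}^s_s$-conditional expectation vanishes or that the BDG step applies to an honest martingale. I would handle this in the standard way, replacing $T$ throughout by the stopping times $\tau_N:=\inf\{t\ge s:\int_s^t|Z(r)|^2dr\ge N\}\wedge T$, deriving the estimate on $[s,\tau_N]$ with an $N$-independent constant, and passing to the limit $N\to\infty$ via Fatou's lemma and monotone convergence. (One cosmetic point: the argument naturally carries $\big(\int_s^T|f(r,\Theta(r),0,0,u(r))|dr\big)^2$ on the right-hand side, which is the form in which I would read (\ref{eq2.6}).)
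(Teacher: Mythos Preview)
The paper does not supply its own proof of this lemma: it simply attributes the result to Pardoux and R\u{a}scanu \cite{PR14} and moves on. Your argument---It\^o's formula applied to $e^{\mu r}|Y(r)|^2$, absorption of the Lipschitz terms by the choice of $\mu$, BDG plus Young for the $\sup$-estimate, a second pass at $t=s$ to control $\int|Z|^2$, and localization to justify the martingale step---is exactly the standard $\mathcal{S}^2$ a~priori estimate as in that reference, carried out with conditional expectations. It is correct, and there is nothing to compare against in the paper itself.

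One minor remark: as written the lemma conditions on $\mathcal{F}^s_s$, which is the trivial (null-set augmented) $\sigma$-algebra, so $\mathbb{E}^{\mathcal{F}^s_s}[\cdot]$ is just $\mathbb{E}[\cdot]$ here. The paper later \emph{applies} the estimate on sub-intervals $[t,T]$ with $\mathbb{E}^{\mathcal{F}^s_t}[\cdot]$ (see the derivation of (\ref{eq3.20})), and your proof goes through verbatim in that setting as well, since every step---including the vanishing of the conditional expectation of the stochastic integral and the conditional BDG inequality---respects conditioning on any $\mathcal{F}^s_t$ with $t\in[s,T]$.
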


Now we start to state the maximum principle (see Shi, Xu and Zhang \cite{SXZ15}). First, we denote $\vec{p}(\cdot)=(p_1(\cdot),p_2(\cdot),p_3(\cdot))^\top$, $\vec{q}(\cdot)=(q_1(\cdot),q_2(\cdot))^\top$, and define the Hamiltonian function $H:[0,T]\times\mathbf{R}\times\mathbf{R}\times\mathbf{R}\times\mathbf{R}\times\mathbf{R}\times\textbf{U}\times\mathbf{R}^3\times\mathbf{R}^2\times\mathbf{R}\rightarrow\mathbf{R}$ as:
\begin{equation}\begin{aligned}\label{eq2.7}
  &H(t,x,x_1,x_2,y,z,u,\vec{p},\vec{q},\gamma):=p_1b(t,x,x_1,x_2,u)\\
  &\quad+p_2(x-\lambda x_1-e^{-\lambda\delta}x_2)+q_1\sigma(t,x,x_1,x_2,u)-\gamma f(t,x,x_1,x_2,y,z,u).
\end{aligned}\end{equation}
Then for given $(s,\varphi)\in [0,T]\times C([-\delta,0];\mathbf{R})$, we introduce the following adjoint equations:
\begin{eqnarray}\label{eq2.8}\left\{\begin{aligned}
 d\gamma(t)&=-H_y^*(t)dt-H_z^*(t)dW(t),\ t\in[s,T],\\
  \gamma(s)&=1,
\end{aligned}\right.\end{eqnarray}
\begin{eqnarray}\label{eq2.9}\left\{\begin{aligned}
 -dp_1(t)&=H_x^*(t)dt-q_1(t)dW(t),\ t\in[s,T],\\
 -dp_2(t)&=H_{x1}^*(t)dt-q_2(t)dW(t),\ t\in[s,T],\\
 -dp_3(t)&=H_{x2}^*(t)dt,\ t\in[s,T],\\
   p_1(T)&=-\phi_x(x^*(T),x_1^*(T))\gamma(T),\\
   p_2(T)&=-\phi_{x1}(x^*(T),x_1^*(T))\gamma(T),\\
   p_3(T)&=0.\\
\end{aligned}\right.\end{eqnarray}
where for $\psi=b,\sigma,f,\phi$ and $\kappa=x,x_1,x_2,y,z$, we define
\begin{eqnarray*}\begin{aligned}
  \psi^*_\kappa(r):=\psi_\kappa(r,X^*(r),X_1^*(r),X_2^*(r),Y^*(r),Z^*(r),u^*(r)).
\end{aligned}\end{eqnarray*}

Under \textbf{(H1), (H2)}, SDE (\ref{eq2.8}) admits a unique solution $\gamma(\cdot)\in \mathcal{S}_{\mathcal{F}}^2([s,T];\mathbf{R})$.

\noindent\textbf{Remark 3.1.} To derive the sufficent condition of the maximum principle, we have to require that $p_3(t)\equiv 0$ for all $t\in[s,T]$. In this case, BSDE (\ref{eq2.9}) admits a unique solution $(\vec{p}(\cdot),\vec{q}(\cdot))\in \mathcal{S}_{\mathcal{F}}^2([s,T];\mathbf{R}^3)\times L_{\mathcal{F}}^2([s,T];\mathbf{R}^2)$.

The following theorem gives the sufficient condition of the maximum principle.
\begin{mythm}\label{thm2.1}
Assume $u^*(\cdot)\in\mathcal{U}^{\omega}[s,T]$, $(X^*(\cdot),Y^*(\cdot),Z^*(\cdot))$ is the corresponding state trajectory. Suppose $\gamma(\cdot),(\vec{p}(\cdot),\vec{q}(\cdot))$ are the solutions to (\ref{eq2.9}) and (\ref{eq2.10}), respectively. If
\begin{equation*}\begin{aligned}
(x,x_1,x_2,y,z,u)\mapsto&H(t,x,x_1,x_2,y,z,u,\vec{p},\vec{q},\gamma)\  is\ convex\ function,\ \forall t\in[s,T],\\
                        &\phi(x,x_1)=Mx+Nx_1,\ M,N\in\mathbf{R},\\
                        &p_3(t)\equiv0,\ \forall t\in[s,T],
\end{aligned}\end{equation*}
and
\begin{equation*}
  H_u^*(t)(u^*(t)-u)\leq 0,\ \forall u\in U,\ t\in[s,T],\ a.s.,
\end{equation*}
then $u^*(\cdot)$ is the optimal control.
\end{mythm}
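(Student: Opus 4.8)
The plan is to prove directly that $u^*(\cdot)$ is a minimizer, that is, $J(s,\varphi;u^*(\cdot))\le J(s,\varphi;u(\cdot))$ for every $u(\cdot)\in\mathcal{U}^\omega[s,T]$ with corresponding state triple $(X(\cdot),Y(\cdot),Z(\cdot))$. By (\ref{eq2.3}) this is equivalent to $Y^*(s)\ge Y(s)$, so it suffices to show that $\widetilde{Y}(s):=Y(s)-Y^*(s)\le 0$. Put $\widetilde{X}:=X-X^*$, $\widetilde{X}_1:=X_1-X_1^*$, $\widetilde{X}_2:=X_2-X_2^*$, $\widetilde{Z}:=Z-Z^*$, and let $\Delta b(t):=b(t,X(t),X_1(t),X_2(t),u(t))-b(t,X^*(t),X_1^*(t),X_2^*(t),u^*(t))$, with $\Delta\sigma,\Delta f$ defined analogously. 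Subtracting (\ref{eq2.1}) for $u^*$ from that for $u$ and using (\ref{eq2.2}), one finds that $\widetilde{X}$ solves an It\^o SDE with coefficients $\Delta b,\Delta\sigma$ and $\widetilde{X}\equiv 0$ on $[s-\delta,s]$; that $\widetilde{X}_2(t)=\widetilde{X}(t-\delta)$; that $\widetilde{X}_1$ solves the linear ODE $d\widetilde{X}_1(t)=\big(\widetilde{X}(t)-\lambda\widetilde{X}_1(t)-e^{-\lambda\delta}\widetilde{X}_2(t)\big)dt$ with $\widetilde{X}_1(s)=0$ (this is precisely the drift carried by $p_2$ in the Hamiltonian (\ref{eq2.7})); and that $-d\widetilde{Y}(t)=\Delta f(t)dt-\widetilde{Z}(t)dW(t)$ with $\widetilde{Y}(T)=M\widetilde{X}(T)+N\widetilde{X}_1(T)$, where we used $\phi(x,x_1)=Mx+Nx_1$.

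The central step is a duality identity obtained from It\^o's formula. I would differentiate the products $\gamma(t)\widetilde{Y}(t)$, $p_1(t)\widetilde{X}(t)$, $p_2(t)\widetilde{X}_1(t)$ and $p_3(t)\widetilde{X}_2(t)$ on $[s,T]$, substitute the adjoint dynamics (\ref{eq2.8})--(\ref{eq2.9}) together with $H_y^*=-\gamma f_y^*$, $H_z^*=-\gamma f_z^*$, and then take $\mathbb{E}^{\mathcal{F}_s^s}[\cdot]$. The stochastic integrals drop out under the conditional expectation because their integrands lie in the right spaces, thanks to the bounded derivatives of $b,\sigma,f,\phi$ in \textbf{(H1)}--\textbf{(H2)}, the a priori estimates of Lemmas \ref{lem2.2} and \ref{lem3.1}, and the memberships $\gamma\in\mathcal{S}^2_{\mathcal{F}}$, $(\vec{p},\vec{q})\in\mathcal{S}^2_{\mathcal{F}}\times L^2_{\mathcal{F}}$. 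Using $\gamma(s)=1$, the vanishing initial data $\widetilde{X}(s)=\widetilde{X}_1(s)=\widetilde{X}_2(s)=0$, the terminal data $p_1(T)=-M\gamma(T)$, $p_2(T)=-N\gamma(T)$, $p_3(T)=0$, and the resulting identity $\gamma(T)\widetilde{Y}(T)=-p_1(T)\widetilde{X}(T)-p_2(T)\widetilde{X}_1(T)-p_3(T)\widetilde{X}_2(T)$, all the boundary terms cancel once the four expansions are added. Recognizing further that $p_1\Delta b+p_2(\widetilde{X}-\lambda\widetilde{X}_1-e^{-\lambda\delta}\widetilde{X}_2)+q_1\Delta\sigma-\gamma\Delta f$ equals the Hamiltonian increment $H(t,\ldots,u)-H(t,\ldots,u^*)$ (the two Hamiltonians evaluated along the perturbed and the optimal trajectories with the common adjoint arguments), I expect to reach
\begin{equation*}
\widetilde{Y}(s)=\mathbb{E}^{\mathcal{F}_s^s}\int_s^T\Big\{-\big[H(t,\ldots,u)-H(t,\ldots,u^*)\big]+H_x^*\widetilde{X}+H_{x1}^*\widetilde{X}_1+H_y^*\widetilde{Y}+H_z^*\widetilde{Z}\Big\}dt.
\end{equation*}

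To conclude, I would invoke the last two hypotheses. Since $p_3(t)\equiv0$, the third equation of (\ref{eq2.9}) forces $H_{x2}^*(t)\equiv0$, so the term $H_{x2}^*\widetilde{X}_2$ may be inserted in the brace above at no cost. The assumed convexity of $(x,x_1,x_2,y,z,u)\mapsto H(t,x,x_1,x_2,y,z,u,\vec{p}(t),\vec{q}(t),\gamma(t))$ then gives the subgradient inequality
\begin{equation*}
H(t,\ldots,u)-H(t,\ldots,u^*)\ge H_x^*\widetilde{X}+H_{x1}^*\widetilde{X}_1+H_{x2}^*\widetilde{X}_2+H_y^*\widetilde{Y}+H_z^*\widetilde{Z}+H_u^*(u-u^*),
\end{equation*}
hence the integrand is bounded above by $-H_u^*(t)(u(t)-u^*(t))=H_u^*(t)(u^*(t)-u(t))$, which is nonpositive by the maximum condition since $u(t)\in U$ a.s. Therefore $\widetilde{Y}(s)\le0$, i.e.\ $J(s,\varphi;u^*(\cdot))\le J(s,\varphi;u(\cdot))$ for all admissible $u(\cdot)$, so $u^*(\cdot)$ is optimal.

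I expect the main obstacle to be the bookkeeping of the delayed term $\widetilde{X}_2(\cdot)=\widetilde{X}(\cdot-\delta)$ inside the It\^o expansions: one must check that the piece $-e^{-\lambda\delta}p_2\widetilde{X}_2$ produced by $d\big(p_2\widetilde{X}_1\big)$ is absorbed exactly into the $x_2$-component of the Hamiltonian increment, so that only the first-order term $H_{x2}^*\widetilde{X}_2$ survives — and it is precisely this leftover that is annihilated by the assumption $p_3\equiv0$ (equivalently $H_{x2}^*\equiv0$, the finite-dimensional reduction). Were $p_3$ allowed to be nonzero, the delay would instead force an anticipated BSDE for the third adjoint process and the elementary cancellation used here would break down. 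A secondary but genuine point is justifying the integrability needed to discard the martingale parts after conditioning, which is where Lemmas \ref{lem2.2} and \ref{lem3.1} enter.
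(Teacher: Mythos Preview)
The paper does not actually prove Theorem~\ref{thm2.1}; it is merely stated and attributed to Shi, Xu and Zhang \cite{SXZ15} (``Now we start to state the maximum principle (see Shi, Xu and Zhang \cite{SXZ15})''), so there is no in-paper proof to compare against. Your argument is the standard duality proof one would expect in that reference: apply It\^o's formula to $\gamma\widetilde{Y}+p_1\widetilde{X}+p_2\widetilde{X}_1$, use the linear terminal condition $\phi(x,x_1)=Mx+Nx_1$ to cancel the boundary terms at $T$, recognize the drift as $\Delta H-H_x^*\widetilde{X}-H_{x1}^*\widetilde{X}_1-H_y^*\widetilde{Y}-H_z^*\widetilde{Z}$, and then invoke convexity plus $H_{x2}^*\equiv0$ (from $p_3\equiv0$) together with the first-order condition $H_u^*(u^*-u)\le0$ to conclude $\widetilde{Y}(s)\le0$. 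The computation checks out, including the absorption of the $-e^{-\lambda\delta}p_2\widetilde{X}_2$ term into the Hamiltonian increment that you flag. One small cosmetic point: there is no need to ``differentiate $p_3(t)\widetilde{X}_2(t)$'' at all, since $p_3\equiv0$ makes that product identically zero and $\widetilde{X}_2(t)=\widetilde{X}(t-\delta)$ is not an It\^o semimartingale in $t$; simply omit it and insert $H_{x2}^*\widetilde{X}_2=0$ at the convexity step, as you already do.
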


At the end of this section, we present the generalized HJB equation which will be studied.
Since the value function is defined on the infinite dimensional space $[0,T]\times C([-\delta,0];\mathbf{R})$, the HJB equation which satisfies is also infinite dimensional. However, Let $V(s,\varphi)=V(s,x,x_1)$, in other words, the value function is independent of $x_2$, here $x=\varphi(0)$, $x_1=\int_{-\delta}^0e^{\lambda\tau}\varphi(\tau)d\tau$, $x_2=\varphi(-\delta)$, then the HJB equation becomes finite dimensional:
\begin{eqnarray}\label{eq2.10}\left\{\begin{aligned}
 -V_s(s,x,x_1)&+\underset{u\in\textbf{U}}\sup\ G\big(s,x,x_1,x_2,u,-V(s,x,x_1),-V_x(s,x,x_1),\\
              &\qquad\qquad-V_{xx}(s,x,x_1),-V_{x_1}(s,x,x_1)\big)=0,\quad(t,x,x_1)\in[0,T]\times\mathbf{R}^2,\\
    V(T,x,x_1)&=-\phi(x,x_1),\quad\emph{\emph{for\ all}}\ x,x_1\in\mathbf{R},
\end{aligned}\right.\end{eqnarray}
where the generalized Hamiltonian function $G:[0,T]\times\mathbf{R}\times\mathbf{R}\times\mathbf{R}\times\textbf{U}\times\mathbf{R}\times\mathbf{R}\times\mathbf{R}\times\mathbf{R}\rightarrow\mathbf{R}$ is defined as
\begin{equation}\label{eq2.11}\begin{split}
&G(s,x,x_1,x_2,u,k,p,R,q):=\frac{1}{2}R\sigma^2(s,x,x_1,x_2,u)+pb(s,x,x_1,x_2,u)\\
&\qquad+q(x-\lambda x_1-e^{-\lambda\delta }x_2)+f(s,x,x_1,x_2,k,p\sigma(s,x,x_1,x_2,u),u).\\
\end{split}\end{equation}

\noindent\textbf{Remark 3.2.} In fact, the coefficients of (\ref{eq2.10}) $b,\sigma,f$ all depend on $x_2$, thus we can not hope the solution to (\ref{eq2.10}) is independent of $x_2$. However, if we impose some conditions to $b,\sigma,f$, then the solution of (\ref{eq2.10}) can indeed not depend on $x_2$, see more details in Theorem 2.7 of \cite{SXZ15}. In this paper, we suppose that the solution to (\ref{eq2.10}) does not depend on $x_2$.

Next let us recall some notions under the framework of viscosity solutions. More details can be seen in the paragraphs by Crandall et al. \cite{CIL92} and Yong and Zhou \cite{YZ99}. First, we give the notation of the second-order super- and sub-jets. For $v\in C([0,T]\times\mathbf{R}^2)$ and $(t,\hat{x},\hat{x}')\in[0,T]\times\mathbf{R}^2$, we define
\begin{eqnarray*}\left\{\begin{aligned}
D_{t+,x,x'}^{1,2,1,+}v(t,\hat{x},\hat{x}')=&\Big\{(\Theta,p,q,P)\in\mathbf{R}\times\mathbf{R}\times\mathbf{R}\times\mathbf{R}\Big|v(s,x,x')\leq v(t,\hat{x},\hat{x}')
                                            +\langle\Theta,s-t\rangle\\
                                           &\quad+\langle p,x-\hat{x}\rangle+\frac{1}{2}\langle P(x-\hat{x}),x-\hat{x}\rangle+\langle q,x'-\hat{x}'\rangle+o(|s-t|+|x-\hat{x}|^2\\
                                           &\quad+|x'-\hat{x}'|^2),\mbox{ as\ }s\downarrow t,\ x\rightarrow\hat{x},\ x'\rightarrow\hat{x}'\Big\}, \\
D_{t+,x,x'}^{1,2,1,-}v(t,\hat{x},\hat{x}')=&\Big\{(\Theta,p,q,P)\in\mathbf{R}\times\mathbf{R}\times\mathbf{R}\times\mathbf{R}\Big|v(s,x,x')\geq v(t,\hat{x},\hat{x}')
                                            +\langle\Theta,s-t\rangle\\
                                           &\quad+\langle p,x-\hat{x}\rangle+\frac{1}{2}\langle P(x-\hat{x}),x-\hat{x}\rangle+\langle q,x'-\hat{x}'\rangle+o(|s-t|+|x-\hat{x}|^2\\
                                           &\quad+|x'-\hat{x}'|^2),\mbox{ as\ }s\downarrow t,\ x\rightarrow\hat{x},\ x'\rightarrow\hat{x}'\Big\}.
\end{aligned}\right.\end{eqnarray*}

Next, we present the definition of viscosity solution which will be used later. It is slightly different from the typical and most common used definition (see \cite{YZ99}). However, since $D_{t,x,x'}^{1,2,1,+}v(t,\hat{x},\hat{x}')\subseteq D_{t+,x,x'}^{1,2,1,+}v(t,\hat{x},\hat{x}')$ and $D_{t,x,x'}^{1,2,1,-}v(t,\hat{x},\hat{x}')\subseteq D_{t+,x,x'}^{1,2,1,-}v(t,\hat{x},\hat{x}')$, it's clear that the viscosity solutions defined by the one-side semijets are also the ones defined by the two-side semijets.
\begin{mydef}\label{def1}
A continuous function $v$ on $[0,T]\times\mathbf{R}^2$ is called a viscosity subsolution (resp., supersolution) to the HJB equation (\ref{eq2.10}), if $v(T,x,x_1)\leq(\geq)-\phi(x,x_1)$ and
\begin{equation}\label{eq2.12}\begin{aligned}
-\varphi_t(t_0,x_0,x'_0)+&\underset{u\in\textbf{U}}\sup\ G\big(t_0,x_0,x'_0,x_2,u,-\varphi(t_0,x_0,x'_0),-\varphi_x(t_0,x_0,x'_0),\\
&\qquad-\varphi_{xx}(t_0,x_0,x'_0),-\varphi_{x_1}(t_0,x_0,x'_0)\big)\leq(\geq)\ 0,\ \forall x_2\in\mathbf{R},
\end{aligned}\end{equation}
whenever $v-\varphi$ attains a local maximum (resp., minimum) at $(t_0,x_0,x'_0)$ in a right neighborhood of $(t_0,x_0,x'_0)$ for $\varphi\in C^{1,2,1}([0,T]\times\mathbf{R}^2).$ A function $v$ is called a viscosity solution to (\ref{eq2.10}) if it is both a viscosity subsolution and a viscosity supersolution to (\ref{eq2.10}).
\end{mydef}

Actually, the viscosity solution in Definition \ref{def1} is equivalent to the one defined by the second-order super- and sub-jets. Hence we present the following lemma.

\begin{mylem}\label{lem3.2} Let $v$ be an uppercontinuous function on $(0,T)\times\mathbf{R}^2$ and $(t_0,x_0,x'_0)\in (0,T)\times\mathbf{R}^2$. Then $(\Theta,p,q,P)\in D_{t+,x}^{1,2,1,+}v(t_0,x_0,x'_0)$ if and only if there exists a function $\phi\in C^{1,2,1}((0,T)\times\mathbf{R}^2)$ such that $v-\phi$ attains a strict global maximum at $(t_0,x_0,x'_0)\in(0,T)\times \mathbf{R}^2$ from the right hand side on $t_0$ and
\begin{equation*}\begin{aligned}
  &(\phi(t_0,x_0,x'_0),\phi_t(t_0,x_0,x'_0),\phi_x(t_0,x_0,x'_0),\phi_{x_1}(t_0,x_0,x'_0),\phi_{xx}(t_0,x_0,x'_0))\\
  &=(v(t_0,x_0,x'_0),\Theta,p,q,P).
\end{aligned}\end{equation*}
Moreover, if $v$ has polynomial growth, i.e., if
\begin{equation}\label{eq2.13}
  |v(t,x,x_1)|\leq C(1+|x|^k+|x_1|^k),\mbox{ for some }k\geq 1,\ (t,x,x_1)\in(0,T)\times\mathbf{R}^2,
\end{equation}
then $\phi$ can be chosen so that $\phi,\phi_t,\phi_x,\phi_{x_1},\phi_{xx}$ satisfy (\ref{eq2.13}).
\end{mylem}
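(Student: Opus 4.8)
Lemma \ref{lem3.2} is the one-sided-in-time analogue, adapted to the mixed differentiation order $(1,2,1)$, of the classical equivalence between second-order parabolic semijets and smooth test functions; the plan is to follow the route of Crandall et al.\ \cite{CIL92} and Yong and Zhou \cite{YZ99}, carrying the extra bookkeeping needed here. The ``if'' direction is immediate: if such a $\phi\in C^{1,2,1}((0,T)\times\mathbf{R}^2)$ exists with $v-\phi$ having a global maximum over $\{s\ge t_0\}$ at $(t_0,x_0,x'_0)$ and matching the listed data there, then Taylor's formula for $\phi$ (first order in $t$ and in $x_1$, second order in $x$, with continuous derivatives) gives, as $s\downarrow t_0$, $x\to x_0$, $x'\to x'_0$,
\[
\begin{aligned}
v(s,x,x')\le\phi(s,x,x')&=v(t_0,x_0,x'_0)+\Theta(s-t_0)+p(x-x_0)\\
&\quad+\tfrac12 P(x-x_0)^2+q(x'-x'_0)+o\big(|s-t_0|+|x-x_0|^2+|x'-x'_0|^2\big),
\end{aligned}
\]
which is exactly the defining inequality of $D^{1,2,1,+}_{t+,x,x'}v(t_0,x_0,x'_0)$ (in the construction below $\phi$ is taken smooth with vanishing pure and mixed $x_1$-second derivatives at the base point, so the remainder is genuinely of this order).

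For the ``only if'' direction, assume $(\Theta,p,q,P)\in D^{1,2,1,+}_{t+,x,x'}v(t_0,x_0,x'_0)$ and write $\rho=\rho(s,x,x'):=(s-t_0)+|x-x_0|^2+|x'-x'_0|^2$ for $s\ge t_0$. First I would subtract the quadratic polynomial $\psi(s,x,x'):=v(t_0,x_0,x'_0)+\Theta(s-t_0)+p(x-x_0)+\tfrac12 P(x-x_0)^2+q(x'-x'_0)$; then $w:=v-\psi$ is uppercontinuous, $w(t_0,x_0,x'_0)=0$, and $w\le o(\rho)$ along $s\ge t_0$ near the base point. Put $g(r):=\sup\{w(s,x,x'):s\ge t_0,\ \rho(s,x,x')\le r\}$, which is nondecreasing with $g(0)=0$, $g\ge0$, and $g(r)/r\to0$ as $r\downarrow0$. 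Next, by a standard dyadic majorisation followed by mollification one builds a $C^\infty$ nondecreasing $\bar g\ge g$ on a right-neighbourhood of $0$ with $\bar g(0)=\bar g'(0)=\bar g''(0)=0$, still $\bar g(r)=o(r)$, and (after adding a term like $r^3$) $\bar g>g$ for $r>0$. Then set
\[
\phi(s,x,x'):=\psi(s,x,x')+\bar g\big(\rho(s,x,x')\big)+\Lambda(s,x,x'),
\]
where $\Lambda\ge0$ is smooth, vanishes at the base point with all its derivatives of order $\le(1,2,1)$, and is coercive enough to make $v-\phi$ a \emph{strict} global maximum over $\{s\ge t_0\}$ at $(t_0,x_0,x'_0)$: in general $\Lambda$ is built from a smooth majorant of the locally finite function $r\mapsto\sup_{|z|\le r}v(z)$, while under the polynomial growth (\ref{eq2.13}) one takes $\Lambda(s,x,x')=c\big(|x-x_0|^{2k}+|x'-x'_0|^{2k}+(s-t_0)^2\big)$ with $c$ exceeding the leading constant in the bound on $v$, so that $\phi$ and $\phi_t,\phi_x,\phi_{x_1},\phi_{xx}$ all have polynomial growth of order $\le k$, i.e.\ satisfy (\ref{eq2.13}). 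One then verifies: $\phi\in C^{1,2,1}$ (note $\rho$ is smooth for $s\ge t_0$ and $\bar g\in C^2$); $\phi(t_0,x_0,x'_0)=v(t_0,x_0,x'_0)$ and $(\phi_t,\phi_x,\phi_{x_1},\phi_{xx})(t_0,x_0,x'_0)=(\Theta,p,q,P)$, since every derivative of $\bar g(\rho(\cdot))$ and of $\Lambda$ vanishes there; and $v-\phi=w-\bar g(\rho)-\Lambda<0$ off the base point by construction. Extending $\phi$ smoothly and arbitrarily to $s<t_0$ finishes the argument; the sub-jet statement is symmetric (replace $\sup$/maximum by $\inf$/minimum and uppercontinuous by lowercontinuous).

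The main obstacle is the middle real-analysis step: upgrading the merely monotone, $o(r)$-small modulus $g$ to a $C^2$ (indeed $C^\infty$) nondecreasing majorant $\bar g$ whose value and first two derivatives vanish at $0$, while keeping $\bar g(r)=o(r)$ and $\bar g>g$ — so that, after composition with the smooth quantity $\rho$, the bump $\bar g(\rho)$ contributes nothing to the jet data at the base point — and, in the polynomial-growth regime, simultaneously tuning $\Lambda$ so the maximum is \emph{global} and \emph{strict} without raising the growth of $\phi$ or of its five listed derivatives above order $k$. Both are handled by standard but slightly delicate one-variable constructions (dyadic majorisation plus mollification for $\bar g$; a sufficiently large leading coefficient for $\Lambda$); the only genuine novelty relative to the classical statement is restricting the time variable to $s\ge t_0$ throughout and tracking the $(1,2,1)$ differentiation order in the Taylor expansions.
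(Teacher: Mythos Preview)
The paper does not give its own proof of Lemma \ref{lem3.2}; it simply cites Yong and Zhou \cite{YZ99} and Zhou \cite{Zhou91}. Your sketch is precisely the standard construction found in those references (subtract the jet polynomial, majorise the positive part by a monotone modulus $g$, smooth $g$ to a $C^2$ function $\bar g$ flat at $0$, compose with the weighted distance $\rho$, and add a coercive penalty $\Lambda$), adapted to the one-sided time variable and the mixed $(1,2,1)$ order, so in substance you are reproducing the proof the authors defer to.

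One small slip worth correcting: with $\Lambda(s,x,x')=c\big(|x-x_0|^{2k}+|x'-x'_0|^{2k}+(s-t_0)^2\big)$ the functions $\phi,\phi_x,\phi_{xx}$ have polynomial growth of order $2k,\,2k-1,\,2k-2$, not ``order $\le k$'' as you write. This is harmless, since the conclusion of the lemma (and its use in Theorem \ref{thm5.1}) only requires that $\phi$ and the listed derivatives be of \emph{some} polynomial growth, i.e.\ satisfy an estimate of the form \eqref{eq2.13} for \emph{some} exponent; but you should not claim the exponent is preserved. Apart from this, your outline is correct and complete at the level expected for a citation-style lemma.
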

The proof can be found in Yong and Zhou \cite{YZ99}, Zhou \cite{Zhou91}.

\section{Connection between the Adjoint Variables and the Value Function}

In this section, we investigate the connection between the adjoint variables and the value function. In the whole section, we suppose the value function is independent of $x_2$.

\begin{mythm}\label{th3.1}
Let \textbf{(H1), (H2)} hold. Let $(s,x,x_1)\in[0,T]\times\mathbf{R}\times\mathbf{R}$ be fixed. Suppose that $u^*(\cdot)$ is an optimal control, and the triple $(X^*(\cdot),Y^*(\cdot),Z^*(\cdot))$ is the corresponding optimal trajectory. $V\in C([0,T]\times\mathbf{R}\times\mathbf{R})$ is the value function. Let $\gamma(\cdot)\in\mathcal{S}^2_\mathcal{F}([s,T];\mathbf{R})$, $(\vec{p}(\cdot),\vec{q}(\cdot))\in\mathcal{S}_\mathcal{F}^2([s,T];\mathbf{R}^3)\times L^2_\mathcal{F}([s,T];\mathbf{R}^2)$ satisfy the adjoint equations (\ref{eq2.8}), (\ref{eq2.9}), respectively. Furthermore, assume that $p_3(t)\equiv 0$ for all $t\in[s,T]$. Then
\begin{equation}\label{eq3.1}\begin{aligned}
&\big\{p_1(t)(\gamma(t))^{-1}\big\}\subseteq D^{1,+}_{x}V(t,X^*(t),X_1^*(t)), \\
 &D^{1,-}_{x}V(t,X^*(t),X_1^*(t))\subseteq\big\{p_1(t)(\gamma(t))^{-1}\big\},\ \mbox{for all } t\in[s,T],\ \mathbb{P}\mbox{-}a.s.,
\end{aligned}\end{equation}
where $X_1^*(t):=\int_{-\delta}^0e^{\lambda\tau}X^*(t+\tau)d\tau$, $X_2^*(t):=X^*(t-\delta).$
\end{mythm}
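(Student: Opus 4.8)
\noindent\emph{Proof strategy.}\quad The plan is to reduce both inclusions in (\ref{eq3.1}) to the single one-sided first-order expansion
\begin{equation*}
V\big(t,X^*(t)+h,X_1^*(t)\big)\le V\big(t,X^*(t),X_1^*(t)\big)+\frac{p_1(t)}{\gamma(t)}\,h+o(h)\qquad(h\to0),\ \mathbb{P}\mbox{-}a.s.,
\end{equation*}
to be established for each fixed $t\in[s,T]$. Indeed, this inequality is by definition the assertion $p_1(t)(\gamma(t))^{-1}\in D^{1,+}_xV(t,X^*(t),X_1^*(t))$; and if $q\in D^{1,-}_xV(t,X^*(t),X_1^*(t))$, then combining the defining inequality $qh+o(|h|)\le V(t,X^*(t)+h,X_1^*(t))-V(t,X^*(t),X_1^*(t))$ with the displayed one yields $qh\le\frac{p_1(t)}{\gamma(t)}h+o(|h|)$ for all small $h$; dividing by $h>0$ and then by $h<0$ and letting $h\to0$ forces $q=p_1(t)(\gamma(t))^{-1}$. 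Hence both lines of (\ref{eq3.1}) follow.

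To obtain the expansion I would first use the dynamic programming principle along the optimal trajectory, which together with the optimality of $u^*(\cdot)$ and uniqueness for the backward equation in (\ref{eq2.1}) gives $V(t,X^*(t),X_1^*(t))=-Y^*(t)$, $\mathbb{P}$-a.s. Fix a deterministic $\rho\in C([-\delta,0];\mathbf{R})$ with $\rho(0)=1$ and $\int_{-\delta}^0e^{\lambda\tau}\rho(\tau)d\tau=0$ (an affine $\rho$ works), and for small $h\in\mathbf{R}$ set $\varphi^h(t+\tau):=X^*(t+\tau)+h\rho(\tau)$, $\tau\in[-\delta,0]$. Let $(X^h(\cdot),Y^h(\cdot),Z^h(\cdot))$ solve (\ref{eq2.1}) on $[t,T]$ with initial segment $\varphi^h$ and control $u^*(\cdot)$. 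Since $\varphi^h(0)=X^*(t)+h$ and, by the choice of $\rho$, $\int_{-\delta}^0e^{\lambda\tau}\varphi^h(t+\tau)d\tau=X_1^*(t)$, the definition (\ref{eq2.5}) of $V$ gives $V(t,X^*(t)+h,X_1^*(t))\le J(t,\varphi^h;u^*)=-Y^h(t)$, so it remains to prove $-Y^h(t)=-Y^*(t)+\frac{p_1(t)}{\gamma(t)}h+o(h)$.

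For this I would carry out the first-order variational expansion of (\ref{eq2.1}) around $(X^*(\cdot),Y^*(\cdot),Z^*(\cdot))$: by the a priori estimates of Lemma \ref{lem2.2} and Lemma \ref{lem3.1} and the boundedness and continuity of the derivatives of $b,\sigma,f,\phi$ in \textbf{(H1)}--\textbf{(H2)}, the map $h\mapsto(X^h(\cdot),Y^h(\cdot))$ is continuously differentiable at $h=0$, so $X^h=X^*+h\xi^1+o(h)$ and $Y^h=Y^*+h\eta^1+o(h)$ in $\mathcal{S}^2_{\mathcal{F}}$ (hence pointwise at $r=t$, $\mathbb{P}$-a.s.), where $\xi^1$ solves the linear SDDE with coefficients $b^*_x,b^*_{x_1},b^*_{x_2},\sigma^*_x,\sigma^*_{x_1},\sigma^*_{x_2}$ and initial segment $\xi^1(t+\tau)=\rho(\tau)$, and $(\eta^1,\zeta^1)$ solves the linear BSDE with coefficients $f^*_x,f^*_{x_1},f^*_{x_2},f^*_y,f^*_z$ and terminal value $\eta^1(T)=\phi^*_x\xi^1(T)+\phi^*_{x_1}\xi^1_1(T)$; here $\xi^1_1(r):=\int_{-\delta}^0e^{\lambda\tau}\xi^1(r+\tau)d\tau$, $\xi^1_2(r):=\xi^1(r-\delta)$, and note $\xi^1(t)=1$, $\xi^1_1(t)=0$. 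Now apply It\^o's formula to $\Xi(r):=\gamma(r)\eta^1(r)+p_1(r)\xi^1(r)+p_2(r)\xi^1_1(r)$ on $[t,T]$. Using the adjoint equations (\ref{eq2.8})--(\ref{eq2.9}), the explicit form (\ref{eq2.7}) of $H$ (so that $H^*_y=-\gamma f^*_y$ and $H^*_z=-\gamma f^*_z$), and the identity $\frac{d}{dr}\xi^1_1(r)=\xi^1(r)-\lambda\xi^1_1(r)-e^{-\lambda\delta}\xi^1_2(r)$, a direct computation shows that all $\xi^1$- and $\xi^1_1$-terms in the drift of $\Xi$ cancel and the only surviving drift term is $H^*_{x_2}(r)\,\xi^1_2(r)$; but the third equation in (\ref{eq2.9}) together with the standing hypothesis $p_3(\cdot)\equiv0$ forces $H^*_{x_2}(\cdot)\equiv0$, so $\Xi(\cdot)$ is a martingale on $[t,T]$. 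From $p_1(T)=-\phi^*_x\gamma(T)$ and $p_2(T)=-\phi^*_{x_1}\gamma(T)$ one checks $\Xi(T)=\gamma(T)\eta^1(T)+p_1(T)\xi^1(T)+p_2(T)\xi^1_1(T)=0$, hence $\Xi(t)=0$, i.e.\ $\gamma(t)\eta^1(t)+p_1(t)=0$; therefore $\eta^1(t)=-p_1(t)(\gamma(t))^{-1}$ and $-Y^h(t)=-Y^*(t)-h\eta^1(t)+o(h)=-Y^*(t)+\frac{p_1(t)}{\gamma(t)}h+o(h)$, as required.

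I expect the main obstacle to be analytic rather than algebraic: the cancellation in $d\Xi$ is forced by the way the Hamiltonian (\ref{eq2.7}) and the adjoint equations (\ref{eq2.8})--(\ref{eq2.9}) are set up, and the pointwise-delay ($x_2$) contribution is disposed of precisely by $p_3(\cdot)\equiv0$. What needs care is (i) justifying the variational expansion with a remainder that is genuinely $o(h)$ $\mathbb{P}$-a.s., uniformly as $h\to0$, so that it may legitimately be inserted into the one-sided jet definitions, and (ii) verifying the integrability of the products $\gamma\eta^1$, $p_1\xi^1$, $p_2\xi^1_1$ and of the stochastic integrals appearing in $d\Xi$, so that $\Xi(\cdot)$ is a genuine (not merely local) martingale; both points rely on the fourth-moment hypotheses in \textbf{(H1)}--\textbf{(H2)} and on the estimates of Lemmas \ref{lem2.2} and \ref{lem3.1}. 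Finally, the choice of $\rho$ with $\int_{-\delta}^0e^{\lambda\tau}\rho(\tau)d\tau=0$, which makes $\xi^1_1(t)=0$, is exactly what isolates $p_1(t)$ (rather than a linear combination of $p_1(t)$ and $p_2(t)$) in the relation $\Xi(t)=0$.
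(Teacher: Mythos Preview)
Your strategy is correct and is, at its core, the same duality argument the paper uses: perturb the initial datum so that the $x$–coordinate moves while $x_1$ stays fixed, bound $V$ from above by the cost along $u^*$, and identify the first–order variation of $-Y$ with $p_1/\gamma$ via a product rule between the linearised forward/backward equations and the adjoint system. Let me point out the three places where your execution differs from the paper's and what each buys.

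\emph{(1) The perturbation.} The paper does not introduce a continuous profile $\rho$; it keeps $X^{t,x',X_1^*(t)}(r)=X^*(r)$ on $[t-\delta,t)$ and only changes the endpoint to $x'$. This makes $\hat X_1(t)=0$ trivially and, more importantly, $\hat X_2(r)=0$ for $r\in[t,t+\delta)$, a property the paper exploits explicitly (and is precisely what Remark~4.2 refers to). Your choice of a continuous $\rho$ with $\int_{-\delta}^0e^{\lambda\tau}\rho(\tau)\,d\tau=0$ achieves $\xi^1_1(t)=0$ equally well; you lose $\xi^1_2\equiv0$ on $[t,t+\delta)$, but, as you correctly observe, you do not need it because of point (2).

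\emph{(2) The $x_2$–term.} You dispose of the surviving drift term $H^*_{x_2}(r)\xi^1_2(r)$ in one line by noting that $-dp_3(t)=H^*_{x_2}(t)\,dt$ with $p_3\equiv0$ forces $H^*_{x_2}\equiv0$. This is valid, and it is cleaner than the paper's route: there the authors pass to $p_3/\gamma$, integrate over $[t+\delta,T]$, shift the variable, and then use that $\hat X_2$ vanishes on $[t,t+\delta)$ to conclude $\int_t^T\hat X_2(r)\cdot[\text{coefficient}](r)\,dr=0$. The paper's manoeuvre is unnecessary once one observes the pointwise vanishing of $H^*_{x_2}$, so your shortcut genuinely simplifies that step.

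\emph{(3) The duality quantity.} You test with $\Xi=\gamma\eta^1+p_1\xi^1+p_2\xi^1_1$; the paper first forms $\tilde p=p_1/\gamma$, $\check p=p_2/\gamma$ and then tests with $\tilde p\hat X+\check p\hat X_1$, carrying the second–order remainders of $f$ and $\phi$ explicitly (via $\tilde D^2f$, $\tilde D^2\phi$) instead of linearising first. The two computations are algebraically equivalent; yours avoids the preliminary division by $\gamma$, while the paper's keeps the BSDE–estimate of Lemma~\ref{lem3.1} in plain view.

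Finally, the analytic concern you flag in (i) is exactly right and is handled in the paper by conditioning on $\mathcal F_t^s$, restricting $x'$ to rationals so that the countably many $\mathbb P$-a.s.\ estimates share a common full-measure set, and then passing to all $x'$ by continuity of $V(t,\cdot,X_1^*(t))$. Your scheme will need the same device to turn the $\mathcal S^2_{\mathcal F}$ expansion into a $\mathbb P$-a.s.\ pointwise $o(h)$ at $r=t$.
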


\begin{proof}
We split the proof into several steps.

$Step\ 1$. Variation equation for SMDDE.

Fix a $t\in [s,T]$. For any $x'\in\mathbf{R}$, denote by $X^{t,x',X^*_1(t)}(\cdot)$ the solution to the following SMDDE on $[t,T]$:
\begin{eqnarray}\label{eq3.2}\left\{\begin{aligned}
  dX^{t,x',X^*_1(t)}(r)&=b\big(r,X^{t,x',X^*_1(t)}(r),X_1^{t,x',X^*_1(t)}(r),X_2^{t,x',X^*_1(t)}(r),u^*(r)\big)dr\\
                       &\quad+\sigma\big(r,X^{t,x',X^*_1(t)}(r),X_1^{t,x',X^*_1(t)}(r),X_2^{t,x',X^*_1(t)}(r),u^*(r)\big)dW(r),\\
   X^{t,x',X^*_1(t)}(t)&=x',\ X^{t,x',X^*_1(t)}(r)=X^*(r),\ r\in[t-\delta,t),
\end{aligned}\right.\end{eqnarray}
where
\begin{equation}\label{eq3.3}
   X_1^{t,x',X^*_1(t)}(r):=\int_{-\delta}^0e^{\lambda\tau} X^{t,x',X^*_1(t)}(r+\tau)d\tau,\ X_2^{t,x',X^*_1(t)}(r):= X^{t,x',X^*_1(t)}(r-\delta).
\end{equation}

It is clear that SMDDE (\ref{eq3.2}) can be regarded as an SMDDE on $(\Omega,\mathcal{F},\{\mathcal{F}^s_r\}_{r\geq s},\mathbb{P}(\cdot|\mathcal{F}^s_t)(\omega))$ for $\mathbb{P}$-$a.s.\ \omega$, where $\mathbb{P}(\cdot|\mathcal{F}^s_t)(\omega)$ is the regular conditional probability for given $\mathcal{F}^s_t$ defined on $(\Omega,\mathcal{F})$. For any $t\leq r\leq T$, set
\begin{equation*}\begin{aligned}
&\hat{X}(r):=X^{t,x',X^*_1(t)}(r)-X^*(r),\ \hat{X_1}(r):=X_1^{t,x',X^*_1(t)}(r)-X_1^*(r),\\
&\hat{X}_2(r):=X_2^{t,x',X^*_1(t)}(r)-X_2^*(r).
\end{aligned}\end{equation*}
Then we can write the equation for $\hat{X}(r)$ as follows:
\begin{eqnarray}\label{eq3.4}\left\{\begin{aligned}
  d\hat{X}(r)&=\big\{b^*_x(r)\hat{X}(r)+b_{x_1}^*(r)\hat{X_1}(r)+b_{x_2}^*(r)\hat{X_2}(r)+\varepsilon_1(r)\big\}dr\\
             &\quad+\big\{\sigma_x^*(r)\hat{X}(r)+\sigma_{x_1}^*(r)\hat{X_1}(r)+\sigma^*_{x_2}(r)\hat{X_2}(r)+\varepsilon_2(r)\big\}dW(r),\\
   \hat{X}(t)&=x'-X^*(t),\ \hat{X}(r)=0,\ r\in[t-\delta,t),
\end{aligned}\right.\end{eqnarray}
where
\begin{eqnarray}\left\{\begin{aligned}\label{eq3.5}
       \varepsilon_1(r)&:=[b_x^\theta(r)-b_x^*(r)]\hat{X}(r)+[b_{x_1}^\theta(r)-b_{x_1}^*(r)]\hat{X_1}(r)+[b_{x_2}^\theta(r)-b_{x_2}^*(r)]\hat{X_2}(r),\\
       \varepsilon_2(r)&:=[\sigma_x^\theta(r)-\sigma_x^*(r)]\hat{X}(r)+[\sigma_{x_1}^\theta(r)-\sigma_{x_1}^*(r)]\hat{X_1}(r)+[\sigma_{x_2}^\theta(r)-\sigma_{x_2}^*(r)]\hat{X_2}(r).
\end{aligned}\right.\end{eqnarray}
Here, for $\psi=b,\sigma$, and $\kappa=x,x_1,x_2$, we define
\begin{eqnarray}\label{eq3.6}\begin{aligned}
  \psi^\theta_\kappa(r):=\int_0^1\psi_\kappa\big(r,X^*(r)+\theta\hat{X}(r),X_1^*(r)+\theta\hat{X_1}(r),X_2^*(r)+\theta\hat{X_2}(r),u^*(r)\big)d\theta.
\end{aligned}\end{eqnarray}

Recall the estimate of the solution to SMDDE, by Lemma \ref{lem2.2} we obtain
\begin{equation}\begin{aligned}\label{eq3.7}
  \mathbb{E}^{\mathcal{F}_t^s}\Big[\sup\limits_{t\leq r\leq T}|\hat{X}(r)|^p\Big]\leq C|x'-X^*(t)|^p,\ \mathbb{P}\mbox{-}a.s.
\end{aligned}\end{equation}
Similarly, we have
\begin{equation}\begin{aligned}\label{eq3.8}
  &\mathbb{E}^{\mathcal{F}_t^s}\Big[\sup\limits_{t\leq r\leq T}|\hat{X_1}(r)|^p\Big]
   =\mathbb{E}^{\mathcal{F}_t^s}\Big[\sup\limits_{t\leq r\leq T}|\int_{-\delta}^0e^{\lambda\tau}\hat{X}(r+\tau)d\tau|^p\Big]\\
  &\leq \mathbb{E}^{\mathcal{F}_t^s}\Big[\sup\limits_{t\leq r\leq T}\Big|\int_{-\delta}^0e^{q\lambda\tau}d\tau\Big|^{p-1}\int_{-\delta}^0|\hat{X}(r+\tau)|^pd\tau\Big]\\
  &\leq C\mathbb{E}^{\mathcal{F}_t^s}\Big[\sup\limits_{t-\delta\leq r\leq T}|\hat{X}(r)|^p\Big]\leq C|x'-X^*(t)|^p,\ \mathbb{P}\mbox{-}a.s.,
\end{aligned}\end{equation}
\begin{equation}\begin{aligned}\label{eq3.9}
  \mathbb{E}^{\mathcal{F}_t^s}\Big[\sup\limits_{t\leq r\leq T}|\hat{X_2}(r)|^p\Big]
  \leq \mathbb{E}^{\mathcal{F}_t^s}\Big[\sup\limits_{t-\delta\leq r\leq T}|\hat{X}(r)|^p\Big]\leq C|x'-X^*(t)|^p,\ \mathbb{P}\mbox{-}a.s.
\end{aligned}\end{equation}

 $Step\ 2$. Estimate of remainder terms of SMDDE.

For any integer $p\geq2$, we have
\begin{eqnarray}\label{eq3.10}\left\{\begin{aligned}
   &\mathbb{E}^{\mathcal{F}_t^s}\bigg[\bigg(\int_t^T|\varepsilon_1(r)|^2dr\bigg)^{\frac{p}{2}}\bigg]= o(|x'-X^*(t)|^p),\ \mathbb{P}\mbox{-}a.s., \\
   &\mathbb{E}^{\mathcal{F}_t^s}\bigg[\bigg(\int_t^T|\varepsilon_2(r)|^2dr\bigg)^{\frac{p}{2}}\bigg]=o(|x'-X^*(t)|^p),\ \mathbb{P}\mbox{-}a.s.
\end{aligned}\right.\end{eqnarray}

To prove (\ref{eq3.10}), by the continuity of $b_x,b_{x_1},b_{x_2}$ as well as (\ref{eq3.7}), (\ref{eq3.8}) and (\ref{eq3.9}), we have
\begin{eqnarray}\nonumber\begin{aligned}
  &\mathbb{E}^{\mathcal{F}^s_t}\bigg[\bigg(\int_t^T|\varepsilon_1(r)|^2dr\bigg)^{\frac{p}{2}}\bigg]
   \leq C\mathbb{E}^{\mathcal{F}^s_t}\bigg[\bigg(\int_t^T[b_x^\theta(r)-b_x^*(r)]^2|\hat{X}(r)|^2 dr\bigg)^{\frac{p}{2}}\bigg]\\
  &\quad+C\mathbb{E}^{\mathcal{F}^s_t}\bigg[\bigg(\int_t^T[b_{x1}^\theta(r)-b_{x1}^*(r)]^2|\hat{X_1}(r)|^2 dr\bigg)^{\frac{p}{2}}\bigg]\\
  &\quad+C\mathbb{E}^{\mathcal{F}^s_t}\bigg[\bigg(\int_t^T[b_{x2}^\theta(r)-b_{x2}^*(r)]^2|\hat{X_2}(r)|^2 dr\bigg)^{\frac{p}{2}}\bigg]\\
 =&\ o(|x'-X^*(t)|^p),\ \mathbb{P}\mbox{-}a.s.
\end{aligned}\end{eqnarray}
The second equality in (\ref{eq3.10}) can be proved similarly.

$Step\ 3.$ Duality relation.

Denoting $\tilde{p}(t):=p_1(t)(\gamma(t))^{-1}$, $\check{p}(t):=p_2(t)(\gamma(t))^{-1}$, then applying It\^o's formula, we can derive
\begin{equation}\left\{\begin{aligned}\label{eq3.11}
  d\tilde{p}(t)&=\big\{f_x^*(t)-\tilde{p}(t)[b_x^*(t)+f_y^*(t)+\sigma_x^*(t)f_z^*(t)]-\tilde{q}(t)[\sigma_x^*(t)+f_z^*(t)]-\check{p}(t)\big\}dt\\
               &\quad+\tilde{q}(t)dW(t),\ t\in[s,T],\\
  \tilde{p}(T)&=-\phi_x(X^*(T),X_1^*(T)),
\end{aligned}\right.\end{equation}
\begin{equation}\left\{\begin{aligned}\label{eq3.12}
  d\check{p}(t)&=\big\{\check{p}(t)[\lambda-f_y^*(t)]-\check{q}(t)f_z^*(t)+f_{x1}^*(t)-\tilde{p}(t)[b_{x1}^*(t)+f_z^*(t)\sigma_{x1}^*(t)]\\
               &\qquad-\tilde{q}(t)\sigma_{x1}^*(t)\big\}dt+\check{q}(t)dW(t),\ t\in[s,T],\\
   \check{p}(T)&=-\phi_{x1}(X^*(T),X_1^*(T)),
\end{aligned}\right.\end{equation}
where $\tilde{q}(t):=q_1(t)(\gamma(t))^{-1}-\tilde{p}(t)f_z^*(t)$, $\check{q}(t):=q_2(t)(\gamma(t))^{-1}-\check{p}(t)f_z^*(t)$.

Obviously, $(\tilde{p}(\cdot),\tilde{q}(\cdot))\in\mathcal{S}_\mathcal{F}^2([s,T];\mathbf{R})\times L_\mathcal{F}^2([s,T];\mathbf{R})$, and $(\check{p}(\cdot),\check{q}(\cdot))\in \mathcal{S}_\mathcal{F}^2([s,T];\mathbf{R})\times L_\mathcal{F}^2([s,T];\mathbf{R})$.

Denoting $\hat{Y}(r):=\tilde{p}(r)\hat{X}(r)$, $\check{Y}(r):=\check{p}(r)\hat{X_1}(r)$, and applying It\^o's formula to $\hat{Y}(\cdot)$, by (\ref{eq3.4}), (\ref{eq3.11}), we obtain
\begin{eqnarray}\label{eq3.13}\left\{\begin{aligned}
d\hat{Y}(r)&=A(r)dr+\hat{Z}(r)dW(r),\ r\in[t,T],\\
 \hat{Y}(T)&=-\phi_x(X^*(T),X_1^*(T))\hat{X}(T),
\end{aligned}\right.\end{eqnarray}
where
\begin{eqnarray}\nonumber\begin{aligned}
      A(r)&:=\hat{X}(r)\big[f_x^*(r)-\tilde{p}(r)f_y^*(r)-\tilde{p}(r)\sigma_x^*(r)f_z^*(r)-\tilde{q}(r)f_z^*(r)-\check{p}(t)\big]\\
          &\quad+\tilde{p}(r)\big[b_{x_1}^*(r)\hat{X_1}(r)+b_{x_2}^*(r)\hat{X_2}(r)+\varepsilon_1(r)\big]\\
          &\quad+\tilde{q}(r)[\sigma_{x_1}^*(r)\hat{X_1}(r)+\sigma^*_{x_2}(r)\hat{X_2}(r)+\varepsilon_2(r)],\\
\hat{Z}(r)&:=\hat{X}(r)\tilde{q}(r)+\hat{X}(r)\tilde{p}(r)\sigma_x^*(r)+\tilde{p}(r)\hat{X_1}(r)\sigma_{x1}^*(r)+\tilde{p}(r)\sigma_{x2}^*(r)\hat{X_2}(r)+\tilde{p}(r)\varepsilon_2(r).
\end{aligned}\end{eqnarray}
Noting $d\hat{X_1}(r)=[\hat{X}(r)-\lambda\hat{X_1}(r)-e^{-\lambda\delta}\hat{X_2}(r)]dr$, then applying It\^o's formula to $\check{Y}(\cdot)$, we get
\begin{eqnarray}\label{eq3.14}\left\{\begin{aligned}
d\check{Y}(r)&=B(r)dr+\check{Z}(r)dW(r),\ r\in[t,T],\\
 \check{Y}(T)&=-\phi_{x1}(X^*(T),X_1^*(T))\hat{X_1}(T),
\end{aligned}\right.\end{eqnarray}
where\begin{eqnarray}\nonumber\begin{aligned}
        B(r)&:=\check{p}(r)\hat{X}(r)-e^{-\lambda\delta}\check{p}(r)\hat{X_2}(r)+\hat{X_1}(r)\big[-\check{p}(r)f_y^*(r)-\check{q}(r)f_z^*(r)+f_{x1}^*(r)\\
            &\quad-\tilde{p}(r)b_{x1}^*(r)-\tilde{p}(r)f_z^*(r)\sigma_{x1}^*(r)-\tilde{q}(r)\sigma_{x1}^*(r)\big]\\
\check{Z}(r)&:=\check{q}(r)\hat{X_1}(r).
\end{aligned}\end{eqnarray}

$Step\ 4.$ Variational equation for BSDE.

For the above $x'\in\mathbf{R}$, recall that $X^{t,x',X_1^*(t)}(\cdot)$ is given by (\ref{eq3.2}) and denote by $(Y^{t,x',X_1^*(t)}(\cdot),\\Z^{t,x',X_1^*(t)}(\cdot))$ the solution to the following BSDE on $[t,T]$:
\begin{eqnarray}\label{eq3.15}\left\{\begin{aligned}
 -dY^{t,x',X_1^*(t)}(r)&=f\big(r,X^{t,x',X_1^*(t)}(r),X_1^{t,x',X_1^*(t)}(r),X_2^{t,x',X_1^*(t)}(r),\\
                       &\qquad Y^{t,x',X_1^*(t)}(r),Z^{t,x',X_1^*(t)}(r),u^*(r)\big)dr-Z^{t,x',X_1^*(t)}(r)dW(r),\\
   Y^{t,x',X_1^*(t)}(T)&=\phi(X^{t,x',X_1^*(t)}(T),X_1^{t,x',x_1^*(t)}(T)).
\end{aligned}\right.\end{eqnarray}
Similarly BSDE (\ref{eq3.15}) is defined on $(\Omega,\mathcal{F},\{\mathcal{F}^s_r\}_{r\geq s},\mathbb{P}(\cdot|\mathcal{F}^s_t)(\omega))$ for $\mathbb{P}$-$a.s.\ \omega.$

For any $t\leq r \leq T$, set
\begin{eqnarray}\begin{aligned}\label{eq3.16}
   &\tilde{Y}(r):=-Y^{t,x',X_1^*(t)}(r)+Y^*(r)-\hat{Y}(r)-\check{Y}(r), \\
   &\tilde{Z}(r):=-Z^{t,x',X_1^*(t)}(r)+Z^*(r)-\hat{Z}(r)-\check{Z}(r).
\end{aligned}\end{eqnarray}
Thus by (\ref{eq3.13}), (\ref{eq3.14}) and (\ref{eq3.15}), we get
\begin{eqnarray}\label{eq3.17}\left\{\begin{aligned}
  d\tilde{Y}(r)=&\big[f\big(r,X^{t,x',X_1^*(t)}(r),X_1^{t,x',X_1^*(t)}(r),X_2^{t,x',X_1^*(t)}(r),Y^{t,x',X_1^*(t)}(r),\\
                &\quad Z^{t,x',X_1^*(t)}(r),u^*(r)\big)-f^*(r)-A(r)-B(r)\big]dr+\tilde{Z}(r)dW(r), \\
   \tilde{Y}(T)=&-(\hat{X}(T),\hat{X_1}(T))\tilde{D}^2\phi(T)(\hat{X}(T),\hat{X_1}(T))^\top,
\end{aligned}\right.\end{eqnarray}
where $\tilde{D}^2\phi(T):=\int_0^1\int_0^1\lambda D^2\phi(X^*(T)+\lambda\theta\hat{X}(T),X_1^*(T)+\lambda\theta\hat{X_1}(T))d\lambda d\theta$.

By the boundedness of $\tilde{D}^2\phi$, we have
\begin{equation}\label{eq3.18}\begin{aligned}
  \mathbb{E}^{\mathcal{F}^s_t}|\tilde{Y}(T)|^2\leq C\mathbb{E}^{\mathcal{F}_t^s}\big[|\hat{X}(T)|^4+|\hat{X_1}(T)|^4\big]=o(|x'-X^*(t)|^2),\ \mathbb{P}\mbox{-}a.s.
\end{aligned}\end{equation}
Noting (\ref{eq3.16}), we have
\begin{equation*}\begin{aligned}\begin{aligned}
  &\quad f\big(r,X^{t,x',X_1^*(t)}(r),X_1^{t,x',X_1^*(t)}(r),X_2^{t,x',X_1^*(t)}(r),Y^{t,x',X_1^*(t)}(r),Z^{t,x',X_1^*(t)}(r),u^*(r)\big)-f^*(r)\\
  &=f(r,X^{t,x',X_1^*(t)}(r),X_1^{t,x',X_1^*(t)}(r),X_2^{t,x',X_1^*(t)}(r),Y^{t,x',X_1^*(t)}(r),Z^{t,x',X_1^*(t)}(r),u^*(r))\\
  &\quad-f(r,X^*(r)+\hat{X}(r),X_1^*(r)+\hat{X_1}(r),X_2^*(r)+\hat{X_2}(r),Y^*(r)-\hat{Y}(r)-\check{Y}(r),\\
  &\qquad\quad Z^*(r)-\hat{Z}(r)-\check{Z}(r),u^*(r))\\
  &\quad+f(r,X^*(r)+\hat{X}(r),X_1^*(r)+\hat{X_1}(r),X_2^*(r)+\hat{X_2}(r),Y^*(r)-\hat{Y}(r)-\check{Y}(r),\\
  &\qquad\quad Z^*(r)-\hat{Z}(r)-\check{Z}(r),u^*(r))-f^*(r)\\
  &=-\tilde{f}_y(r)\tilde{Y}(r)-\tilde{f}_z(r)\tilde{Z}(r)+f_x^*(r)\hat{X}(r)+f_{x_1}^*(r)\hat{X_1}(r)+f_{x_2}^*(r)\hat{X_2}(r)\\
  &\quad-f_y^*(r)[\hat{Y}(r)+\check{Y}(r)]-f_z^*(r)[\hat{Z}(r)+\check{Z}(r)]\\
  &\quad+[\hat{X}(r),\hat{X_1}(r),\hat{X_2}(r),-\hat{Y}(r)-\check{Y}(r),-\hat{Z}(r)-\check{Z}(r)]\tilde{D}^2f(r)\\
  &\qquad\times[\hat{X}(r),\hat{X_1}(r),\hat{X_2}(r),-\hat{Y}(r)-\check{Y}(r),-\hat{Z}(r)-\check{Z}(r)]^\top,
\end{aligned}\end{aligned}\end{equation*}
where
\begin{eqnarray*}\left\{\begin{aligned}
  \tilde{f}_y(r)=&\int_0^1f_y(r,X^*(r)+\hat{X}(r),X_1^*(r)+\hat{X_1}(r),X_2^*(r)+\hat{X_2}(r),Y^*(r)\\
                 &\qquad\quad-\hat{Y}(r)-\check{Y}(r)-\theta \tilde{Y}(r),Z^*(r)-\hat{Z}(r)-\check{Z}(r)-\theta\tilde{Z}(r),u^*(r))d\theta,\\
  \tilde{f}_z(r)=&\int_0^1f_z(r,X^*(r)+\hat{X}(r),X_1^*(r)+\hat{X_1}(r),X_2^*(r)+\hat{X_2}(r),Y^*(r)\\
                 &\qquad\quad-\hat{Y}(r)-\check{Y}(r)-\theta \tilde{Y}(r),Z^*(r)-\hat{Z}(r)-\check{Z}(r)-\theta\tilde{Z}(r),u^*(r))d\theta,\\
 \tilde{D}^2f(r)=&\int_0^1\int_{0}^{1}\lambda D^2f(r,X^*(r)+\lambda\theta \hat{X}(r),X_1^*(r)+\lambda\theta\hat{X_1}(r),X_2^*(r)+\lambda\theta \hat{X_2}(r),\\
                 &\qquad\quad Y^*(r)-\lambda\theta\hat{Y}(r)-\lambda\theta\check{Y}(r),Z^*(r)-\lambda\theta\hat{Z}(r)-\lambda\theta\check{Z}(r),u^*(r))d\lambda d\theta.
\end{aligned}\right.\end{eqnarray*}
Hence, we obtain
\begin{equation*}\begin{aligned}
\tilde{Y}(t)&=\tilde{Y}(T)+\int_{t}^{T}\Big\{\tilde{f}_y(r)\tilde{Y}(r)+\tilde{f}_z(r)\tilde{Z}(r)+\tilde{p}(r)\varepsilon_1(r)+[\tilde{p}(r)f_z^*(r)+\tilde{q}(r)]\varepsilon_2(r)\\
            &\qquad+\hat{X_2}(r)[\tilde{p}(r)b_{x_2}^*(r)+\tilde{q}(r)\sigma_{x_2}^*(r)-e^{-\lambda\delta}\check{p}(r)+\tilde{p}(r)\sigma_{x_2}^*(r)f_z^*(r)-f_{x_2}^*(r)]\\
            &\qquad-[\hat{X}(r),\hat{X_1}(r),\hat{X_2}(r),-\hat{Y}(r)-\check{Y}(r),-\hat{Z}(r)-\check{Z}(r)]\tilde{D}^2f(r)\\
            &\qquad\times[\hat{X}(r),\hat{X_1}(r),\hat{X_2}(r),-\hat{Y}(r)-\check{Y}(r),-\hat{Z}(r)-\check{Z}(r)]^\top\Big\}dr-\int_t^T\tilde{Z}(r)dW(r).
\end{aligned}\end{equation*}
Applying It\^o's formula to $p_3(r)(\gamma(r))^{-1}$, we have
\begin{equation*}\begin{aligned}
  d[p_3(r)(\gamma(r))^{-1}]=&\big\{-\tilde{p}(r)b_{x_2}^*(r)-\tilde{q}(r)\sigma_{x_2}^*(r)+e^{-\lambda\delta}\check{p}(r)-\tilde{p}(r)\sigma_{x_2}^*(r)f_z^*(r)\\
                            &\ +f_{x_2}^*(r)-p_3(r)(\gamma(r))^{-1}f_y^*(t)+p_3(r)(\gamma(r))^{-1}|f_z^*(r)|^2\big\}dr\\
                            &-p_3(r)(\gamma(r))^{-1}f_z^*(r)dW(r).
\end{aligned}\end{equation*}
Substituting $p_3(r)\equiv 0$ into the above equality, we get
\begin{equation*}\begin{aligned}
  0=&\int_{t+\delta}^T\big\{-\tilde{p}(r)b_{x_2}^*(r)-\tilde{q}(r)\sigma_{x_2}^*(r)+e^{-\lambda\delta}\check{p}(r)-\tilde{p}(r)\sigma_{x_2}^*(r)f_z^*(r)+f_{x_2}^*(r)\big\}dr\\
   =&\int_t^{T-\delta}\big\{-\tilde{p}(r+\delta)b_{x_2}^*(r+\delta)-\tilde{q}(r+\delta)\sigma_{x_2}^*(r+\delta)+e^{-\lambda\delta}\check{p}(r+\delta)\\
    &\qquad\quad-\tilde{p}(r+\delta)\sigma_{x_2}^*(r+\delta)f_z^*(r+\delta)+f_{x_2}^*(r+\delta)\big\}dr.
\end{aligned}\end{equation*}
Recalling (\ref{eq3.4}), we have
\begin{equation*}\begin{aligned}
  \hat{X}_2(T)&=\hat{X}(t)+\int_t^{T-\delta}\big\{b^*_x(r)\hat{X}(r)+b_{x_1}^*(r)\hat{X_1}(r)+b_{x_2}^*(r)\hat{X_2}(r)+\varepsilon_1(r)\big\}dr\\
              &\quad+\int_t^{T-\delta}\big\{\sigma_x^*(r)\hat{X}(r)+\sigma_{x_1}^*(r)\hat{X_1}(r)+\sigma^*_{x_2}(r)\hat{X_2}(r)+\varepsilon_2(r)\big\}dW(r).
\end{aligned}\end{equation*}
Apparently $\hat{X}_2(T)$ is $\mathcal{F}_{T-\delta}$-adapted, under the filtration $\mathcal{F}_{T-\delta}$, applying It\^o's formula to $0\cdot\hat{X}_2(\cdot)$, by the above two equalities we derive
\begin{equation*}
  0=\int_{t+\delta}^T\hat{X_2}(r)\big[\tilde{p}(r)b_{x_2}^*(r)+\tilde{q}(r)\sigma_{x_2}^*(r)-e^{-\lambda\delta}\check{p}(r)+\tilde{p}(r)\sigma_{x_2}^*(r)f_z^*(r)-f_{x_2}^*(r)\big]dr.
\end{equation*}
Noting $\hat{X}_2(r)=0$ for $r\in[t,t+\delta)$, we have
\begin{equation*}
  0=\int_t^T\hat{X_2}(r)\big[\tilde{p}(r)b_{x_2}^*(r)+\tilde{q}(r)\sigma_{x_2}^*(r)-e^{-\lambda\delta}\check{p}(r)+\tilde{p}(r)\sigma_{x_2}^*(r)f_z^*(r)-f_{x_2}^*(r)\big]dr.
\end{equation*}
Thus, we deduce
\begin{equation}\label{eq3.19}\begin{aligned}
\tilde{Y}(t)&=\tilde{Y}(T)+\int_t^T\Big\{\tilde{f}_y(r)\tilde{Y}(r)+\tilde{f}_z(r)\tilde{Z}(r)+\tilde{p}(r)\varepsilon_1(r)+[\tilde{p}(r)f_z^*(r)+\tilde{q}(r)]\varepsilon_2(r)\\
            &\qquad-[\hat{X}(r),\hat{X_1}(r),\hat{X_2}(r),-\hat{Y}(r)-\check{Y}(r),-\hat{Z}(r)-\check{Z}(r)]\tilde{D}^2f(r)\\
            &\qquad\times[\hat{X}(r),\hat{X_1}(r),\hat{X_2}(r),-\hat{Y}(r)-\check{Y}(r),-\hat{Z}(r)-\check{Z}(r)]^\top\Big\}dr-\int_{t}^{T}\tilde{Z}(r)dW(r).
\end{aligned}\end{equation}

$Step\ 5$. Estimate of remainder terms of BSDE.

Next we use Lemma \ref{lem3.1} to derive the the estimate of solution to the BSDE (\ref{eq3.17}). By (\ref{eq3.19}), we have
\begin{equation}\begin{aligned}\label{eq3.20}
  &\mathbb{E}^{\mathcal{F}_t^s}\Big[\sup\limits_{t\leq r\leq T}|\tilde{Y}(r)|^2\Big]\leq o(|x'-X^*(t)|^2)
   +C\mathbb{E}^{\mathcal{F}_t^s}\bigg[\bigg(\int_t^T\tilde{p}(r)\varepsilon_1(r)dr\bigg)^2\\
  &\quad+\bigg(\int_t^T[\tilde{p}(r)f_z^*(r)+\tilde{q}(r)]\varepsilon_2(r)dr\bigg)^2+\bigg(\int_t^T[\hat{X}(r),\hat{X_1}(r),\hat{X_2}(r),-\hat{Y}(r)-\check{Y}(r),\\
  &\qquad-\hat{Z}(r)-\check{Z}(r)]\tilde{D}^2f(r)[\hat{X}(r),\hat{X_1}(r),\hat{X_2}(r),-\hat{Y}(r)-\check{Y}(r),-\hat{Z}(r)-\check{Z}(r)]^Tdr\bigg)^2\bigg]\\
  &:=o(|x'-X^*(t)|^2)+I+II+III,\ \mathbb{P}\mbox{-}a.s.
\end{aligned}\end{equation}

First, applying (\ref{eq3.10}) we have
\begin{equation}\begin{aligned}\label{eq3.21}
  I&\leq C\bigg\{\mathbb{E}^{\mathcal{F}_t^s}\Big[\int_t^T|\tilde{p}(r)|^2dr\Big]^2\bigg\}^{\frac{1}{2}}
    \bigg\{\mathbb{E}^{\mathcal{F}_t^s}\Big[\int_t^T|\varepsilon_1(r)|^2dr\Big]^2\bigg\}^{\frac{1}{2}}=o(|x'-X^*(t)|^2),\ \mathbb{P}\mbox{-}a.s.
\end{aligned}\end{equation}
In the same method, by the boundedness of $f_z$, we obtain
\begin{equation}\begin{aligned}\label{eq3.22}
II&=\mathbb{E}^{\mathcal{F}_t^s}\bigg[\bigg(\int_t^T[\tilde{p}(r)f_z^*(r)+\tilde{q}(r)]\varepsilon_2(r)dr\bigg)^2\bigg]\\
  &\leq C\bigg\{\mathbb{E}^{\mathcal{F}_t^s}\Big[\int_t^T|\tilde{p}(r)|^2dr\Big]^2\bigg\}^{\frac{1}{2}}
   \bigg\{\mathbb{E}^{\mathcal{F}_t^s}\Big[\int_t^T|\varepsilon_2(r)|^2dr)\Big]^2\bigg\}^{\frac{1}{2}}\\
  &\quad+C\bigg\{\mathbb{E}^{\mathcal{F}_t^s}\Big[\int_t^T|\tilde{q}(r)|^2dr\Big]^2\bigg\}^{\frac{1}{2}}
   \bigg\{\mathbb{E}^{\mathcal{F}_t^s}\Big[\int_t^T|\varepsilon_2(r)|^2dr\Big]^2\bigg\}^{\frac{1}{2}}=o(|x'-X^*(t)|^2),\ \mathbb{P}\mbox{-}a.s.
\end{aligned}\end{equation}
Finally, recall (\ref{eq3.8}), we get
\begin{equation*}\begin{aligned}
  &\mathbb{E}^{\mathcal{F}_t^s}\bigg[\bigg(\int_t^T|\check{Z}(r)|^2dr\bigg)^{\frac{p}{2}}\bigg]
   =\mathbb{E}^{\mathcal{F}_t^s}\bigg[\bigg(\int_t^T|\check{q}(r)\hat{X_1}(r)|^2dr\bigg)^{\frac{p}{2}}\bigg]\\
  &\leq\bigg\{\mathbb{E}^{\mathcal{F}_t^s}\Big[\int_t^T|\check{q}(r)|^2dr\Big]^p\bigg\}^{\frac{1}{2}}
   \bigg\{\mathbb{E}^{\mathcal{F}_t^s}\Big[\sup\limits_{t\leq r\leq T}|\hat{X_1}(r)|^{2p}\Big]\bigg\}^{\frac{1}{2}}\leq C(|x'-X^*(t)|^p),\ \mathbb{P}\mbox{-}a.s.
\end{aligned}\end{equation*}
By the boundedness of $\sigma_x, \sigma_{x_1}, \sigma_{x_2}$ and (\ref{eq3.7}), (\ref{eq3.10}), we have
\begin{equation*}\begin{aligned}
  \mathbb{E}^{\mathcal{F}_t^s}\bigg[\bigg(\int_t^T|\hat{Z}(r)|^2dr\bigg)^{\frac{p}{2}}\bigg]\leq C(|x'-X^*(t)|^p),\ \mathbb{P}\mbox{-}a.s.
\end{aligned}\end{equation*}
Using the above two inequlities and the boundedness of $\tilde{D}^2f$, we derive
\begin{equation}\begin{aligned}\label{eq3.23}
  III&\leq C\mathbb{E}^{\mathcal{F}_t^s}\Big[\int_t^T\Big(|\hat{X}(r)|^2+|\hat{X_1}(r)|^2+|\hat{X_2}(r)|^2+|\hat{Y}(r)|^2+|\check{Y}(r)|^2\\
     &\qquad\qquad\qquad+|\hat{Z}(r)|^2+|\check{Z}(r)|^2\Big)dr\Big]^2=o(|x'-X^*(t)|^2),\ \mathbb{P}\mbox{-}a.s.
\end{aligned}\end{equation}
Hence, substituting (\ref{eq3.21}), (\ref{eq3.22}) and (\ref{eq3.23}) into (\ref{eq3.20}), the following estimate hold:
\begin{equation}\begin{aligned}\label{eq3.24}
  \mathbb{E}^{\mathcal{F}_t^s}\Big[\sup\limits_{t\leq r\leq T}|\tilde{Y}(r)|\Big]=o(|x'-X^*(t)|),\ \mathbb{P}\mbox{-}a.s.
\end{aligned}\end{equation}

$Step\ 6$. Completion of the proof.

Suppose $x'\in \mathbf{R}$ is rational. Since they are countable, we can find a subset $\Omega_0\subseteq \Omega$ with $\mathbb{P}(\Omega_0)=1$ such that for any $\omega_0\in\Omega_0$,
\begin{equation*}\left\{\begin{aligned}
  &V(t,X^*(t,\omega_0),X_1^*(t,\omega_0))=-Y^*(t,\omega_0),\ (\ref{eq3.7}),\ (\ref{eq3.8}),\ (\ref{eq3.9}),\ (\ref{eq3.10}),\ (\ref{eq3.13}),\\
  &\ (\ref{eq3.14}),\ (\ref{eq3.18}),\ (\ref{eq3.20}),\ (\ref{eq3.24})\mbox{ are satisfied for any rational }x',\\
  &\ (\Omega,\mathcal{F},\mathbb{P}(\cdot|\mathcal{F}_t^{s})(\omega_0),W(\cdot)-W(t);u(\cdot)|_{[t,T]})\in\mathcal{U}^\omega[t,T].
\end{aligned}\right.\end{equation*}
The first relation can be obtained by the DPP (see \cite{CW12}). Let $\omega_0\in\Omega_0$ be fixed, and for any rational $x'\in\mathbf{R}$, by (\ref{eq3.24}), we have
\begin{equation}\label{eq3.25}
  \tilde{Y}(t,\omega_0)=o(|x'-X^*(t,\omega_0)|),\mbox{ for all } t\in[s,T].
\end{equation}
By the definition of $\tilde{Y}(\cdot)$, we have
\begin{equation}\nonumber\begin{aligned}
  -Y^{t,x',X_1^*(t)}(t,\omega_0)+Y^*(t,\omega_0)\leq\tilde{p}(t,\omega_0)(x'-X^*(t,\omega_0))+o(|x'-X^*(t,\omega_0)|),
\end{aligned}\end{equation}
for all $t\in[s,T]$. Thus
\begin{equation}\label{eq3.26}\begin{aligned}
  &V(t,x',X_1^*(t,\omega_0))-V(t,X^*(t,\omega_0),X_1^*(t,\omega_0))\leq-Y^{t,x',X_1^*(t)}(t,\omega_0)+Y^*(t,\omega_0)\\
  &\leq p_1(t,\omega_0)(\gamma(t,\omega_0))^{-1}(x'-X^*(t,\omega_0))+o(|x'-X^*(t,\omega_0)|),\mbox{ for all } t\in[s,T].
\end{aligned}\end{equation}
The above term $o(|x'-X^*(t,\omega_0)|)$ depends only on the size of $|x'-X^*(t,\omega_0)|$, and is independent of $x'$. Therefore, by the continuity of $V(t,\cdot,\cdot)$, (\ref{eq3.26}) holds for all $x'\in\mathbf{R}$, which proves
\begin{eqnarray}\nonumber
  \big\{p_1(t)(\gamma(t))^{-1}\big\}\subseteq D^{1,+}_{x}V(t,X^*(t),X_1^*(t)),\mbox{ for all } t\in[s,T],\ \mathbb{P}\mbox{-}a.s.
\end{eqnarray}
Finally, fix an $\omega\in\Omega$ such that (\ref{eq3.26}) holds for any $x'\in\mathbf{R}$. For any $p\in D^{1,-}_{x}V(t,X^*(t),X_1^*(t))$, by definition of $D^{1,-}_{x}V(t,X^*(t),X_1^*(t))$ we have
\begin{equation}\nonumber
  \varliminf_{x'\rightarrow X^*(t)}\frac{V(t,x',X_1^*(t))-V(t,X^*(t),X_1^*(t))-p(x'-X^*(t))}{|x'-X^*(t)|}\geq 0.
\end{equation}
Since $p_1(t)(\gamma(t))^{-1}\in D^{1,+}_{x}V(t,X^*(t),X_1^*(t))$, we obtain
\begin{equation}\nonumber
  \varliminf_{x'\rightarrow X^*(t)}\frac{(p-p_1(t)(\gamma(t))^{-1})(x'-X^*(t))}{|x'-X^*(t)|}\leq 0.
\end{equation}
Thus, it is necessary that
\begin{equation}\nonumber
  p=p_1(t)(\gamma(t))^{-1},\mbox{ for all } t\in[s,T],\ \mathbb{P}\mbox{-}a.s.
\end{equation}
Thus, the second inclusion of (\ref{eq3.1}) holds. The proof is complete.
\end{proof}

\noindent\textbf{Remark 4.1.} If the value function is smooth, then by Theorem \ref{th3.1} we have
\begin{equation*}
  V_x(t,X^*(t),X_1^*(t))=p_1(t)(\gamma(t))^{-1}.
\end{equation*}
This conclusion is consistent with the result of Theorem 3.2 in \cite{SXZ15}.

\noindent\textbf{Remark 4.2.} It is worth mentioning that in the same manner we can not deduce that
\begin{equation*}
   \big\{p_2(t)(\gamma(t))^{-1}\big\}\subseteq D_{x_1}^{1,+}V(t,X^*(t),X_1^*(t)),
\end{equation*}
because the condition $\hat{X}(r)=0$ for $r\in[t-\delta,t)$ plays a crucial role in the proof. Without this condition, we can not deduce (\ref{eq3.19}).

\section{Stochastic Verification Theorem}

In this section, we give the verification theorem of \textbf{Problem (P)} in the framework of viscosity solution, which can help us verify whether an admissible pair is the optimal pair. Before stating the main theorem, we first recall a basic result given in \cite{CW12}.

We consider the special version of FBSDDE (\ref{eq2.1}):
\begin{eqnarray}\label{eq5.1}\left\{\begin{aligned}
  dX^{s,\varphi;u}(t)&=b(t,X^{s,\varphi;u}(t),X_1^{s,\varphi;u}(t),X_2^{s,\varphi;u}(t),u(t))dt\\
                     &\quad+\sigma(t,X^{s,\varphi;u}(t),X_1^{s,\varphi;u}(t),X_2^{s,\varphi;u}(t),u(t))dW(t),t\in[s,T],\\
 -dY^{s,\varphi;u}(t)&=\big[a(t,X^{s,\varphi;u}(t),X_1^{s,\varphi;u}(t),X_2^{s,\varphi;u}(t),u(t))\\
                     &\qquad+f(t)Y^{s,\varphi;u}(t)+g(t)Z^{s,\varphi;u}(t)\big]dt-Z^{s,\varphi;u}(t)dW(t),\ t\in[s,T],\\
   X^{s,\varphi;u}(t)&=\varphi(t-s),\ t\in[s-\delta,s],\\
   Y^{s,\varphi;u}(T)&=\phi(X^{s,\varphi;u}(T),X_1^{s,\varphi;u}(T)),
\end{aligned}\right.\end{eqnarray}
where $b,\sigma,\varphi$ satisfy \textbf{(H1)}, $a:[0,T]\times\mathbf{R}\times\mathbf{R}\times\mathbf{R}\times\textbf{U}\rightarrow\mathbf{R}$ and $\phi$ satisfies \textbf{(H2)}. Moreover, $f(\cdot), g(\cdot)$ are given uniformly bounded deterministic function. Then we have the following result.
\begin{mypro}\label{prop 5.1}
Consider \textbf{Problem (P)} with the state equation (\ref{eq5.1}), then the value function $V$ defined by (\ref{eq2.3}) is the unique viscosity solution to the following HJB equation in the class of continuous function with at most a polynomially growth:
\begin{eqnarray}\label{eq5.2}\left\{\begin{aligned}
 -V_s(s,x,x_1)&+\underset{u\in\mathbf{U}}{\sup}\ \bar{G}\big(s,x,x_1,x_2,u,-V(s,x,x_1),-V_x(s,x,x_1),\\
              &\qquad\qquad-V_{xx}(s,x,x_1),-V_{x_1}(s,x,x_1)\big)=0,\ (s,x,x_1)\in[0,T]\times\mathbf{R}^2,\\
    V(T,x,x_1)&=-\phi(x,x_1),\ \mbox{for all }x,x_1\in\mathbf{R},
\end{aligned}\right.\end{eqnarray}
where the generalized Hamiltonian function $\bar{G}:[0,T]\times\mathbf{R}\times\mathbf{R}\times\mathbf{R}\times\mathbf{U}\times\mathbf{R}\times\mathbf{R}\times\mathbf{R}\times\mathbf{R}\rightarrow \mathbf{R}$ is defined as
\begin{equation}\label{eq5.3}\begin{split}
&\bar{G}(s,x,x_1,x_2,u,k,p,R,q):=\frac{1}{2}R\sigma^2(s,x,x_1,x_2,u)+p\big[b(s,x,x_1,x_2,u)\\
&\quad+\sigma(s,x,x_1,x_2,u)g(s)\big]+q(x-\lambda x_1-e^{-\lambda\delta }x_2)+a(s,x,x_1,x_2,u)+f(s)k.
\end{split}\end{equation}
\end{mypro}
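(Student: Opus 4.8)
The plan is to establish that $V$ is a viscosity solution to \eqref{eq5.2} by verifying separately the subsolution and supersolution properties, and then invoke a comparison principle to get uniqueness. The essential tool is the dynamic programming principle (DPP) for \textbf{Problem (P)} with state equation \eqref{eq5.1}, which, following Chen and Wu \cite{CW12}, reads
\begin{equation*}\begin{aligned}
  V(s,\varphi)=\underset{u(\cdot)\in\mathcal{U}^\omega[s,T]}{\essinf}\ \bigg\{&-\int_s^{\hat s}\big[a(r,X^{s,\varphi;u}(r),X_1^{s,\varphi;u}(r),X_2^{s,\varphi;u}(r),u(r))\\
  &+f(r)Y^{s,\varphi;u}(r)+g(r)Z^{s,\varphi;u}(r)\big]dr+V(\hat s,X^{s,\varphi;u}_{\hat s})\bigg\}
\end{aligned}\end{equation*}
for all $s\le\hat s\le T$, where $X^{s,\varphi;u}_{\hat s}$ denotes the segment. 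The first step is therefore to state the DPP and then to observe — exactly as in Remark 3.2, and as is the standing assumption of the section — that under the imposed structural conditions on $b,\sigma,a$ the value function does not depend on the $x_2$-component, so that $V(s,\varphi)=V(s,x,x_1)$ with $x=\varphi(0)$, $x_1=\int_{-\delta}^0 e^{\lambda\tau}\varphi(\tau)d\tau$. This reduces the analysis to the finite-dimensional state $(x,x_1)$ and allows the test function $\varphi\in C^{1,2,1}([0,T]\times\mathbf R^2)$ of Definition \ref{def1} to be used.

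Next I would prove the subsolution property. Fix $(t_0,x_0,x_0')$ at which $V-\varphi$ attains a local maximum in a right neighborhood, with $\varphi\in C^{1,2,1}$, and WLOG $V(t_0,x_0,x_0')=\varphi(t_0,x_0,x_0')$. Choose the initial segment $\phi_0\in C([-\delta,0];\mathbf R)$ with $\phi_0(0)=x_0$ and $\int_{-\delta}^0 e^{\lambda\tau}\phi_0(\tau)d\tau=x_0'$, apply the DPP on $[t_0,t_0+h]$ with an arbitrary constant control $u\in\mathbf U$, use $V\le\varphi$ near $(t_0,x_0,x_0')$ and $V(t_0,\cdot)=\varphi(t_0,\cdot)$, apply It\^o's formula to $\varphi(r,X^{t_0,\phi_0;u}(r),X_1^{t_0,\phi_0;u}(r))$ (noting $\frac{d}{dr}X_1(r)=X(r)-\lambda X_1(r)-e^{-\lambda\delta}X_2(r)$), divide by $h$ and let $h\downarrow0$. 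The continuity/boundedness of the coefficients and the a priori estimate of Lemma \ref{lem2.2} (together with the linear BSDE estimate, which controls the $Y,Z$ contributions through $f,g$) give that the $\frac1h\int_{t_0}^{t_0+h}$ averages converge to their integrand values at $t_0$; the contribution of the running cost $a$ and of the $f(s)k$ and $p\sigma g(s)$ terms assembles precisely into $\bar G$ as in \eqref{eq5.3}. This yields $-\varphi_t+\bar G(\cdots)\le 0$ at $(t_0,x_0,x_0')$; since this holds for the unique relevant $x_2=\phi_0(-\delta)$ but $V$ and the left side do not depend on $x_2$, it holds for all $x_2\in\mathbf R$, giving the required inequality with the supremum over $u$. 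The supersolution property is analogous: one uses the $\essinf$ in the DPP to pick, for each $h$, a near-optimal control $u^h(\cdot)$, runs the same It\^o expansion, and obtains the reverse inequality; the minor extra care is that one only controls an average over the near-optimal control, so one needs the coefficients' continuity in $u$ and an $\varepsilon/h$ argument.

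Uniqueness in the class of continuous functions of at most polynomial growth follows from a comparison theorem for \eqref{eq5.2}: if $v_1$ is a viscosity subsolution and $v_2$ a viscosity supersolution, both with polynomial growth and $v_1(T,\cdot)\le v_2(T,\cdot)$, then $v_1\le v_2$. I would reduce this, via Lemma \ref{lem3.2}, to the jet formulation and run the standard doubling-of-variables argument (Crandall--Ishii--Lions \cite{CIL92}, Yong--Zhou \cite{YZ99}): the first-order term $q(x-\lambda x_1-e^{-\lambda\delta}x_2)$ is linear, hence harmless, the $\frac12 R\sigma^2$ term is handled by the matrix inequality from Ishii's lemma using the Lipschitz property of $\sigma$, and the dependence of $a$ and $f(s)k$ on $k$ is Lipschitz, so a change of variable $\tilde v_i=e^{-Kt}v_i$ removes the bad-sign zeroth-order term; polynomial growth lets one localize with a suitable penalization $\mu(|x|^{2m}+|x_1|^{2m})$. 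The main obstacle I anticipate is bookkeeping rather than conceptual: (i) making the DPP-plus-It\^o passage to the limit rigorous while the running cost secretly contains $Y$ and $Z$ — i.e.\ showing $\frac1h\int_{t_0}^{t_0+h} g(r)Z^{t_0,\phi_0;u}(r)\,dr\to g(t_0)\,(-V_x)\sigma(t_0,x_0,x_0',x_2,u)$, which needs the identification $Z(r)= -V_x(\cdots)\sigma(\cdots)$ valid only in the smooth case and must instead be argued directly from the BSDE representation and the estimate in Lemma \ref{lem3.1}; and (ii) the handling of the frozen but irrelevant variable $x_2$, which is why the "$\forall x_2\in\mathbf R$'' appears in Definition \ref{def1} and why the standing assumption that $V$ is $x_2$-independent is indispensable. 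Everything else is a routine adaptation of the classical viscosity-solution machinery to the delayed setting, with Lemma \ref{lem2.2}, Lemma \ref{lem2.4} and Lemma \ref{lem3.1} supplying the needed estimates.
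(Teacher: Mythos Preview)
The paper does not prove this proposition: it is stated as ``a basic result given in \cite{CW12}'' and is simply recalled from Chen--Wu. So there is no proof in the paper to compare against; your proposal supplies an argument where the paper just cites.

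Your outline (DPP $\Rightarrow$ sub/supersolution via It\^o on a test function $\Rightarrow$ uniqueness by comparison) is the standard template, and it is essentially what \cite{CW12} does. But there is one genuine gap that you yourself flag and do not close: the treatment of the $g(r)Z^{t_0,\phi_0;u}(r)$ term. Your DPP is written as
\[
V(s,\varphi)=\essinf_u\Big\{-\int_s^{\hat s}\big[a+fY+gZ\big]\,dr+V(\hat s,X^{s,\varphi;u}_{\hat s})\Big\},
\]
but for recursive (BSDE) cost functionals this is not the right formulation: the correct DPP in \cite{CW12} uses Peng's \emph{backward semigroup}
\[
V(s,\varphi)=\essinf_{u}G^{s,\hat s}_{s,\varphi;u}\big[-V(\hat s,X^{s,\varphi;u}_{\hat s})\big],
\]
where $G^{s,\hat s}_{s,\varphi;u}[\eta]$ is the time-$s$ value of the BSDE on $[s,\hat s]$ with terminal datum $\eta$. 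With this formulation one never needs to ``identify'' $Z$ with $-V_x\sigma$; instead one applies the BSDE comparison theorem between the true BSDE and the one whose terminal value is the test function $\varphi(\hat s,X(\hat s),X_1(\hat s))$, then expands $\varphi$ by It\^o. The $gZ$ contribution is absorbed automatically, because it is the BSDE---not an integral representation---that is being compared. Your attempt to recover $p\sigma g$ by arguing $\frac1h\int_{t_0}^{t_0+h}gZ\to g(-V_x)\sigma$ cannot be made rigorous in the non-smooth setting (Lemma~\ref{lem3.1} gives $L^2$ bounds on $Z$, not pointwise convergence to anything involving $V_x$), and this is precisely why the backward-semigroup route is used.

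A second, smaller point: in the subsolution step you say ``since this holds for the unique relevant $x_2=\phi_0(-\delta)$ but $V$ and the left side do not depend on $x_2$, it holds for all $x_2\in\mathbf R$.'' The \emph{left side} $-\varphi_t+\bar G(\ldots)$ \emph{does} depend on $x_2$ through $b,\sigma,a$ and the $e^{-\lambda\delta}x_2$ term; the way this is handled in \cite{CW12} (and implicitly here via Remark~3.2) is that the structural conditions making $V$ independent of $x_2$ also make the Hamiltonian effectively independent of $x_2$, so the quantifier ``$\forall x_2$'' in Definition~\ref{def1} is vacuous. Your argument as written does not justify the quantifier.

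Your uniqueness sketch is fine and matches the Crandall--Ishii--Lions machinery used in \cite{CW12} and \cite{YZ99}.
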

Next we give a lemma which can be used in the proof of Theorem \ref{thm5.1}.
\begin{mylem}\label{lem5.1} \emph{(see \cite{YZ99})}
Let $\lambda\in C[0,T]$. Extend $\lambda$ to $(-\infty,+\infty)$ with $\lambda(t)=\lambda(T)$ for $t>T$, and $\lambda(t)=\lambda(0)$ for $t<0$. Suppose there is a $\rho\in L^1(0,T;\mathbf{R})$ and some $h_0>0$ such that
\begin{equation*}
  \frac{\lambda(t+h)-\lambda(t)}{h}\leq\rho(t),\ a.e.\ t\in [0,T],\ h\leq h_0.
\end{equation*}
Then
\begin{equation*}
  \lambda(\beta)-\lambda(\alpha)\leq\int_\alpha^\beta\underset{h\rightarrow 0+}\varlimsup\frac{\lambda(t+h)-\lambda(t)}{h}dt,\ \forall\ 0\leq\alpha\leq\beta\leq T.
\end{equation*}
\end{mylem}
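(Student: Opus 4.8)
The plan is to bound the increment $\lambda(\beta)-\lambda(\alpha)$ by the average of forward difference quotients and then pass to the limit through a one-sided (reverse) Fatou lemma, using the hypothesis $g_h\le\rho\in L^1$ as the required domination from above. For $0<h\le h_0$ write $g_h(t):=\big(\lambda(t+h)-\lambda(t)\big)/h$, which is well defined and continuous (hence measurable) on $[0,T]$ precisely because $\lambda$ has been extended constantly past $T$. First I would record the elementary identity
\[
  \int_\alpha^\beta g_h(t)\,dt=\frac1h\Big(\int_\beta^{\beta+h}\lambda(s)\,ds-\int_\alpha^{\alpha+h}\lambda(s)\,ds\Big),
\]
obtained from the substitution $s=t+h$ in $\int_\alpha^\beta\lambda(t+h)\,dt$ and cancellation of the common piece $\int_{\alpha+h}^{\beta}\lambda$. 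Since $\lambda$ is continuous (at $\alpha$ and $\beta$, the point $\beta=T$ being handled by the extension), the right-hand side tends to $\lambda(\beta)-\lambda(\alpha)$, so $\int_\alpha^\beta g_h(t)\,dt\to\lambda(\beta)-\lambda(\alpha)$ as $h\to0+$.

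Next I would invoke the hypothesis: for a.e.\ $t\in[\alpha,\beta]\subseteq[0,T]$ and all $h\le h_0$ one has $g_h(t)\le\rho(t)$ with $\rho\in L^1$. Taking any sequence $h_n\downarrow0$, the functions $g_{h_n}$ are dominated from above by the single integrable function $\rho$, so the reverse Fatou lemma yields
\[
  \lambda(\beta)-\lambda(\alpha)=\lim_{n\to\infty}\int_\alpha^\beta g_{h_n}(t)\,dt
  \le\int_\alpha^\beta\varlimsup_{n\to\infty}g_{h_n}(t)\,dt
  \le\int_\alpha^\beta\varlimsup_{h\to0+}g_h(t)\,dt,
\]
where the last step uses that a sequential upper limit never exceeds the full upper limit along $h\to0+$. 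Here I would also note that $t\mapsto\varlimsup_{h\to0+}g_h(t)$ is measurable: for each fixed $t$ the map $h\mapsto g_h(t)$ is continuous on $(0,h_0]$, so this upper limit may be computed along rational $h$ and is therefore a countable upper limit of measurable functions, which is what makes the right-hand integral meaningful.

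The step I expect to require the most care is exactly this measurability and continuum-to-sequence bookkeeping: the reverse Fatou lemma is stated for sequences, so one must first pass to $h_n\downarrow0$, and one must separately argue that the limiting integrand is measurable and that its integral (a priori valued in $[-\infty,+\infty)$, since only an upper bound on $g_h$ is available) is well defined — the conclusion then reads ``a finite quantity $\le$ an integral that makes sense,'' which is the asserted inequality, for all $0\le\alpha\le\beta\le T$. Verifying that the constant extension of $\lambda$ beyond $[0,T]$ is what keeps $g_h$ and the boundary terms $\tfrac1h\int_\beta^{\beta+h}\lambda$ well behaved near $t=T$ is a minor but necessary point; the remainder is a routine change of variables together with the continuity argument above, so no further estimates are needed.
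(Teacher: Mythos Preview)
Your argument is correct: the averaging identity together with continuity gives $\int_\alpha^\beta g_h\to\lambda(\beta)-\lambda(\alpha)$, and the upper bound $g_h\le\rho\in L^1$ licenses the reverse Fatou step; the measurability bookkeeping you flag is handled exactly as you describe. The paper does not supply its own proof of this lemma (it simply cites \cite{YZ99}), and your proof is precisely the standard one found there, so there is nothing further to compare.
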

The main theorem in this section is as follows.
\begin{mythm}\label{thm5.1}
Suppose $b,\sigma,\varphi$ satisfy \textbf{(H1)}, $a,\phi$ satisfy \textbf{(H2)}, $g(\cdot)\equiv 0$ and $f(\cdot)$ is a given uniformly bounded deterministic function. Let $V\in C([0,T]\times\mathbf{R}^2)$ be the viscosity solution to the following HJB equation satisfying $|V(t,x,x_1)|\leq C(1+|x|^k+|x_1|^k)$, for some $k\geq1,(t,x,x_1)\in(0,T)\times\mathbf{R}^2$:
\begin{eqnarray*}\left\{\begin{aligned}
 -V_s(s,x,x_1)&+\underset{u\in\mathbf{U}}{\sup}\ G(s,x,x_1,x_2,u,-V(s,x,x_1),-V_x(s,x,x_1),\\
              &-V_{xx}(s,x,x_1),-V_{x_1}(s,x,x_1))=0,\ (s,x,x_1)\in[0,T]\times\mathbf{R}^2,\\
    V(T,x,x_1)&=-\phi(x,x_1),\ \mbox{for all }x,x_1\in\mathbf{R},
\end{aligned}\right.\end{eqnarray*}
where the generalized Hamiltonian function $\tilde{G}:[0,T]\times\mathbf{R}\times\mathbf{R}\times\mathbf{R}\times\mathbf{U}\times\mathbf{R}\times\mathbf{R}\times\mathbf{R}\times\mathbf{R}\rightarrow \mathbf{R}$ is defined as
\begin{equation}\begin{split}
\tilde{G}(s,x,x_1,x_2,u,k,p,R,q)&:=\frac{1}{2}R\sigma^2(s,x,x_1,x_2,u)+pb(s,x,x_1,x_2,u)\\
                                &\quad+q(x-\lambda x_1-e^{-\lambda\delta }x_2)+a(s,x,x_1,x_2,u)+f(s)k.
\end{split}\end{equation}
Then we have
\begin{equation}\label{eq5.4}
  V(s,x,x_1)\leq J(s,\varphi;u(\cdot)),\mbox{ for all }(s,\varphi)\in[0,T]\times C([-\delta,0];\mathbf{R})\mbox{ and any }u(\cdot)\in\mathcal{U}^{w}(s,T).
\end{equation}
Furthermore, let $(s,x,x_1,x_2)\in[0,T]\times\mathbf{R}^3$ be fixed, suppose $(X^*(\cdot),Y^*(\cdot),Z^*(\cdot),u^*(\cdot))$ is an admissible pair such that there exists a quadruple $(\Theta,p,q,P)\in L_\mathcal{F}^2(s,T;\mathbf{R})\times L_\mathcal{F}^2(s,T;\mathbf{R})\times L_\mathcal{F}^2(s,T;\mathbf{R})\times L_\mathcal{F}^2(s,T;\mathbf{R})$ satisfying
 \begin{equation}\label{eq5.5}
   (\Theta,p,q,P)\in D_{t+,x}^{1,2,1,+}V(t,X^*(t),X_1^*(t)),\mbox{ for }a.e.\ t\in[s,T],\ \mathbb{P}\mbox{-}a.s.,
   \end{equation}
and
\begin{equation}\label{eq5.6}\begin{aligned}
  &\mathbb{E}\int_s^T\big[\Theta(t)-\tilde{G}\big(t,X^*(t),X_1^*(t),X^*_2(t),u^*(t),-V(t,X^*(t),X_1^*(t)),\\
  &\qquad\quad-p(t),-P(t),-q(t)\big)\big]dt\leq 0.
\end{aligned}\end{equation}
Then $(X^*(\cdot),Y^*(\cdot),Z^*(\cdot),u^*(\cdot))$ is the optimal pair.
\end{mythm}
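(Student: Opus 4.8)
The argument naturally divides into two parts, corresponding to the two assertions \eqref{eq5.4} and the optimality of $(X^*(\cdot),Y^*(\cdot),Z^*(\cdot),u^*(\cdot))$. For the first part, I would fix an arbitrary admissible control $u(\cdot)\in\mathcal{U}^{w}(s,T)$ with its state trajectory, and exploit the fact that $V$ is a viscosity supersolution of the HJB equation together with the fact (Proposition \ref{prop 5.1}, specialized to $g\equiv 0$) that the value function $\bar V$ is the \emph{unique} viscosity solution in the polynomial-growth class. Since $V$ has polynomial growth and satisfies the same equation and terminal condition, uniqueness gives $V=\bar V$ on $[0,T]\times\mathbf{R}^2$, and then $V(s,x,x_1)=\bar V(s,\varphi)=\essinf_u J(s,\varphi;u(\cdot))\le J(s,\varphi;u(\cdot))$, which is exactly \eqref{eq5.4}. (If one prefers not to route through the comparison/uniqueness theorem, the same inequality can be obtained directly by a sub/supersolution argument: apply Lemma \ref{lem3.2} to produce smooth test functions touching $V$ from below along the trajectory, feed them into the BSDE for $Y^{s,\varphi;u}$, and use Gronwall together with the terminal condition $V(T,\cdot,\cdot)=-\phi$; I would present the uniqueness route as the clean one and mention the direct route as an alternative.)

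For the second part I would show that for the candidate optimal pair the inequality in \eqref{eq5.4} is in fact an equality, i.e.\ $V(s,x,x_1)=J(s,\varphi;u^*(\cdot))=-Y^*(s)$, which combined with the first part forces $u^*(\cdot)$ to be optimal. The key device is to track the process $t\mapsto V(t,X^*(t),X_1^*(t))$ along the optimal trajectory. Because $V$ is only continuous, I cannot apply It\^o's formula directly; instead I use the super-jet membership \eqref{eq5.5}: for a.e.\ $t$, $(\Theta(t),p(t),q(t),P(t))\in D_{t+,x}^{1,2,1,+}V(t,X^*(t),X_1^*(t))$ gives, via Lemma \ref{lem3.2}, a $C^{1,2,1}$ test function $\phi^t$ that touches $V$ from above at that point with the prescribed derivatives. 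Applying It\^o to $\phi^t(r,X^*(r),X_1^*(r))$ near $r=t$ and using $d X_1^*(r)=[X^*(r)-\lambda X_1^*(r)-e^{-\lambda\delta}X_2^*(r)]dr$, one reads off a one-sided Dini-derivative estimate of the form
\[
\varlimsup_{h\to 0+}\frac{1}{h}\,\mathbb{E}^{\mathcal{F}_s^s}\big[V(t+h,X^*(t+h),X_1^*(t+h))-V(t,X^*(t),X_1^*(t))\big]\le \Theta(t)+\frac12 P(t)\sigma^{*2}(t)+p(t)b^*(t)+q(t)\big(X^*(t)-\lambda X_1^*(t)-e^{-\lambda\delta}X_2^*(t)\big),
\]
where $\sigma^*(t)=\sigma(t,X^*(t),X_1^*(t),X_2^*(t),u^*(t))$, etc. Then I bring in Lemma \ref{lem5.1} (the Dini-to-integral lemma of Yong–Zhou) to integrate this differential inequality in $t$, after verifying the required one-sided $L^1$ domination of the difference quotients (this follows from the moment estimates of Lemma \ref{lem2.2} and Lemma \ref{lem3.1} applied to $X^*,Y^*,Z^*$). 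Evaluating at the endpoints $\alpha=s$, $\beta=T$ and using $V(T,X^*(T),X_1^*(T))=-\phi(X^*(T),X_1^*(T))=Y^*(T)$ yields
\[
\begin{aligned}
-Y^*(s)-\mathbb{E}^{\mathcal{F}_s^s}[Y^*(T)]\le{}&\mathbb{E}^{\mathcal{F}_s^s}\!\int_s^T\!\Big[\Theta(t)+\tfrac12 P(t)\sigma^{*2}(t)+p(t)b^*(t)\\
&+q(t)\big(X^*(t)-\lambda X_1^*(t)-e^{-\lambda\delta}X_2^*(t)\big)\Big]dt.
\end{aligned}
\]
On the other hand, from the $Y$-equation in \eqref{eq5.1} with $g\equiv 0$ one has $\mathbb{E}^{\mathcal{F}_s^s}[Y^*(s)-Y^*(T)]=\mathbb{E}^{\mathcal{F}_s^s}\int_s^T[a^*(t)+f(t)Y^*(t)]dt$, i.e.\ using $-Y^*(t)=V(t,X^*(t),X_1^*(t))$ (which itself follows from \eqref{eq5.4} being tight — this point has to be handled carefully, see below), $f(t)Y^*(t)=-f(t)V(t,X^*(t),X_1^*(t))$. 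Substituting and recognizing the definition \eqref{eq5.3}–\eqref{eq5.11} of $\tilde G$, the right-hand side collapses to $\mathbb{E}^{\mathcal{F}_s^s}\int_s^T[\Theta(t)-\tilde G(t,X^*(t),X_1^*(t),X_2^*(t),u^*(t),-V,-p,-P,-q)]dt$ plus a term that vanishes, and hypothesis \eqref{eq5.6} makes this $\le 0$; combined with \eqref{eq5.4} this pins $V(s,x,x_1)=-Y^*(s)=J(s,\varphi;u^*(\cdot))$.

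The main obstacle, and the place I would spend the most care, is the chicken-and-egg issue just flagged: the clean substitution of $-Y^*(t)$ for $V(t,X^*(t),X_1^*(t))$ inside the generator requires knowing $V\equiv -Y^*$ along the optimal trajectory, which is what we are trying to prove. The way around this is to run the estimate with the true process $V(t,X^*(t),X_1^*(t))$ kept in place of $-Y^*(t)$ wherever $\tilde G$'s $k$-slot appears, so that the Dini/Lemma \ref{lem5.1} computation produces an inequality involving $\int_s^T f(t)\,V(t,X^*(t),X_1^*(t))\,dt$ on one side and $\int_s^T f(t)(-Y^*(t))\,dt$ on the other (from the $Y^*$ equation), and then close the loop with a Gronwall argument on the nonnegative function $t\mapsto V(t,X^*(t),X_1^*(t))+Y^*(t)$ — nonnegativity being exactly \eqref{eq5.4} applied at every intermediate time $t$ (via the flow property / DPP for $V$, as in \cite{CW12}). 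A secondary technical point is the measurable-selection/regularity issue in turning the pointwise-in-$t$ super-jet membership \eqref{eq5.5} into an honest differential inequality valid for a.e.\ $t$ with a uniform $L^1(0,T)$ dominating function for the difference quotients; this is handled exactly as in the non-delayed case in Yong–Zhou \cite{YZ99} using the polynomial-growth bound on $V$ and the fourth-moment estimates from \textbf{(H1)}, \textbf{(H2)} via Lemmas \ref{lem2.2} and \ref{lem3.1}, with the only new ingredient being control of $X_1^*$ and $X_2^*$ in terms of $X^*$, which is already carried out in \eqref{eq3.8}–\eqref{eq3.9}.
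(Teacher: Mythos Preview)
Your plan is essentially the paper's proof: part one via the uniqueness in Proposition~\ref{prop 5.1}, part two via Lemma~\ref{lem3.2} (test function from the super-jet), It\^o along the trajectory to get a one-sided Dini estimate, Lemma~\ref{lem5.1} to integrate it, then assumption~\eqref{eq5.6} and a Gronwall argument on $Y^*(t)+V(t,X^*(t),X_1^*(t))$ to close. Two details to tighten when you write it out: the conditioning in the Dini estimate must be on $\mathcal{F}^s_{t}$ (the paper works under the regular conditional probability $\mathbb{P}(\cdot|\mathcal{F}^s_{t_0})(\omega_0)$ and then passes to $\mathbb{E}$ via Fatou), not on $\mathcal{F}^s_s$; and the sign from \eqref{eq5.4} gives $V(t,X^*(t),X_1^*(t))+Y^*(t)\le 0$ (not $\ge 0$), with $V(T,X^*(T),X_1^*(T))=-Y^*(T)$, so the Gronwall step runs backward on the nonpositive quantity.
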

\begin{proof}
The first part can be proved by the uniqueness of the viscosity solution by proposition \ref{prop 5.1}. Next we try to prove the second part.

Suppose $(t_0,\omega_0)\in[s,T]\times\Omega$ satisfy (\ref{eq5.5}). Without loss of generality, suppose $\mathbb{P}(\cdot|\mathcal{F}^s_{t_0})(\omega_0)$ is a probability measure. Consider the new probability space $(\Omega,\mathcal{F},\mathbb{P}(\cdot|\mathcal{F}^s_{t_0})(\omega_0))$, under this probability space $\tilde{W}(\cdot)=W(\cdot)-W(t_0)$ still is a standard Brownian motion defined on $[t_0,T]$. We denote $\{\mathcal{F}_t^{t_0}\}_{t\geq t_0}$ the natural filtration generated by $\tilde{W}(\cdot)$, then $u(\cdot)|_{[t_0,T]}$ is $\{\mathcal{F}_t^{t_0}\}$-adapted. Under the new probability space $(\Omega,\mathcal{F},\mathbb{P}(\cdot|\mathcal{F}^s_{t_0})(\omega_0))$ we reconsider the FBSMDDE (\ref{eq5.1}) and it becomes the following form:
\begin{eqnarray}\label{eq5.7}
\left\{\begin{aligned}
  dX^{t_0,x_0;u}(t)&=b(t,X^{t_0,x_0;u}(t),X_1^{t_0,x_0;u}(t),X_2^{t_0,x_0;u}(t),u(t))dt\\
                   &\quad+\sigma(t,X^{t_0,x_0;u}(t),X_1^{t_0,x_0;u}(t),X_2^{t_0,x_0;u}(t),u(t))d\tilde{W}(t),\ t\in[t_0,T],\\
 -dY^{t_0,x_0;u}(t)&=\big[a(t,X^{t_0,x_0;u}(t),X_1^{t_0,x_0;u}(t),X_2^{t_0,x_0;u}(t),u(t))+f(t)Y^{t_0,x_0;u}(t)\big]dt\\
                   &\quad-Z^{t_0,x_0;u}(t)d\tilde{W}(t),\ t\in[t_0,T],\\
   X^{t_0,x_0;u}(t)&=x_0(t),\ t\in[t_0-\delta,t_0],\\
   Y^{t_0,x_0;u}(T)&=\phi(X^{t_0,x_0;u}(T),X_1^{t_0,x_0;u}(T)),
\end{aligned}\right.
\end{eqnarray}
where $x_0(t):= X^{s,\varphi;u}(t)$, $X_1^{t_0,x_0;u}(t):=\int_{-\delta}^0e^{\lambda\tau}X^{t_0,x_0;u}(t+\tau)d\tau$, $X_2^{t_0,x_0;u}(t):=X^{t_0,x_0;u}(t-\delta)$. Denote $\mathbb{E}^{\omega_0}$ the corresponding expectation under the probability measure $\mathbb{P}(\cdot|\mathcal{F}^s_{t_0})(\omega_0)$. The original function space $L_{\mathcal{F}}^p,S_{\mathcal{F}}^p$ is changed to $L_{\mathcal{F}_{{\omega}_0}}^p,S_{\mathcal{F}_{{\omega}_0}}^p$ correspondingly. By Lemma \ref{lem2.2} and Lemma \ref{lem3.1}, we have the following estimate for FBSMDDE (\ref{eq5.7}):
\begin{equation}\label{eq5.8}\begin{aligned}
  &\mathbb{E}^{\omega_0}\bigg[\underset{r\in[t_0,T]}\sup\big[|X^{t_0,x_0;u}(r)|^p+|Y^{t_0,x_0;u}(r)|^p\big]+\Big(\int_{t_0}^T|Z^{t_0,x_0;u}(r)|^2dr\Big)^{\frac{p}{2}}\bigg]\\
  &\leq C[1+\underset{r\in[t_0-\delta,t_0]}\sup|x_0(r,\omega_0)|^p],\mbox{ for }p\geq 2.
\end{aligned}\end{equation}

Since $(\Theta(t_0,\omega_0),p(t_0,\omega_0),q(t_0,\omega_0),P(t_0,\omega_0))\in D_{t+,x}^{1,2,1,+}V(t_0,X^*(t_0,\omega_0),X_1^*(t_0,\omega_0))$, then by Lemma \ref{lem3.2} there exists a function $\varphi\in C^{1,2,1}((0,T)\times\mathbf{R}^2)$ such that
\begin{equation}\begin{aligned}\label{eq5.9}
   &V(t_0+h,X^*(t_0+h,\omega),X_1^*(t_0+h,\omega))-V(t_0,X^*(t_0,\omega_0),X_1^*(t_0,\omega_0))\\
   &\leq\varphi(t_0+h,X^*(t_0+h,\omega),X_1^*(t_0+h,\omega))-\varphi(t_0,X^*(t_0,\omega_0),X_1^*(t_0,\omega_0)),
\end{aligned}\end{equation}
and
\begin{equation*}\begin{aligned}
  &(\Theta(t_0,\omega_0),p(t_0,\omega_0),q(t_0,\omega_0),P(t_0,\omega_0))\\
  &=(\varphi_t(t_0,X^*(t_0,\omega_0),X_1^*(t_0,\omega_0)),\varphi_x(t_0,X^*(t_0,\omega_0),X_1^*(t_0,\omega_0)),\\
  &\qquad\varphi_{x_1}(t_0,X^*(t_0,\omega_0),X_1^*(t_0,\omega_0)),\varphi_{xx}(t_0,X^*(t_0,\omega_0),X_1^*(t_0,\omega_0))).
\end{aligned}\end{equation*}
Hence we have
{\footnotesize\begin{equation}\label{eq5.10}\begin{aligned}
      &\underset{h\rightarrow 0^+}\varlimsup\mathbb{E}^{\omega_0}\bigg[\frac{V(t_0+h,X^*(t_0+h,\omega),X_1^*(t_0+h,\omega))-V(t_0,X^*(t_0,\omega),X_1^*(t_0,\omega))}{h}\bigg]\\
     =&\underset{h\rightarrow 0^+}\varlimsup\mathbb{E}^{\omega_0}\bigg[\frac{V(t_0+h,X^*(t_0+h,\omega),X_1^*(t_0+h,\omega))-V(t_0,X^*(t_0,\omega_0),X_1^*(t_0,\omega_0))}{h}\bigg]\\
  \leq&\underset{h\rightarrow 0^+}\varlimsup\mathbb{E}^{\omega_0}\bigg[\frac{\int_{t_0}^{t_0+h}\big\{\varphi_t(r,X^*(r),X_1^*(r))+
       \varphi_x(r,X^*(r),X_1^*(r))b^*(r)+\frac{1}{2}\varphi_{xx}(r,X^*(r),X_1^*(r))\sigma^*(r)\sigma^*(r)\big\}dr}{h}\bigg]\\
      &+\underset{h\rightarrow 0^+}\varlimsup\mathbb{E}^{\omega_0}\bigg[\frac{\int_{t_0}^{t_0+h}\{\varphi_{x_1}(r,X^*(r),X_1^*(r))
       [X^*(r)-\lambda X_1^*(r)-e^{-\lambda\delta }X_2^*(r)]\big\}dr}{h}\bigg]\\
     =&\underset{h\rightarrow0^+}\varlimsup\mathbb{E}^{\omega_0}\bigg[\frac{\int_{t_0}^{t_0+h}\big\{\varphi_t(r,X^{t_0,x_0;u^*}(r),X_1^{t_0,x_0;u^*}(r))
       +\varphi_x(r,X^{t_0,x_0;u^*}(r),X_1^{t_0,x_0;u^*}(r))b^*(r)\big\}dr}{h}\bigg]\\
      &+\underset{h\rightarrow 0^+}\varlimsup\mathbb{E}^{\omega_0}\bigg[\frac{\int_{t_0}^{t_0+h}\{\varphi_{x_1}(r,X^{t_0,x_0;u^*}(r),X_1^{t_0,x_0;u^*}(r))
       [X^{t_0,x_0;u^*}(r)-\lambda X_1^{t_0,x_0;u^*}(r)-e^{-\lambda\delta }X_2^{t_0,x_0;u^*}(r)]\big\}dr}{h}\bigg]\\
      &+\underset{h\rightarrow 0^+}\varlimsup \mathbb{E}^{\omega_0}\bigg[\frac{\int_{t_0}^{t_0+h}\{\frac{1}{2}\varphi_{xx}(r,X^{t_0,x_0;u^*}(r),X_1^{t_0,x_0;u^*}(r))
       \sigma^*(r)\sigma^*(r)\big\}dr}{h}\bigg],
\end{aligned}\end{equation}}
where
\begin{eqnarray*}\left\{\begin{aligned}
       b^*(r)&=b(r,X^{t_0,x_0;u^*}(r),X_1^{t_0,x_0;u^*}(r),X_2^{t_0,x_0;u^*}(r),u^*(r)),\\
  \sigma^*(r)&=\sigma(r,X^{t_0,x_0;u^*}(r),X_1^{t_0,x_0;u^*}(r),X_2^{t_0,x_0;u^*}(r),u^*(r)).
\end{aligned}\right.\end{eqnarray*}

We separately consider four items of the last equality as follows.

By Lemma \ref{lem3.2}, $\varphi,\varphi_t,\varphi_x,\varphi_{x_1},\varphi_{xx}$ has polynomial growth, thus $\varphi_t(\cdot,X^{t_0,x_0;u^*}(\cdot),X_1^{t_0,x_0;u^*}(\cdot))$, $\varphi_x(\cdot,X^{t_0,x_0;u^*}(\cdot),X_1^{t_0,x_0;u^*}(\cdot))b^*(\cdot)$, $\frac{1}{2}\varphi_{xx}(\cdot,X^{t_0,x_0;u^*}(\cdot),X_1^{t_0,x_0;u^*}(\cdot))\sigma^*(\cdot)\sigma^*(\cdot)\in L_{\mathcal{F}_{\omega_0}}^1(t_0,T;\mathbf{R})$, $\sigma^*(\cdot)\varphi_x(\cdot,X^{t_0,x_0;u^*}(\cdot),X_1^{t_0,x_0;u^*}(\cdot))$ $\in L_{\mathcal{F}_{\omega_0}}^2(t_0,T;\mathbf{R})$. Hence, by the continuity of $X^{t_0,x_0;u^*}$ and $\varphi_t$, we have
\begin{equation}\label{eq5.11}
  \frac{1}{h}\int_{t_0}^{t_0+h}\varphi_t(r,X^{t_0,x_0;u^*}(r),X_1^{t_0,x_0;u^*}(r))dr\rightarrow\varphi_t(t_0,X^{t_0,x_0;u^*}(t_0),X_1^{t_0,x_0;u^*}(t_0)),\
\end{equation}
as $h\downarrow0,\ \mathbb{P}(\cdot|\mathcal{F}_{t_0}^s)(\omega)$-$a.s.$ Consider that $\varphi_t$ has polynomial growth, and applying the dominated convergence theorem we get
\begin{equation}\begin{aligned}\label{eq5.12}
  &\underset{h\rightarrow0^+}\lim \mathbb{E}^{\omega_0}\bigg[\frac{1}{h}\int_{t_0}^{t_0+h}\varphi_t(r,X^{t_0,x_0;u^*}(r),X_1^{t_0,x_0;u^*}(r))dr\bigg]\\
  &=\mathbb{E}^{\omega_0}\big[\varphi_t(t_0,X^{t_0,x_0;u^*}(t_0),X_1^{t_0,x_0;u^*}(t_0))\big].
\end{aligned}\end{equation}

Next, we deal with the second term. We first have
\begin{equation}\label{eq5.13}\begin{aligned}
  &\mathbb{E}^{\omega_0}\bigg[\bigg|\frac{1}{h}\int_{t_0}^{t_0+h}\varphi_x(r,X^{t_0,x_0;u^*}(r),X_1^{t_0,x_0;u^*}(r))b^*(r)dr\\
  &\qquad\qquad\qquad-\varphi_x(t_0,X^{t_0,x_0;u^*}(t_0),X_1^{t_0,x_0;u^*}(t_0))b^*(t_0)\bigg|\bigg]\\
  &\leq\mathbb{E}^{\omega_0}\bigg[\frac{1}{h}\int_{t_0}^{t_0+h}\big| \varphi_x(r,X^{t_0,x_0;u^*}(r),X_1^{t_0,x_0;u^*}(r))\\
  &\qquad\qquad\qquad\quad-\varphi_x(t_0,X^{t_0,x_0;u^*}(t_0),X_1^{t_0,x_0;u^*}(t_0))\big|\big|b^*(r)\big| dr\\
  &\qquad\quad+\frac{1}{h}\int_{t_0}^{t_0+h}\big| \varphi_x(t_0,X^{t_0,x_0;u^*}(t_0),X_1^{t_0,x_0;u^*}(t_0))\big|\big|b^*(r)-b^*(t_0)\big| dr\bigg]\\
  &\leq\bigg(\frac{1}{h}\mathbb{E}^{\omega_0}\bigg[\int_{t_0}^{t_0+h}\big| \varphi_x(r,X^{t_0,x_0;u^*}(r),X_1^{t_0,x_0;u^*}(r))\\
  &\qquad\qquad\quad-\varphi_x(t_0,X^{t_0,x_0;u^*}(t_0),X_1^{t_0,x_0;u^*}(t_0))\big|^2dr\bigg]\bigg)^\frac{1}{2}
   \bigg(\frac{1}{h}\mathbb{E}^{\omega_0}\bigg[\int_{t_0}^{t_0+h}\big|b^*(r)\big|^2dr\bigg]\bigg)^\frac{1}{2}\\
  &\quad+\big|\varphi_x(t_0,X^{t_0,x_0;u^*}(t_0),X_1^{t_0,x_0;u^*}(t_0))\big|\mathbb{E}^{\omega_0}\bigg[\frac{1}{h}\int_{t_0}^{t_0+h}\big|b^*(r)-b^*(t_0)\big|dr\bigg].
\end{aligned}\end{equation}
Suppose $t_0$ is exactly chosen satisfying
\begin{equation*}
  \underset{h\rightarrow 0^+}\lim\mathbb{E}\bigg[\frac{1}{h}\int_{t_0}^{t_0+h}|b^*(r)-b^*(t_0)|dr\bigg]=0,
\end{equation*}
then we get
\begin{equation*}
  \underset{h\rightarrow 0^+}\lim\mathbb{E}\bigg[\mathbb{E}^{\omega_0}\big[\frac{1}{h}\int_{t_0}^{t_0+h}|b^*(r)-b^*(t_0)|dr\big]\bigg]=0,
\end{equation*}
hence there exists a subsequence $h_l$ such that
\begin{equation}\label{eq5.14}\begin{aligned}
\mathbb{E}^{\omega_0}\bigg[\frac{1}{h_l}\int_{t_0}^{t_0+h_l}\big|b^*(r)-b^*(t_0)\big|dr\bigg]\rightarrow 0,\ as\ h_l\downarrow 0,\ \mathbb{P}\mbox{-}a.s.\ \omega_0.
\end{aligned}\end{equation}

On the other hand, by the estimate (\ref{eq5.8}) we have
\begin{equation}\label{eq5.15}\begin{aligned}
  &\bigg(\frac{1}{h}\mathbb{E}^{\omega_0}\bigg[\int_{t_0}^{t_0+h}\big|b^*(r)\big|^2dr\bigg]\bigg)^\frac{1}{2}
   \leq C\bigg(\frac{1}{h}\mathbb{E}^{\omega_0}\bigg[\int_{t_0}^{t_0+h}\big[1+|X^{t_0,x_0;u^*}(r)|\\
  &\qquad+|X_1^{t_0,x_0;u^*}(r)|+|X_2^{t_0,x_0;u^*}(r)|\big]^2dr\bigg]\bigg)^\frac{1}{2}<+\infty.
\end{aligned}\end{equation}
Hence by the continuity of $\varphi_x$, we have
\begin{equation}\label{eq5.16}\begin{aligned}
  &\bigg(\frac{1}{h}\mathbb{E}^{\omega_0}\bigg[\int_{t_0}^{t_0+h}\big| \varphi_x(r,X^{t_0,x_0;u^*}(r),X_1^{t_0,x_0;u^*}(r))\\
  &\qquad-\varphi_x(t_0,X^{t_0,x_0;u^*}(t_0),X_1^{t_0,x_0;u^*}(t_0))\big|^2dr\bigg]\bigg)^\frac{1}{2}
   \bigg(\frac{1}{h}\mathbb{E}^{\omega_0}\bigg[\int_{t_0}^{t_0+h}\big|b^*(r)\big|^2dr\bigg]\bigg)^\frac{1}{2}\rightarrow 0,\mbox{ as }h\downarrow 0.
\end{aligned}\end{equation}
Thus, combining (\ref{eq5.14}) and (\ref{eq5.16}), we have the following estimate for the second term of (\ref{eq5.10}):
\begin{equation}\label{eq5.17}\begin{aligned}
  &\underset{h_l\rightarrow 0^+}\lim \mathbb{E}^{\omega_0}\bigg[\frac{1}{h_l}\int_{t_0}^{t_0+h_l}\varphi_x(r,X^{t_0,x_0;u^*}(r),X_1^{t_0,x_0;u^*}(r))b^*(r)dr\bigg]\\
  &=\mathbb{E}^{\omega_0}[\varphi_x(t_0,X^{t_0,x_0;u^*}(t_0),X_1^{t_0,x_0;u^*}(t_0))b^*(t_0)].
\end{aligned}\end{equation}

Similarly, employing the same argument for the last term of (\ref{eq5.10}).
Suppose $t_0$ is exactly chosen satisfying
\begin{equation*}
  \underset{h\rightarrow 0^+}\lim\mathbb{E}\bigg[\frac{1}{h}\int_{t_0}^{t_0+h}|\sigma^*(r)\sigma^*(r)-\sigma^*(t_0)\sigma^*(t_0)|dr\bigg]=0,
\end{equation*}
then there is a subsequence $h_{l'}$ so that
\begin{equation}\label{eq5.18}\begin{aligned}
 &\underset{h_{l'}\rightarrow 0^+}\lim\mathbb{E}^{\omega_0}\bigg[\frac{1}{h_{l'}}\int_{t_0}^{t_0+h_{l'}}\varphi_{xx}(r,X^{t_0,x_0;u^*}(r),X_1^{t_0,x_0;u^*}(r))\sigma^*(r)\sigma^*(r)dr\bigg]\\
&=\mathbb{E}^{\omega_0}[\varphi_{xx}(t_0,X^{t_0,x_0;u^*}(t_0),X_1^{t_0,x_0;u^*}(t_0))\sigma^*(t_0)\sigma^*(t_0)].
 \end{aligned}\end{equation}

Finally, by the continuity of $\varphi_{x_1}$, $X^{t_0,x_0;u^*}$, $X_1^{t_0,x_0;u^*}$ and $X_2^{t_0,x_0;u^*}$we derive
{\small\begin{equation}\label{eq5.19}\begin{aligned}
  &\underset{h\rightarrow 0^+}\varlimsup\mathbb{E}^{\omega_0}\bigg[\frac{\int_{t_0}^{t_0+h}\{\varphi_{x_1}(r,X^{t_0,x_0;u^*}(r),X_1^{t_0,x_0;u^*}(r))
   \big[X^{t_0,x_0;u^*}(r)-\lambda X_1^{t_0,x_0;u^*}(r)-e^{-\lambda\delta }X_2^{t_0,x_0;u^*}(r)\big]\big\}dr}{h}\bigg]\\
  &=\mathbb{E}^{\omega_0}\bigg[\varphi_{x_1}(t_0,X^{t_0,x_0;u^*}(t_0),X_1^{t_0,x_0;u^*}(t_0))\big[X^{t_0,x_0;u^*}(t_0)
   -\lambda X_1^{t_0,x_0;u^*}(t_0)-e^{-\lambda\delta }X_2^{t_0,x_0;u^*}(t_0)\big]\bigg].
\end{aligned}\end{equation}}

Summing up, we have proved that, for any sequence $h\rightarrow 0^+$, there exists a subsequence $h_{l''}$ so that
\begin{equation*}\begin{aligned}
  &\underset{h_{l''}\rightarrow 0^+}\lim\mathbb{E}^{\omega_0}\bigg[\frac{V(t_0+h_{l''},X^*(t_0+h_{l''},\omega),X_1^*(t_0+h_{l''},\omega))
   -V(t_0,X^*(t_0,\omega),X_1^*(t_0,\omega))}{h_{l''}}\bigg]\\
  &\hspace{-5mm}=\mathbb{E}^{\omega_0}\bigg[\varphi_t(t_0,X^{t_0,x_0;u^*}(t_0),X_1^{t_0,x_0;u^*}(t_0))+\varphi_x(t_0,X^{t_0,x_0;u^*}(t_0),X_1^{t_0,x_0;u^*}(t_0))b^*(t_0)\\
  &+\frac{1}{2}\varphi_{xx}(t_0,X^{t_0,x_0;u^*}(t_0),X_1^{t_0,x_0;u^*}(t_0))\sigma^*(t_0)\sigma^*(t_0)\\
  &+\varphi_{x_1}(t_0,X^{t_0,x_0;u^*}(t_0),X_1^{t_0,x_0;u^*}(t_0))\big[X^{t_0,x_0;u^*}(t_0)-\lambda X_1^{t_0,x_0;u^*}(t_0)-e^{-\lambda\delta }X_2^{t_0,x_0;u^*}(t_0)\big]\bigg].
\end{aligned}\end{equation*}
Hence by Lemma \ref{lem3.2}, we give
\begin{equation}\label{eq5.20}\begin{aligned}
    &\underset{h\rightarrow 0^+}\varlimsup\mathbb{E}^{\omega_0}\bigg[\frac{V(t_0+h,X^*(t_0+h,\omega),X_1^*(t_0+h,\omega))-V(t_0,X^*(t_0,\omega),X_1^*(t_0,\omega))}{h}\bigg]\\
\leq&\ \mathbb{E}^{\omega_0}\bigg[\varphi_t(t_0,X^{t_0,x_0;u^*}(t_0),X_1^{t_0,x_0;u^*}(t_0))+\varphi_x(t_0,X^{t_0,x_0;u^*}(t_0),X_1^{t_0,x_0;u^*}(t_0))b^*(t_0)\\
    &+\frac{1}{2}\varphi_{xx}(t_0,X^{t_0,x_0;u^*}(t_0),X_1^{t_0,x_0;u^*}(t_0))\sigma^*(t_0)\sigma^*(t_0)\\
    &+\varphi_{x_1}(t_0,X^{t_0,x_0;u^*}(t_0),X_1^{t_0,x_0;u^*}(t_0))\big[X^{t_0,x_0;u^*}(t_0)-\lambda X_1^{t_0,x_0;u^*}(t_0)-e^{-\lambda\delta }X_2^{t_0,x_0;u^*}(t_0)\big]\bigg]\\
   =&\ \Theta(t_0,\omega_0)+p(t_0,\omega_0)b^*(t_0)+\frac{1}{2}P(t_0,\omega_0)\sigma^*(t_0)\sigma^*(t_0)\\
    &\ +q(t_0,\omega_0)[X^{t_0,x_0;u^*}(t_0)-\lambda X_1^{t_0,x_0;u^*}(t_0)-e^{-\lambda\delta }X_2^{t_0,x_0;u^*}(t_0)].
\end{aligned}\end{equation}
By Fatou's lemma, we have
\begin{equation}\label{eq5.21}\begin{aligned}
    &\underset{h\rightarrow 0^+}\varlimsup \mathbb{E}\bigg[\frac{V(t_0+h,X^*(t_0+h),X_1^*(t_0+h))-V(t_0,X^*(t_0),X_1^*(t_0))}{h}\bigg]\\
\leq&\ \mathbb{E}\bigg\{\underset{h\rightarrow 0^+}\varlimsup \mathbb{E}^{\omega_0}\bigg[\frac{V(t_0+h,X^*(t_0+h),X_1^*(t_0+h))-V(t_0,X^*(t_0),X_1^*(t_0))}{h}\bigg]\bigg\}\\
\leq&\ \mathbb{E}\Big[\Theta(t_0)+ p(t_0)b^*(t_0)+\frac{1}{2}P(t_0)\sigma^*(t_0)\sigma^*(t_0)+q(t_0)[X^{t_0,x_0;u^*}(t_0)\\
    &\qquad-\lambda X_1^{t_0,x_0;u^*}(t_0)-e^{-\lambda\delta }X_2^{t_0,x_0;u^*}(t_0)]\Big].
\end{aligned}\end{equation}

Next, we want to find a function $\rho\in L^1(0,T;\mathbf{R})$ and some $h_0>0$ such that
\begin{equation}\label{eq5.22}
  \frac{\mathbb{E}\big[V(t+h,X^*(t+h),X_1^*(t+h))-V(t,X^*(t),X_1^*(t))\big]}{h}\leq\rho(t),\ a.e.\ t\in[0,T],\ h\leq h_0.
\end{equation}
Choose $h_0$ enough small, we have
{\footnotesize\begin{equation*}\begin{aligned}
    &\frac{\mathbb{E}[V(t_0+h,X^*(t_0+h),X_1^*(t_0+h))-V(t_0,X^*(t_0),X_1^*(t_0))]}{h}\\
   =&\ \frac{\mathbb{E}\big[\mathbb{E}^{\omega_0}[V(t_0+h,X^*(t_0+h),X_1^*(t_0+h))-V(t_0,X^*(t_0),X_1^*(t_0))]\big]}{h}\\
\leq&\ \frac{\mathbb{E}\big[\mathbb{E}^{\omega_0}[\varphi(t_0+h,X^*(t_0+h),X_1^*(t_0+h))-\varphi(t_0,X^*(t_0),X_1^*(t_0))]\big]}{h}\\
   =&\ \frac{\mathbb{E}\big[\mathbb{E}^{\omega_0}[\int_{t_0}^{t_0+h}\big\{\varphi_t(r,X^{t_0,x_0;u^*}(r),X_1^{t_0,x_0;u^*}(r))
     +\varphi_x(r,X^{t_0,x_0;u^*}(r),X_1^{t_0,x_0;u^*}(r))b^*(r)\big\}dr]\big]}{h}\\
    &\ +\frac{\mathbb{E}\big[\mathbb{E}^{\omega_0}[\int_{t_0}^{t_0+h}\{\varphi_{x_1}(r,X^{t_0,x_0;u^*}(r),X_1^{t_0,x_0;u^*}(r))[X^{t_0,x_0;u^*}(r)
     -\lambda X_1^{t_0,x_0;u^*}(r)-e^{-\lambda\delta }X_2^{t_0,x_0;u^*}(r)]\big\}dr]\big]}{h}\\
    &\ +\frac{\mathbb{E}\big[\mathbb{E}^{\omega_0}[\int_{t_0}^{t_0+h}\{\frac{1}{2}\varphi_{xx}(r,X^{t_0,x_0;u^*}(r),X_1^{t_0,x_0;u^*}(r))\sigma^*(r)\sigma^*(r)\big\}dr]\big]}{h}\\
\leq&\ \frac{C}{h}\mathbb{E}\bigg\{\mathbb{E}^{\omega_0}\bigg[\int_{t_0}^{t_0+h}\big[1+|X^{t_0,x_0;u^*}(r)|^k+|X_1^{t_0,x_0;u^*}(r)|^k\big]\big[1+|X^{t_0,x_0;u^*}(r)|^2\\
    &\qquad+|X_1^{t_0,x_0;u^*}(r)|^2+|X_2^{t_0,x_0;u^*}(r)|^2\big]dr\bigg]\bigg\}\\
\end{aligned}\end{equation*}}
{\small\begin{equation*}\begin{aligned}
\leq&\ \frac{C}{h}\mathbb{E}\bigg\{\mathbb{E}^{\omega_0}\bigg[\int_{t_0}^{t_0+h}\big[1+|X^{t_0,x_0;u^*}(r)|^{4+2k}+|X_1^{t_0,x_0;u^*}(r)|^{4+2k}
     +|X_2^{t_0,x_0;u^*}(r)|^4\big]dr\bigg]\bigg\}\\
\leq&\ C\mathbb{E}\bigg\{\mathbb{E}^{\omega_0}\bigg[1+\underset{t_0\leq r\leq T}\sup|X^{t_0,x_0;u^*}(r)|^{4+2k}+\underset{t_0-\delta\leq r\leq t_0}\sup|x_0(r)|^{4+2k}\bigg]\bigg\}\\
\leq&\ C\mathbb{E}\bigg[1+\underset{t_0-\delta\leq r\leq t_0}\sup|x_0(r)|^{4+2k}\bigg]\leq C\mathbb{E}\bigg[1+\underset{-\delta\leq r\leq 0}\sup|\varphi(r)|^{4+2k}\bigg].
\end{aligned}\end{equation*}}
Choose $\rho(t_0)=C\mathbb{E}\Big[1+\underset{-\delta\leq r\leq 0}\sup|\varphi(r)|^{4+2k}\Big]$, since $t_0$ is of full measure in $[0,T]$, (\ref{eq5.22}) is proved.

Now, we apply Lemma \ref{lem5.1} to the function $\lambda(t)=\mathbb{E}\big[V(t,X^*(t),X_1^*(t))\big]$, using Fatou's lemma and (\ref{eq5.21}) to get
\begin{equation}\label{eq5.23}\begin{aligned}
  &\mathbb{E}[V(T,X^*(T),X_1^*(T))]-V(s,x,x_1)\\
  &\leq\int_s^T\underset{h\rightarrow 0^+}\varlimsup\mathbb{E}\bigg[\frac{V(t+h,X^*(t+h),X_1^*(t+h))-V(t,X^*(t),X_1^*(t))}{h}\bigg]dt\\
  &\leq\int_s^T\mathbb{E}\bigg[\Theta(t)+ p(t)b^*(t)+\frac{1}{2}P(t)\sigma^*(t)\sigma^*(t)+q(t)[X^*(t)-\lambda X_1^*(t)-e^{-\lambda\delta }X_2^*(t)]\bigg]dt.
\end{aligned}\end{equation}
Consequently, by (\ref{eq5.6}) we have
\begin{equation}\label{eq5.24}\begin{aligned}
   &\mathbb{E}[V(T,X^*(T),X_1^*(T))]-V(s,x,x_1)\\
   &\leq \mathbb{E}\int_s^T\big[a(t,X^*(t),X_1^*(t),X_2^*(t),u^*(t))-f(t)V(t,X^*(t),X_1^*(t))\big]dt\\
   &\leq \mathbb{E}\int_s^T\big[a(t,X^*(t),X_1^*(t),X_2^*(t),u^*(t))+f(t)Y^*(t)\\
   &\qquad\qquad+C|Y^*(t)+V(t,X^*(t),X_1^*(t))|\big]dt.
\end{aligned}\end{equation}
Noting $Y^*(s)$ is deterministic, thus
\begin{equation*}
  Y^*(s)=\mathbb{E}\bigg[-V(T,X^*(T),X_1^*(T))+\int_s^T\big[a(t,X^*(t),X_1^*(t),X_2^*(t),u^*(t))+f(t)Y^*(t)\big]dt\bigg],
\end{equation*}
hence we obtain
\begin{equation}\label{eq5.25}\begin{aligned}
      &V(s,x,x_1)\\
  \geq&\ \mathbb{E}\bigg[V(T,X^*(T),X_1^*(T))-\int_s^T\big[a(t,X^*(t),X_1^*(t),X_2^*(t),u^*(t))+f(t)Y^*(t)\big]dt\\
      &\quad-C\int_s^T|Y^*(t)+V(t,X^*(t),X_1^*(t))|dt\bigg]\\
  =&-Y^*(s)-C\mathbb{E}\bigg[\int_s^T|Y^*(t)+V(t,X^*(t),X_1^*(t))|dt\bigg]\\
  =&-Y^*(s)+C\mathbb{E}\bigg[\int_s^T(Y^*(t)+V(t,X^*(t),X_1^*(t)))dt\bigg].
\end{aligned}\end{equation}
Finally, by the backward Gronwall's inequality we get
\begin{equation}\label{eq5.26}
  V(s,x,x_1)\geq-Y^*(s)=J(s,\varphi;u^*(\cdot)).
\end{equation}
Combining (\ref{eq5.26}) with (\ref{eq5.4}) shows that $u^*(\cdot)$ is an optimal control.
\end{proof}

\begin{Corollary} Suppose in Theorem \ref{thm5.1}, $g(\cdot)$ is any given uniformly bounded deterministic function, then the corresponding HJB equation becomes
(\ref{prop 5.1}). In this case we can still obtain the same conclusion as Theorem \ref{thm5.1}.
\end{Corollary}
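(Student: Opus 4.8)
The plan is to obtain the corollary from Theorem \ref{thm5.1} by a Girsanov transformation that trades the $g(\cdot)Z(\cdot)$ term in the backward equation for a shift of the forward drift. Since $g(\cdot)$ is uniformly bounded and deterministic, $\mathcal{E}(t):=\exp\big(\int_s^t g(r)\,dW(r)-\frac12\int_s^t|g(r)|^2\,dr\big)$ is a strictly positive $\{\mathcal{F}_t^s\}$-martingale, so $d\tilde{\mathbb{P}}:=\mathcal{E}(T)\,d\mathbb{P}$ defines a probability measure equivalent to $\mathbb{P}$, and $\tilde{W}(t):=W(t)-\int_s^t g(r)\,dr$ is a $\tilde{\mathbb{P}}$-Brownian motion whose augmented natural filtration is again $\{\mathcal{F}_t^s\}$. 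In terms of $\tilde{W}$, system (\ref{eq5.1}) becomes $dX=(b+\sigma g)(t,X,X_1,X_2,u)\,dt+\sigma(t,X,X_1,X_2,u)\,d\tilde{W}$ and $-dY=[a(t,X,X_1,X_2,u)+f(t)Y]\,dt-Z\,d\tilde{W}$, which is precisely the $g\equiv0$ setting of Theorem \ref{thm5.1} with $b$ replaced by $\hat{b}:=b+\sigma g$. Because $\sigma$ satisfies \textbf{(H1)} and $g(\cdot)$ is bounded, $\hat{b}$ again satisfies \textbf{(H1)}; the control class $\mathcal{U}^\omega[s,T]$ is unchanged (it depends only on adaptedness to $\{\mathcal{F}_t^s\}$ and the $\mathbb{P}$-null sets, which coincide with the $\tilde{\mathbb{P}}$-null sets); and the value function $V$ is unchanged because $Y^{s,\varphi;u}(s)$ is $\mathcal{F}_s^s$-measurable, hence the same real number under $\mathbb{P}$ and $\tilde{\mathbb{P}}$. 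Consequently the generalized Hamiltonian attached to the transformed problem is exactly $\bar{G}$ of (\ref{eq5.3}), and the first assertion follows from Proposition \ref{prop 5.1} just as the first part of the proof of Theorem \ref{thm5.1}.

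For the second assertion I would re-run the proof of Theorem \ref{thm5.1} on $(\Omega,\mathcal{F},\tilde{\mathbb{P}})$ with driving noise $\tilde{W}$ and drift $\hat{b}$. The super-jet hypothesis (\ref{eq5.5}) is unaffected by the change of measure, since the second-order super-jet of $V$ along the (unchanged) paths $(X^*(\cdot),X_1^*(\cdot))$ is a pathwise notion; the Hamiltonian in (\ref{eq5.6}) is now $\bar{G}$. Carrying out the estimate (\ref{eq5.10})--(\ref{eq5.21}), the It\^o expansion of the test function along $dX^*=\hat{b}^*\,dt+\sigma^*\,d\tilde{W}$ produces the additional term $\varphi_x\sigma^*g$ in the bound (\ref{eq5.20})--(\ref{eq5.21}); this is exactly the extra $p\sigma g$ that $\bar{G}$ carries relative to $\tilde{G}$, so it enters both the analogue of (\ref{eq5.23}) and, through (\ref{eq5.6}), its right-hand side, and the resulting bound used in (\ref{eq5.24}) is reached exactly as in the $g\equiv0$ case. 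Finally, in the concluding step the identity $Y^*(s)=\tilde{\mathbb{E}}\big[-V(T,X^*(T),X_1^*(T))+\int_s^T(a^*+fY^*)\,dr\big]$ of (\ref{eq5.25}) holds with \emph{no} $Z^*$ term, because the backward equation has no $Z$-in-drift term under $\tilde{\mathbb{P}}$; so the backward Gronwall argument closes as before and yields $V(s,x,x_1)\ge-Y^*(s)=J(s,\varphi;u^*(\cdot))$, which together with (\ref{eq5.4}) gives optimality of $(X^*(\cdot),Y^*(\cdot),Z^*(\cdot),u^*(\cdot))$.

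The main obstacle is the bookkeeping of the change of measure rather than any new estimate. Two points require care: first, the regular-conditional-probability construction of (\ref{eq5.7})--(\ref{eq5.21}) must be carried out on $(\Omega,\mathcal{F},\tilde{\mathbb{P}})$; second, the expectation that the reduction attaches to (\ref{eq5.6}) is $\tilde{\mathbb{E}}$, so either (\ref{eq5.6}) is stated under $\tilde{\mathbb{P}}$ from the outset, or --- keeping $\mathbb{E}$ --- one carries the density $\mathcal{E}(T)$ through the final inequalities. An equivalent route avoiding a new probability space is the ``discounting'' device: work with $\mathcal{E}(\cdot)Y^*(\cdot)$ and $\mathcal{E}(\cdot)V(\cdot,X^*(\cdot),X_1^*(\cdot))$, whose dynamics contain no $Z$-in-drift term; this turns every expectation in (\ref{eq5.20})--(\ref{eq5.26}) into an $\mathcal{E}$-weighted one but is otherwise mechanical.
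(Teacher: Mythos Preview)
Your approach is essentially the same as the paper's: both apply Girsanov's theorem to absorb the $g(\cdot)Z(\cdot)$ term into a shifted drift $\hat{b}=b+\sigma g$ under an equivalent measure, reducing to the $g\equiv 0$ case of Theorem~\ref{thm5.1}. The paper's proof is in fact considerably terser than yours---it simply rewrites the system, defines the new measure, restates the problem, and invokes Theorem~\ref{thm5.1}---so your additional bookkeeping remarks about (\ref{eq5.6}) and the regular conditional probabilities are refinements rather than departures.
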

\begin{proof}
Apply Girsanov's theorem, we can rewrite (\ref{eq5.1}) as
\begin{eqnarray}\label{eq5.27}\left\{\begin{aligned}
  dX^{s,\varphi;u}(t)=&\big[b(t,X^{s,\varphi;u}(t),X_1^{s,\varphi;u}(t),X_2^{s,\varphi;u}(t),u(t))\\
                      &\ +\sigma(t,X^{s,\varphi;u}(t),X_1^{s,\varphi;u}(t),X_2^{s,\varphi;u}(t),u(t))g(t)\big]dt\\
                      &+\sigma(t,X^{s,\varphi;u}(t),X_1^{s,\varphi;u}(t),X_2^{s,\varphi;u}(t),u(t))d\tilde{W}(t),\ t\in[s,T],\\
 -dY^{s,\varphi;u}(t)=&\big[a(t,X^{s,\varphi;u}(t),X_1^{s,\varphi;u}(t),X_2^{s,\varphi;u}(t),u(t))+f(t)Y^{s,\varphi;u}(t)\big]dt\\
                      &-Z^{s,\varphi;u}(t)d\tilde{W}(t),\ t\in[s,T],\\
   X^{s,\varphi;u}(t)=&\ \varphi(t-s),\ t\in[s-\delta,s],\\
   Y^{s,\varphi;u}(T)=&\ \phi(X^{s,\varphi;u}(T),X_1^{s,\varphi;u}(T)),
\end{aligned}\right.\end{eqnarray}
where $\tilde{W}(t):=W(t)-\int_s^tg(r)dr$. We define a new probability measure $\mathbb{Q}$ on $(\Omega,\mathcal{F}_t^s)$ by
\begin{equation*}
  \frac{d\mathbb{Q}}{d\mathbb{P}}\bigg|_{\mathcal{F}_t^s}=\exp\bigg\{\int_s^tg(r)dW(r)-\frac{1}{2}\int_s^tg^2(r)dr\bigg\}.
\end{equation*}
Apparently in the new probability measure space $(\Omega,\mathcal{F},\mathbb{Q})$, $\tilde{W}(\cdot)$ is a standard Brownian motion. Furthermore, the cost functional can be rewritten as
\begin{equation}\label{eq5.28}\begin{aligned}
  &J(s,\varphi;u(\cdot))=-Y^{s,\varphi;u}(s)|_{(\Omega,\mathcal{F},\mathbb{Q})}\\
  &=-\mathbb{E}_\mathbb{Q}\bigg[\int_s^Te^{\int_s^tf(r)dr}a(t,X^{s,\varphi;u}(t),X_1^{s,\varphi;u}(t),X_2^{s,\varphi;u}(t),u(t))dt\\
  &\qquad\qquad+e^{\int_s^Tf(r)dr}\phi(X^{s,\varphi;u}(T),X_1^{s,\varphi;u}(T))\bigg],
\end{aligned}\end{equation}
where $\mathbb{E}_\mathbb{Q}[\cdot]$ denotes the expectation under the probability measure $\mathbb{Q}$. Under the new probability space $(\Omega,\mathcal{F},\mathbb{Q})$, \textbf{Problem (P)} can be restated as the following.

\textbf{Problem (Q).} For given $(s,\varphi)\in[0,T]\times C([-\delta,0];\mathbf{R})$, the object is to find $u^*(\cdot)\in\mathcal{U}^\omega[s,T]$ such that (\ref{eq5.27}) is satisfied and (\ref{eq5.28}) is minimized.

Finally, by Theorem \ref{thm5.1}, the proof of the corollary is completed.
\end{proof}

\section{Concluding Remarks}

This paper is an extension of Shi et al. \cite{SXZ15}. With the help of the viscosity solution, we establish a nonsmooth version of the connection between the adjoint variables and the value function for stochastic recursive optimal control problem with mixed delay. The connection can be interpreted as two inclusions, the first one is between $\{p_1(t)(\gamma(t))^{-1}\}$ and $D^{1,+}_{x}V(t,X^*(t),X_1^*(t))$, the second one is between $\{p_1(t)(\gamma(t))^{-1}\}$ and $D^{1,-}_{x}V(t,X^*(t),X_1^*(t))$, which is consistent with the early results in Theorem 3.2 of \cite{SXZ15} when the value function is smooth. The connection between the adjoint variables and the value function can help look for the optimal control, thus we also give the verification theorem to verify if an admissible control is really optimal.

An interesting and challenging problem is to research the corresponding relationship with nonconvex domain, as Nie et al. \cite{NSW17}. In this case, a global maximum principle to generalize the result by Hu \cite{Hu17} to the time delayed case is necessary. We will consider this topic in the future.


\end{document}